\theoremstyle{plain}%environment name: in roman / text: in italics
\newtheorem{thm}{Theorem}[section]%[section] restarts the theorem counter at every new section.
\newtheorem{lem}[thm]{Lemma}%[thm]: Lemma environments use the same counter as theorem environments.
\newtheorem{cor}[thm]{Corollary}
\newtheorem{prop}[thm]{Proposition}
\theoremstyle{definition}%environment name: in roman / text: in roman
\newtheorem{df}[thm]{Definition}
\newtheorem{eg}[thm]{Example}
\theoremstyle{remark}%environment name: in italics / text: in roman
\newtheorem{rem}[thm]{Remark}
\numberwithin{equation}{section}%{section} restarts the equation counter at every new section.
\newcommand{\R}{\mathbb{R}}
\newcommand{\Z}{\mathbb{Z}}
\newcommand{\Ob}{\mathrm{Ob}}
\newcommand{\Mor}{\mathrm{Mor}}
\newcommand{\Cat}{{\sf Cat}}
\newcommand{\Hc}{{\sf H}}
\newcommand{\e}{\varepsilon}
\newcommand{\Hom}{{\sf Hom}}
\newcommand{\Fib}{{\sf Fib}}
\newcommand{\Tor}{{\sf Tors}}
\newcommand{\PMet}{{\sf PMet}}
\newcommand{\EPMet}{{\sf EPMet}}
\newcommand{\EQMet}{{\sf E\uppsi}{\sf Met}}
\newcommand{\Aut}{{\sf Aut}}
\newcommand{\Grph}{{\sf Grph}}
\newcommand{\wGrph}{{\sf wGrph}}
\newcommand{\MGrp}{{\sf MGrp}}
\newcommand{\EMGrp}{{\sf EMGrp}}
\newcommand{\NGrp}{{\sf NGrp}}
\newcommand{\G}{\mathcal{G}}
\renewcommand{\H}{\mathcal{H}}
\newcommand{\hF}{\widehat{F}}
\newcommand{\core }{{\sf core}}
\newcommand{\Met}{{\sf Met}}
\newcommand{\EMet}{{\sf EMet}}
\newcommand{\QMet}{{\sf \uppsi Met}}
\newcommand{\wt}{\widetilde}
\newcommand{\wh}{\widehat}
\newcommand{\too}{\longrightarrow}
\title{
Classification of metric fibrations
}
\author{Yasuhiko \textsc{Asao}\thanks{Department of Applied Mathematics, Fukuoka University \texttt{asao@fukuoka-u.ac.jp}}}
\date{\today}
\begin{document}
\maketitle
\begin{abstract}
 In this paper, we study a notion of `fibration for metric spaces', called {\it metric fibration}, that was originally introduced by Leinster \cite{L3} in the study of {\it magnitude}. He showed that the magnitude of a metric fibration splits into the product of those of the fiber and the base, which is analogous to the case for Euler characteristic and topological fiber bundles. His idea and our approach is based on Lawvere's suggestion of viewing a metric space as an enriched category \cite{La}. Actually, the metric fibrations are the restriction of the enriched {\it Grothendieck fibrations} \cite{Gr} to metric spaces \cite{A0}.  We give a complete classification of metric fibrations by several means, which are parallel to those used for topological fiber bundles. That is, the classification of metric fibrations is reduced to that of `principal fibrations', which is done by the `1-\v{C}ech cohomology' in an appropriate sense. Here we introduce the notion of {\it torsors} in the category of metric spaces, and the discussions are analogous to those in sheaf theory. Further, we can define the `fundamental group' $\pi^m_1(X)$ of a metric space $X$, which is a group-like object in metric spaces, such that the conjugation classes of homomorphisms $\Hom(\pi^m_1(X), \G)$ corresponds to the isomorphism classes of `principal $\G$-fibrations' over $X$. In other words, the latter are classified like topological covering spaces.
 
 %We remark that there does not appear the topology induced from the metric structure in the notion of metric fibrations and their classification. We only use metric structures.

\end{abstract}
%\tableofcontents

\section{Introduction}
% Viewing a small category as a space brighten up our days and make them  fruitful. Classifying spaces of groups, posets, etc... are part of its benefits. On the other hand, a metric space,  one of the models of a `space', itself can be viewed as a kind of a category, namely a category enriched over $[0, \infty]$, as Lawvere pointed out (\cite{La}). In \cite{A0}, the author suggested a unified view point for small categories and metric spaces, that is, they both are {\it categories enriched over filtered sets}. From this view point, we can expect to obtain a novel ideas for studying metric spaces that is well-studied for small categories, and vice versa.

The idea of {\it metric fibration} was first introduced by Leinster in the study of magnitude \cite{L3}. The magnitude theory that he coined can be considered as a promotion of Lawvere's suggestion of viewing a metric space as a $[0, \infty]$-enriched category. The magnitude of a metric space was defined as a special case of the `Euler characteristic of enriched categories'. In fact, he showed that the magnitude of a metric fibration splits into the product of those of the fiber and the base (Theorem 2.3.11 of \cite{L3}), which is analogous to the case of topological fiber bundles.   Later, the author \cite{A0} pointed out that metric fibration can actually be seen as enriched {\it Grothendieck fibrations}, see  \cite{Gr}, when restricted to metric spaces. Here we deal with small categories and metric spaces from a unified viewpoint, namely as {\it filtered set enriched categories}. By this approach, we can expect to obtain novel ideas for the study of metric spaces by transferring well understood concepts in category theory, and vice versa. 

As an example, the following Figure $1$ is one of the simplest non-trivial metric fibrations. Note that we consider connected graphs as metric spaces by taking the shortest path metric (see also Proposition \ref{leftadjoint}). Both graphs are metric fibrations over the complete graph $K_3$ with fiber $K_2$ as shown in Example 5.29 of \cite{A0}. Further, they have the same magnitude as pointed out in Example 3.7 of \cite{L1}. In Proposition 5.30 of \cite{A0}, it is shown that the right one is the only non-trivial metric fibration over $K_3$ with fiber $K_2$. Here, `trivial' means that it is the cartesian product of graphs. On the other hand, any metric fibration over a four cycle graph $C_4$, or more generally an even cycle graph, is shown to be trivial in the same proposition. 
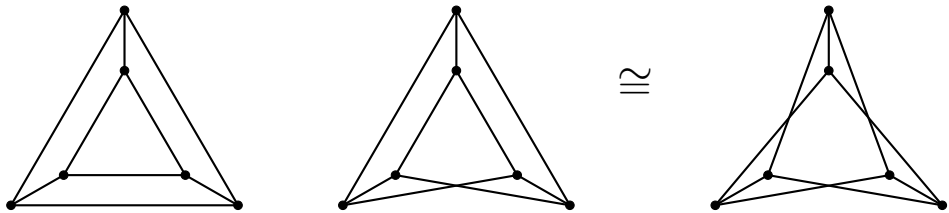
\begin{figure}[htbp]
\centering
\begin{tikzpicture}[scale=0.8]
\filldraw[fill=black, draw=black] (1, 1.732) circle (2pt) node[left] {} ;
\filldraw[fill=black, draw=black] (2, 0) circle (2pt) node[left] {} ;
\filldraw[fill=black, draw=black] (0, 0) circle (2pt) node[left] {} ;
\filldraw[fill=black, draw=black] (1, 2.732) circle (2pt) node[right] {} ;
\filldraw[fill=black, draw=black] (-0.866, -0.5) circle (2pt) node[right] {} ;
\filldraw[fill=black, draw=black] (2.866, -0.5) circle (2pt) node[left] {} ;

\draw[thick] (1, 1.732)--(2, 0);
\draw[thick] (1, 1.732)--(0, 0);
\draw[thick] (0, 0)--(2, 0);
\draw[thick] (0, 0)--(-0.866, -0.5);
\draw[thick] (1, 1.732)--(1, 2.732);
\draw[thick] (2, 0)--(2.866, -0.5);
\draw[thick] (2.866, -0.5)--(-0.866, -0.5);
\draw[thick] (1, 2.732)--(-0.866, -0.5);
\draw[thick] (1, 2.732)--(2.866, -0.5);

\end{tikzpicture}
\hspace{1cm}
\centering
\begin{tikzpicture}[scale=0.8]
\filldraw[fill=black, draw=black] (1, 1.732) circle (2pt) node[left] {} ;
\filldraw[fill=black, draw=black] (2, 0) circle (2pt) node[left] {} ;
\filldraw[fill=black, draw=black] (0, 0) circle (2pt) node[left] {} ;
\filldraw[fill=black, draw=black] (1, 2.732) circle (2pt) node[right] {} ;
\filldraw[fill=black, draw=black] (-0.866, -0.5) circle (2pt) node[right] {} ;
\filldraw[fill=black, draw=black] (2.866, -0.5) circle (2pt) node[left] {} ;
\filldraw[fill=white, draw=white] (4.4, 1.5) circle (2pt) node[left] {{\LARGE $\cong$}} ;

\draw[thick] (1, 1.732)--(2, 0);
\draw[thick] (1, 1.732)--(0, 0);
\draw[thick] (2, 0)--(-0.866, -0.5);
\draw[thick] (0, 0)--(-0.866, -0.5);
\draw[thick] (1, 1.732)--(1, 2.732);
\draw[thick] (2, 0)--(2.866, -0.5);
\draw[thick] (2.866, -0.5)--(0, 0);
\draw[thick] (1, 2.732)--(-0.866, -0.5);
\draw[thick] (1, 2.732)--(2.866, -0.5);

\end{tikzpicture}
\hspace{0.3cm}
\centering
\begin{tikzpicture}[scale=0.8]
\filldraw[fill=black, draw=black] (1, 1.732) circle (2pt) node[left] {} ;
\filldraw[fill=black, draw=black] (2, 0) circle (2pt) node[left] {} ;
\filldraw[fill=black, draw=black] (0, 0) circle (2pt) node[left] {} ;
\filldraw[fill=black, draw=black] (1, 2.732) circle (2pt) node[right] {} ;
\filldraw[fill=black, draw=black] (-0.866, -0.5) circle (2pt) node[right] {} ;
\filldraw[fill=black, draw=black] (2.866, -0.5) circle (2pt) node[left] {} ;

\draw[thick] (1, 1.732)--(2.866, -0.5);
\draw[thick] (1, 1.732)--(-0.866, -0.5);
\draw[thick] (2, 0)--(-0.866, -0.5);
\draw[thick] (0, 0)--(-0.866, -0.5);
\draw[thick] (1, 1.732)--(1, 2.732);
\draw[thick] (2, 0)--(2.866, -0.5);
\draw[thick] (2.866, -0.5)--(0, 0);
\draw[thick] (1, 2.732)--(0, 0);
\draw[thick] (1, 2.732)--(2, 0);

\end{tikzpicture}
\label{graphbdl}
\caption{The left is $K_3\times K_2$, and the right is isomorphic to $K_{3, 3}$. They both have magnitude equal to  $\frac{6}{1 + 3q + 2q^2}$.}
\end{figure}

In this paper, we give a complete classification of metric fibrations by several means, which are parallel to those used to classify topological fiber bundles. Namely, we define `principal fibrations', `fundamental groups' and `a $1$-\v{C}ech cohomology' for metric spaces, and obtain an equivalence between categories of these objects. Roughly speaking, we obtain an analogy of the following correspondence in the case of topological fiber bundles with a discrete structure group.

\begin{equation*}
\xymatrix{
\text{Fiber bundles over $X$ with structure group $G$} \ar@{<->}[d] \\
\text{Principal $G$-bundles over $X$ ($G$-torsors)} \ar@{<->}[d] \\
\ 
}
\end{equation*}
\vspace{-0.55cm}
\begin{equation*}
\xymatrix{
[X, BG] \cong \Hom(\pi_1(X), G)/{\sf conjugation} \ar@{<->}[d] \\
{\sf H}^1(X, G)
}
\end{equation*}

We give more details below. First recall that any Grothendieck fibration (in the usual sense) over a small category $C$ can be obtained from a lax functor $C \too \Cat$, by a procedure known as the {\it Grothendieck construction} \cite{Gr2}. In \cite{A0}, it is shown that any metric fibration over a metric space $X$ can be obtained from a `lax functor' $X \too \Met$ that is called {\it metric action} (Definition \ref{metacdef}). Here $\Met$ is the category of metric spaces and Lipschitz maps. Whereas metric fibrations can be defined by ‘a lifting property’ in a model categorical spirit, metric actions are best understood by ‘transformation functions’. The following shows that these two viewpoints are in fact equivalent.

\begin{thm}[Proposition \ref{metfib}]
For a metric space $X$, the Grothendieck construction gives an equivalence  of categories
\[
\Met_X \simeq \Fib_X,
\]
where we denote the category of metric actions $X \too \Met$ by $\Met_X$ and the category of metric fibrations over $X$ by $\Fib_X$ (Definitions \ref{metacdef}, \ref{metfibdef}). 
\end{thm}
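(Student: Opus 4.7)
My plan is to imitate the classical Grothendieck construction for $\Cat$-valued lax functors: I construct functors $\Phi \colon \Met_X \to \Fib_X$ and $\Psi \colon \Fib_X \to \Met_X$ and show they are mutually quasi-inverse. Morphism-level compatibility reduces to the object-level constructions once those are set up, so I would concentrate the bulk of the argument on objects.

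For $\Phi$, given a metric action $F \colon X \to \Met$ with transport maps $F_{x,y} \colon F(x) \to F(y)$, I put
\[
E_F = \bigsqcup_{x \in X} F(x), \qquad d_{E_F}\bigl((x,a),(y,b)\bigr) = d_X(x,y) + d_{F(y)}\bigl(F_{x,y}(a),b\bigr),
\]
with projection $\pi_F \colon E_F \to X$. Non-negativity, symmetry and the vanishing axiom should fall out of the normalisations $F_{x,x} = \mathrm{id}$ and $F_{y,x} = F_{x,y}^{-1}$ forced in the metric setting (this tightening is what distinguishes a metric action from a general $\Cat$-valued lax functor). The triangle inequality is the main geometric check and is precisely what the lax-functor structure of a metric action is designed to encode: it should amount to a bound on $d_{F(z)}\bigl(F_{y,z}F_{x,y}(a),\,F_{x,z}(a)\bigr)$ controlled by the triangle defect in $X$. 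Horizontal lifting is then tautological---the unique point of $\pi_F^{-1}(y)$ at distance $d_X(x,y)$ from $(x,a)$ is $(y, F_{x,y}(a))$---so $\pi_F$ is a metric fibration.

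For $\Psi$, given a metric fibration $\pi \colon E \to X$, I set $\Psi(\pi)(x) = \pi^{-1}(x)$ with the restricted metric and define each transport $\Psi(\pi)_{x,y}$ by unique horizontal lifting; the metric-action axioms come for free from uniqueness of lifts. The isomorphism $\Psi\Phi \cong \mathrm{id}$ is essentially tautological, and I expect the main obstacle to be $\Phi\Psi \cong \mathrm{id}$. This reduces to the statement that in any metric fibration every cross-fibre distance $d_E(a,b)$ decomposes as $d_X(\pi a,\pi b)$ plus the fibre distance from the horizontal lift of $a$ to $b$---equivalently, that no pair of points admits a shorter route through an intermediate fibre than the direct horizontal transport. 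This decomposition should be built into the definition of metric fibration recalled earlier in the paper, and once it is invoked the remaining naturality and functoriality checks are bookkeeping.
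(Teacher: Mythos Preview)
Your proposal is correct and matches the paper's approach: the paper builds the same pair of functors (Propositions~\ref{grofun} and~\ref{grofuninv}), checks $FE \cong \mathrm{id}_{\Met_X}$ tautologically, and reduces $EF \cong \mathrm{id}_{\Fib_X}$ to exactly the distance decomposition $d_E(\e,\e') = d_X(\pi\e,x) + d_E(\e_x,\e')$ you single out, which is indeed the second clause of Definition~\ref{metfibdef}. The only cosmetic difference is that the paper isolates the facts that lifting is an isometry between fibres and commutes with morphisms as standalone lemmas (Lemmas~\ref{liftfunct} and~\ref{morphcomm}) before assembling the equivalence.
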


We can define a category $\Tor_X^\G$ that consists of `principal $\G$-fibrations' (Definition \ref{tordef}). We call it the {\it category of} $\G$-{\it torsors}. We can also define a subcategory $\PMet_X^\G$ of $\Met_X$, that is the counterpart of $\Tor_X^\G$ (Definition \ref{pmetdef}). The objects of the category $\PMet_X^\G$ consist of a metric action $X \too \Met$ taking a group $\G$, not just a metric space, as the value. Then we have the following.

\begin{thm}[Proposition \ref{pmettor}]
For a ``metric group'' $\G$, the Grothendieck construction gives an  equivalence of categories 
\[
\PMet_X^\G \simeq \Tor_X^\G.
\]

\end{thm}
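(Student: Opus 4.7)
The plan is to deduce this equivalence as a restriction of the equivalence $\Met_X \simeq \Fib_X$ from the preceding theorem. That is, I would show that the Grothendieck construction sends objects and morphisms of $\PMet_X^\G$ to objects and morphisms of $\Tor_X^\G$, and that its quasi-inverse carries $\Tor_X^\G$ back into a subcategory naturally equivalent to $\PMet_X^\G$; once both assignments are known to be well-defined on the respective subcategories, the equivalence follows essentially for free from the ambient one, and it remains only to check that the unit and counit of the ambient adjoint equivalence restrict.

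For the forward direction, starting from a $\G$-valued metric action $F \colon X \to \Met$ in $\PMet_X^\G$, the Grothendieck construction produces a metric fibration whose fibers are all the underlying metric space of $\G$. The right regular action of $\G$ on itself extends to a fiberwise action on the total space; because the transition data of $F$ consists of left translations by elements of $\G$ (this being the defining feature of $\PMet_X^\G$), the right action commutes with the horizontal lifts and acts by isometries, while freeness and transitivity on each fiber are immediate from the group axioms. This gives an object of $\Tor_X^\G$, and morphisms of $\G$-actions in $\PMet_X^\G$ translate under the Grothendieck construction to fiberwise $\G$-equivariant isometries over $X$, i.e.\ to morphisms in $\Tor_X^\G$.

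For the converse direction, given a $\G$-torsor $p \colon E \to X$, the quasi-inverse of the Grothendieck construction produces a metric action $F$ with $F(x) = p^{-1}(x)$. Each fiber is a $\G$-torsor in the classical sense, hence non-canonically isometric to $\G$, and the task is to upgrade $F$ to a metric action taking values literally in $\G$. I would use the global $\G$-action on $E$ to rigidify the identifications $F(x) \cong \G$---for instance by fixing a basepoint in one fiber and propagating it through the combined effect of the horizontal lifts and the $\G$-action---and then verify that the resulting transition maps $F(x) \to F(y)$ are left translations by elements of $\G$, so that $F \in \PMet_X^\G$. The main obstacle is exactly this rigidification step: showing that the basepoint choices can be made coherently with the lax-functor data of $F$, so that what is a priori only a $\G$-torsor-valued metric action genuinely refines to a $\G$-valued one. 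This is the metric analogue of the classical fact that every principal $\G$-bundle admits trivialisations compatible with its principal structure, and once it is established the restriction of the equivalence $\Met_X \simeq \Fib_X$ to the subcategories $\PMet_X^\G$ and $\Tor_X^\G$ is automatic.
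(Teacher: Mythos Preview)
Your proposal is correct and follows essentially the same approach as the paper. The paper carries out exactly your plan: it first verifies that $E$ restricts to a functor $\PMet_X^\G \to \Tor_X^\G$ via the right regular action, then shows $F$ restricts to $\Tor_X^\G \to \PMet_X^\G$ by fixing a basepoint $\varepsilon \in \pi^{-1}x_0$, propagating it to each fiber as $\varepsilon_x$ via horizontal lifts, and computing that $(\varepsilon_x h)_{x'} = \varepsilon_{x'} g_{xx'} h$ so that the transition maps are indeed left multiplications; the final proposition then simply notes that the ambient natural isomorphisms $EF \cong \mathrm{id}$ and $FE \cong \mathrm{id}$ land in the subcategories.
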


Here, the group $\G$ is not just a group but is a group-like object in $\Met$, which we call a {\it metric group} (Definition \ref{metgrpdef}). As an example of a metric group, we construct the {\it fundamental group $\pi_1^m(X)$ of a metric space} $X$ (Definition \ref{pi1def}). We also define a category $\Hom(\pi_1^m(X), \G)$ of homomorphisms $\pi_1^m(X) \too  \G$, where a morphism between homomorphisms is defined as a conjugation relation (Definition \ref{homcat}). Then we have the following.

\begin{thm}[Proposition \ref{hompri}]
We have an  equivalence of categories 
\[
\Hom (\pi^m_1(X, x_0), \G) \simeq \PMet^{\G}_X. 
\]

\end{thm}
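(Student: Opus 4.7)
The plan is to model the proof on the classical covering-space dictionary: a principal fibration corresponds to its monodromy representation, and conversely a representation reconstructs the fibration via the `associated bundle' construction. Concretely, I would build mutually quasi-inverse functors
\[
\Phi\colon \PMet_X^{\G} \longrightarrow \Hom(\pi_1^m(X,x_0),\G),\qquad \Psi\colon \Hom(\pi_1^m(X,x_0),\G) \longrightarrow \PMet_X^{\G},
\]
and then check naturality of the isomorphisms $\Phi\Psi\cong \mathrm{id}$ and $\Psi\Phi\cong \mathrm{id}$.

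For $\Phi$, take a metric $\G$-action $F\colon X\to\Met$ whose transition data, for each ordered pair of points $(x,y)$, yields a distinguished element $F(x,y)\in\G$ satisfying the cocycle/triangle condition underlying the definition of metric action. A loop in $X$ based at $x_0$ is (by the construction of $\pi_1^m$) built from a finite sequence of points $x_0, x_1,\dots, x_n=x_0$, and one assigns to it the product $F(x_{n-1},x_n)\cdots F(x_0,x_1)\in\G$. I would check that the relations used to define $\pi_1^m(X,x_0)$ (the ones identifying combinatorially different closed walks that come from the axioms of a metric action) are exactly the relations forced on this assignment, so that the induced map $\pi_1^m(X,x_0)\to\G$ is a well-defined homomorphism of metric groups. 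A morphism $F\to F'$ in $\PMet_X^{\G}$ is essentially a family of group elements $g_x\in\G$ twisting $F$ into $F'$; at $x_0$ this gives an element $g_{x_0}$ which conjugates one monodromy homomorphism into the other, producing the required morphism in $\Hom(\pi_1^m(X,x_0),\G)$.

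For $\Psi$, given $\rho\colon\pi_1^m(X,x_0)\to\G$ I would choose, for every point $x\in X$, a formal `path' $\gamma_x$ from $x_0$ to $x$ (using connectedness of $X$, and $\gamma_{x_0}=\mathrm{const}$), and define the metric $\G$-action by
\[
F_\rho(x,y) := \rho\bigl(\gamma_y^{-1}\cdot (x\to y)\cdot \gamma_x\bigr).
\]
The cocycle identity for $F_\rho$ is immediate from $\rho$ being a homomorphism, and the Lipschitz/metric compatibility of $F_\rho$ follows from that of the generating transitions $(x\to y)$ in $\pi_1^m(X)$ together with the fact that $\rho$ lives in $\Met$. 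A different system of paths $\{\gamma_x'\}$ produces an isomorphic metric action via the family $g_x:=\rho(\gamma_x'\gamma_x^{-1})$, so $\Psi$ is well-defined up to canonical isomorphism; and a conjugation $g\cdot\rho\cdot g^{-1}$ corresponds to shifting all basepaths by $g$, giving functoriality.

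The routine verification that $\Phi\Psi\cong\mathrm{id}$ and $\Psi\Phi\cong\mathrm{id}$ then reduces to the tautologies that (i) the monodromy of $F_\rho$ along a loop $\gamma$ recovers $\rho(\gamma)$, and (ii) any metric $\G$-action is isomorphic, via the trivialization along chosen paths from $x_0$, to the one built from its own monodromy. The main technical obstacle will be the very first bullet: showing that the presentation of $\pi_1^m(X,x_0)$ from Definition \ref{pi1def} uses exactly the relations that are forced on the monodromy of a metric $\G$-action — in other words, that no additional identification is imposed by $\pi_1^m$ beyond those dictated by the axioms of a metric action, and vice versa. Once this matching of relations is confirmed, both functors are seen to be well-defined, and the compatibility with morphisms (conjugations versus pointwise $\G$-twists) is a direct computation at the basepoint.
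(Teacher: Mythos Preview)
Your proposal is correct and follows essentially the same strategy as the paper: its functor $B$ is your $\Phi$ (monodromy of the action along a loop, $\varphi_F[(x_0,x_1,\dots,x_n,x_0)]=F_{x_1x_0}\cdots F_{x_0x_n}$), and its functor $A$ is your $\Psi$. The one simplification you are missing is that no auxiliary choice of paths $\gamma_x$ is needed: because any two points of a metric space are already joined by the direct ``edge'' $(x_0,x)$, the paper simply sets $(F_\varphi)_{xx'}=\varphi[(x_0,x',x,x_0)]$, which makes $\Psi$ a strict functor rather than one defined only up to canonical isomorphism, and eliminates the verification that the construction is independent of the $\gamma_x$; the triangle estimate for $(F_\varphi)$ then reduces immediately to bounding $|\varphi[(x_0,x,x'',x',x,x_0)]|$ by the weight of a single edge in $G(X,x_0)$.
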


As a corollary, we reprove Proposition 5.30 of \cite{A0} in the following form. We note that the notion of a metric group is equivalent to that of a {\it `normed group'} (Proposition \ref{eroff}). For a metric group $\G$, we denote the corresponding norm of an element $g \in \G$ by $|g| \in \Z_{\geq 0}$. %Note that a cycle graph $C_n$ itself is a metric group $\Z/n$ with  $|1| = 1$ for $n \geq 2$. 

\begin{prop}[Proposition \ref{cyclegraph}]
Let $C_n$ be the undirected $n$-cycle graph. Then we have
\[
\pi^m_1(C_n) \cong \begin{cases}\Z \text{ with } |1| = 1 & n : \text{odd}, \\ 0 & n : \text{even}. \end{cases}
\]
Hence we have that $\PMet_{C_n}^{\G} \simeq \begin{cases}\Hom (\Z, \G)  & n : \text{odd}, \\ 0 & n : \text{even}, \end{cases}$ for all metric group $\G$, which implies that there is only a trivial metric fibration over $C_{2n}$ and that there is at most one non-trivial metric fibration over $C_{2n+1}$.
\end{prop}

 Now, similarly to the topological case, we can define an `associated bundle construction' from a torsor and a metric space $Y$ (Corollary \ref{changefiber}). This construction gives the following.

\begin{thm}[Corollary \ref{pmetcore}]
Suppose that $Y$ is a bounded metric space. Then we have an  equivalence of categories 
\[
\PMet_X^{\Aut Y} \simeq {\sf core}\Fib_X^Y,
\]
where $\Fib_X^Y$ is the full subcategory of $\Fib_X$ that consists of metric fibrations with fiber $Y$ (Definition \ref{fiby}), and we denote the {\it core} of a category by ${\sf core}$ (Definition \ref{catconv} (4)).
\end{thm}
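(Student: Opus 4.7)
The plan is to chain two equivalences. First, apply Proposition \ref{pmettor} to the metric group $\G = \Aut Y$; this group is well-defined as a metric group precisely because $Y$ is bounded, which makes the sup-metric on isometries of $Y$ finite. This gives $\PMet_X^{\Aut Y} \simeq \Tor_X^{\Aut Y}$. It therefore suffices to establish a category equivalence $\Tor_X^{\Aut Y} \simeq \core\Fib_X^Y$, which is the metric analogue of the classical correspondence between principal $G$-bundles and fiber bundles with structure group $G$.

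For the forward functor $\Tor_X^{\Aut Y} \too \core\Fib_X^Y$, I would use the associated bundle construction from Corollary \ref{changefiber}: a torsor $P \too X$ together with the tautological isometric action of $\Aut Y$ on $Y$ produces a metric fibration $P \times_{\Aut Y} Y \too X$ whose fibers are all isometric to $Y$. Since every morphism in $\Tor_X^{\Aut Y}$ is an isomorphism, the image of this functor lies in $\core\Fib_X^Y$.

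For the inverse direction, I would build a ``frame fibration'' functor $\core\Fib_X^Y \too \Tor_X^{\Aut Y}$. Given $\pi \colon E \too X$ in $\Fib_X^Y$, define at each $x \in X$ the set $\mathrm{Iso}(Y, E_x)$ of isometries from $Y$ to the fiber, and equip the disjoint union with a metric using the sup-distance on each fiber (finite by boundedness of $Y$) together with the unique horizontal lifts of $\pi$ to compare frames over different basepoints. The free transitive action of $\Aut Y$ on each fiber by precomposition makes this an $\Aut Y$-torsor over $X$ in the sense of Definition \ref{tordef}; equivalently, the associated metric action $X \too \Met$ is valued in $\Aut Y$, so the construction lands in $\PMet_X^{\Aut Y}$.

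It then remains to verify that these two constructions are mutually inverse up to natural isomorphism: applying the frame construction to an associated fibration $P \times_{\Aut Y} Y$ recovers $P$ via the map sending a frame $\phi \colon Y \too (P\times_{\Aut Y} Y)_x$ to the unique element of $P_x$ representing it, and applying the associated construction to a frame fibration recovers $E$ via the evaluation $[\phi, y] \mapsto \phi(y)$. Naturality in $X$ is formal. The main obstacle I anticipate is showing that the frame fibration is genuinely a metric fibration and a torsor: one must check that the sup-metric on $\mathrm{Iso}(Y, E)$ interacts correctly with the unique horizontal lifts supplied by $\pi$, and that the postcomposition $\Aut Y$-action yields a principal structure as in Definition \ref{tordef}. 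Both points rely essentially on $Y$ being bounded, without which the metrics on $\Aut Y$ and on the space of frames would fail to be finite.
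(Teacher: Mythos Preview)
Your approach is sound but takes a different route from the paper. You pass through $\Tor_X^{\Aut Y}$ via Proposition~\ref{pmettor} and then propose to establish $\Tor_X^{\Aut Y}\simeq\core\Fib_X^Y$ directly on the fibration side, using associated-bundle and frame-bundle constructions. The paper instead never leaves the metric-action side: it defines the almost tautological functor $-\curvearrowright Y:\PMet_X^{\Aut Y}\to\Met_X^Y$ (reinterpret an $\Aut Y$-valued action as an action on $Y$), shows that after composing with the normalizing equivalence $\wh{(-)}^{x_0}$ it is faithful, split essentially surjective, and full onto $\core\wh{\Met}_X^{Y,x_0}$ (this is Corollary~\ref{changefiber}), and then chains with $\core\wh{\Met}_X^{Y,x_0}\simeq\core\Met_X^Y\simeq\core\Fib_X^Y$ from Lemma~\ref{metfibpmet}. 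The essential-surjectivity step in the paper uses an ``$\Aut F$'' construction (fix isometries $\varphi_x:Y\to Fx$ and set $(\Aut F)_{xx'}=\varphi_{x'}^{-1}F_{xx'}\varphi_x$), which is exactly the metric-action avatar of your frame bundle, so the two arguments share the same underlying idea. The paper's route is more economical given the infrastructure already built, since working on the action side avoids having to verify that the total space of frames is itself a metric fibration and a $\G$-torsor in the sense of Definition~\ref{pfib}; your route has the merit of being the direct translation of the classical topological argument and of making the geometry explicit. One small point: what you cite as Corollary~\ref{changefiber} is formulated on the metric-action side, not as a construction $P\times_{\Aut Y}Y$ on torsors, so if you want to run your argument purely on the fibration side you would need to set up that quotient construction separately rather than invoke the corollary as stated.
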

Here, we equip the group $\Aut Y$ of isometries of $Y$ with a metric group structure by $d_{\Aut Y}(f, g) = \sup_{y \in Y}d_Y(fy, gy)$ (Example \ref{auty}). However, for this we should suppose that $Y$ is a bounded metric space so that $d_{\Aut Y}$ is indeed a distance function. For the case of general metric fibrations, we must extend our arguments to {\it extended metric groups}, allowing $\infty$ as the value of the distance function (Definition \ref{extmet}). For these we obtain an essentially similar but extended result (Proposition \ref{extpmetcore}). 

Finally, we define a `$1$-\v{C}ech cohomology category' $\Hc^1(X, \G)$ of a $\G$-torsor $X$ (Definition \ref{cech}). This is analogous to the \v{C}ech cohomology constructed from the local sections of a principal bundle. Similarly to the topological case, we can construct a cocycle from a family of local sections (Proposition \ref{alpha}), and conversely we can construct a $\G$-torsor by pasting copies of $\G$ along a cocycle (Proposition \ref{pasting}). From this correspondence we have the following:

\begin{thm}[Corollary \ref{betaeq}]
We have an  equivalence of categories 
\[
\Hc^1(X; \G) \simeq  \Tor^{\G}_X.
\]

\end{thm}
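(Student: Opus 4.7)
The plan is to exhibit a quasi-inverse pair of functors between $\Hc^1(X;\G)$ and $\Tor_X^\G$ by upgrading the two object-level constructions that have just been set up. Proposition \ref{pasting} packages the ``cocycle-to-torsor'' direction, and Proposition \ref{alpha} the ``torsor-to-cocycle'' direction; the corollary should follow by verifying that these assignments extend to functors and are mutually pseudo-inverse.

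First, I would define $\beta \colon \Hc^1(X;\G) \to \Tor_X^\G$ as the pasting construction of Proposition \ref{pasting}: an object, i.e.\ a cocycle, is sent to the torsor obtained by gluing copies of $\G$ along the prescribed transition data, and a morphism of cocycles (a $0$-cochain witnessing that two cocycles differ by a coboundary, in the sense of Definition \ref{cech}) is sent to the induced patchwise map of pasted torsors, whose global well-definedness is exactly the cocycle/coboundary identity. Functoriality is then a direct diagram chase.

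Next I would define $\alpha \colon \Tor_X^\G \to \Hc^1(X;\G)$ via Proposition \ref{alpha}: choose a family of local sections of a given $\G$-torsor $P \to X$, which exist by the local triviality built into the torsor axiom, and extract the transition cocycle on overlaps. A morphism $f \colon P \to P'$ sends the chosen sections of $P$ into $P'$, and comparing with the chosen sections of $P'$ patchwise produces the $0$-cochain exhibiting the two cocycles as cohomologous, i.e.\ a morphism in $\Hc^1(X;\G)$.

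The last step is to produce natural isomorphisms $\alpha\beta \cong \mathrm{id}$ and $\beta\alpha \cong \mathrm{id}$. For $\alpha\beta$, the torsor pasted from a cocycle $\{\alpha_{ij}\}$ carries tautological sections, namely the inclusions of the patchwise copies of $\G$, and the transition cocycle extracted from them is literally $\{\alpha_{ij}\}$. For $\beta\alpha$, given $P$ with chosen sections $s_i$, the rule ``send patch $i$ of $\beta\alpha(P)$ to the image of $s_i$ in $P$'' defines a torsor map $\beta\alpha(P) \to P$ whose inverse is supplied by the local trivializations around each $s_i$, and naturality in $P$ reduces to the definition of $\alpha$ on morphisms.

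The main obstacle I expect is the dependence of $\alpha$ on the choice of local sections: different choices yield cohomologous but not identical cocycles, so $\alpha$ is only well-defined up to a canonical natural isomorphism, and this is precisely what forces us to work with the category $\Hc^1(X;\G)$ rather than a quotient set and to phrase the result as an equivalence rather than an isomorphism. A subordinate but also nontrivial issue is that every construction must be carried out in $\Met$: the chosen sections, the pasting maps, and the comparison isomorphisms all need to respect the metric structure and the $\G$-action in the sense of Definition \ref{tordef}, which uses the metric group structure on $\G$ and the metric action interpretation of a torsor in an essential way.
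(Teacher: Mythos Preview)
Your strategy is sound and would work, but it differs from the paper's route in one structural respect. You propose to build a genuine quasi-inverse functor $\alpha$ out of Proposition~\ref{alpha} and then exhibit natural isomorphisms $\alpha\beta\cong\mathrm{id}$ and $\beta\alpha\cong\mathrm{id}$. The paper instead avoids ever promoting $\alpha$ to a functor: it shows directly that the single functor $\beta$ of Proposition~\ref{pasting} is faithful (Proposition~\ref{pasting}), full (the proposition immediately following), and split essentially surjective (Proposition~\ref{cohesssur}), and then invokes the standard lemma that a fully faithful, split essentially surjective functor is an equivalence. Proposition~\ref{alpha} is used only at the object level, to produce for each torsor a cocycle that $\beta$ maps back to it.

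What each approach buys: the paper's route sidesteps exactly the obstacle you flag as the main one, namely the choice-dependence of local sections. By never asking $\alpha$ to be a functor, there is no need to verify that the section choices are coherent under morphisms or that the resulting $0$-cochains compose correctly; fullness and essential surjectivity of $\beta$ absorb all of that. Your route is more explicit and would yield a concrete inverse, but it requires the extra bookkeeping you anticipate (global choice of sections for every torsor, checking functoriality of the induced $0$-cochains, and then naturality of both unit and counit). A minor caution: your sketch speaks of cocycles $\{\alpha_{ij}\}$ indexed by pairs and of ``overlaps'' and ``local triviality'', which is the topological \v{C}ech picture; in this paper a cocycle is a family $(a_{ijk})$ indexed by ordered triples of points (Definition~\ref{cech}), and a local section is a family of lift-compatible point pairs rather than a map on an open set, so the details of your $\alpha\beta$ and $\beta\alpha$ steps would need to be phrased in those terms.
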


\subsubsection*{Acknowledgements}
The author is grateful to Luigi Caputi and Emily Roff for fruitful and helpful comments and feedback on the first draft of the paper. In particular, Emily pointed out that the earlier version of the definition of metric groups is incorrect. She also checked author's poor English and gave many very helpful advice. The author also would like to thank Masahiko Yoshinaga and Shun Wakatsuki for valuable discussions and comments. Finally, he expresses his deep appreciation to anonymous referees and the editor for their careful reading and important suggestions.
\section{Conventions}
In this section, we review terms for categories, graphs, weighted graphs and metric spaces that are well-known but may not be commonly used. Those who are not familiar with the language of categories and functors may refer to \cite{Mc}. For categories and metric spaces, \cite{LS} may be a good reference.
\subsection{Categories}
In this article, we suppose that categories are locally small. We denote the object class of a category $C$ by $\Ob C$, and the set of all morphisms from $a$ to $b$ by $C(a, b)$ for each pair of objects $a, b \in \Ob C$. We denote the class of all morphisms in $C$ by $\Mor C$.
\begin{df}\label{catconv}
Let $C$ and $D$ be categories, and  $F : C \too D$ be a functor.
\begin{enumerate}
\item We say that $F$ is {\it faithful} if the map $F : C(a, b) \too D(Fa, Fb)$ is injective for all objects $a, b \in \Ob C$. We say that $F$ is {\it full} if the map $F : C(a, b) \too D(Fa, Fb)$ is surjective for all objects $a, b \in \Ob C$. We say that $F$ is {\it fully faithful} if it is faithful and full.
\item We say that $F$ is {\it split essentially surjective} if there is a family of isomorphisms $\{Fc \cong d \mid c \in \Ob C\}_{d \in \Ob D}$.
\item We say that $F$ is a {\it category equivalence} if there exists a functor $G : D \too C$ and natural isomorphisms $GF \cong {\rm id}_C$ and $FG \cong {\rm id}_D$. When there exists an  equivalence of categories $C \too D$, we say that {\it $C$ and $D$ are equivalent}.
\item We define a groupoid $\core C$ by $\Ob\  \core C = \Ob C$ and $\core C(a, b) = \{f \in C(a, b) \mid \text{ $f$ is an isomorphism}\}$ for all $a, b \in \Ob C$.
\end{enumerate}
\end{df}

The following are standard.
\begin{lem}
If a functor $F : C \too D$ is fully faithful and split essentially surjective, then it is an  equivalence of categories. \qed 
\end{lem}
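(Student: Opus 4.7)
The plan is the standard construction of a quasi-inverse using the choice of isomorphisms provided by split essential surjectivity, together with the bijection on hom-sets provided by fully faithfulness.

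First, I would use the split essential surjectivity hypothesis to fix, for each object $d \in \Ob D$, an object $c_d \in \Ob C$ and an isomorphism $\eta_d : F c_d \too d$ in $D$. This data is given all at once by the hypothesis (no axiom of choice required beyond what is implicit in the definition). I then define the candidate quasi-inverse $G : D \too C$ on objects by $G(d) = c_d$.

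Next I would extend $G$ to morphisms using the fully faithful hypothesis. For a morphism $f : d \too d'$ in $D$, consider the composite $\eta_{d'}^{-1} \circ f \circ \eta_d : F c_d \too F c_{d'}$ in $D$. Since $F : C(c_d, c_{d'}) \too D(F c_d, F c_{d'})$ is a bijection, there is a unique morphism $G(f) : c_d \too c_{d'}$ in $C$ with $F(G(f)) = \eta_{d'}^{-1} \circ f \circ \eta_d$. Functoriality (preservation of identities and composition) then follows immediately from the faithfulness of $F$: both sides of each functoriality equation have the same image under $F$, so they must be equal.

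With $G$ in hand, the family $\{\eta_d\}_{d \in \Ob D}$ is by construction a natural isomorphism $FG \cong \id_D$, where naturality is just the defining equation $\eta_{d'} \circ F(G(f)) = f \circ \eta_d$. For the other direction, for each $c \in \Ob C$ we have $GF(c) = c_{Fc}$ together with the isomorphism $\eta_{Fc} : F c_{Fc} \too Fc$; by fullness and faithfulness there is a unique isomorphism $\mu_c : GF(c) \too c$ in $C$ with $F(\mu_c) = \eta_{Fc}$ (it is an isomorphism because $F$ reflects isomorphisms, being fully faithful). Naturality of $\mu$ in $c$ is again checked by applying $F$ and reducing to the already-established naturality of $\eta$, then invoking faithfulness.

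There is really no hard step here; the only point worth flagging is that the whole argument rests on the bijection $F : C(a, b) \too D(Fa, Fb)$ being used both to \emph{define} $G$ on morphisms (via surjectivity of $F$ on hom-sets) and to \emph{verify} functoriality and naturality (via injectivity). The hypothesis of \emph{split} essential surjectivity, rather than mere essential surjectivity, is what lets us avoid any appeal to choice and makes the assignment $d \mapsto (c_d, \eta_d)$ genuinely a function.
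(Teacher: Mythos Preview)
Your proof is correct and is exactly the standard argument. The paper does not actually supply a proof of this lemma: it is stated under the remark ``The following are standard'' and left without justification, so there is nothing to compare against beyond noting that what you wrote is the usual construction one finds in any reference (e.g.\ Mac~Lane).
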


\begin{lem}
An  equivalence of categories $F : C \too D $ induces an  equivalence of categories $\core F : \core C \too \core D$. \qed
\end{lem}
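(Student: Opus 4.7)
The plan is to verify that $F$ restricts to a functor $\core F : \core C \too \core D$—which is automatic since every functor preserves isomorphisms, as $F(f)F(f^{-1}) = F(\mathrm{id}) = \mathrm{id}$—and then invoke the preceding lemma by checking that this restriction is fully faithful and split essentially surjective.

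For fully faithfulness, I would use that any category equivalence $F : C \too D$ is already fully faithful on the underlying categories, which follows from the defining quasi-inverse $G$ and the natural isomorphisms $GF \cong \mathrm{id}_C$, $FG \cong \mathrm{id}_D$. Faithfulness of $\core F$ is then immediate since $\core C(a,b) \subseteq C(a,b)$. For fullness I would take an isomorphism $g \in \core D(Fa, Fb)$, produce $f \in C(a,b)$ with $F(f) = g$ using fullness of $F$, and then verify that $f$ is itself an isomorphism by taking a preimage $h \in C(b,a)$ of $g^{-1}$ under $F$ and using faithfulness of $F$ to conclude $hf = \mathrm{id}_a$ and $fh = \mathrm{id}_b$ (both sides map to the correct identities under $F$). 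This places $f$ in $\core C(a,b)$ and upgrades fullness of $F$ to fullness of $\core F$.

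For split essential surjectivity, I would fix once and for all a quasi-inverse $G : D \too C$ together with the natural isomorphism $\eta : FG \cong \mathrm{id}_D$ supplied by the definition of an equivalence. For each $d \in \Ob D$ the component $\eta_d : F(Gd) \too d$ is an isomorphism in $D$, hence a morphism of $\core D$, so the assignment $d \mapsto Gd$ together with the family $\{\eta_d\}_{d \in \Ob D}$ furnishes exactly the data required by Definition \ref{catconv} (2) for $\core F$.

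The only non-formal input is the standard fact that fully faithful functors reflect isomorphisms, and this is the step I would spell out in the actual proof since it is what promotes a preimage of an isomorphism to an isomorphism. Beyond that, the argument is purely a matter of restricting the data already supplied by the equivalence $F$ to the subcategories $\core C$ and $\core D$ and then applying the previous lemma.
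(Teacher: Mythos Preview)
Your argument is correct. The paper itself does not give a proof of this lemma: it simply declares both this lemma and the preceding one to be ``standard'' and moves on. So there is no proof in the paper to compare against, and your write-up would in fact supply what the paper omits. The steps you outline---functors preserve isomorphisms, equivalences are fully faithful, fully faithful functors reflect isomorphisms, and the counit of the equivalence witnesses split essential surjectivity---are exactly the standard ones, and they feed cleanly into the preceding lemma as you intend.
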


\begin{rem}
For a classification of objects of a category, we often want to consider `isomorphism classes of objects' and compare it with another category. However, in general, we cannot do that since the class of objects is not necessarily a set. Instead, we consider an  equivalence of categories $\core C \too \core D$ that  implies a bijection between isomorphism classes of objects if they are small. 
\end{rem}

\subsection{Metric spaces}
\begin{df}
\begin{enumerate}
\item A {\it pseudometric space} $(X, d)$ is a set $X$ equipped with a function $d : X \too \R_{\geq 0}$ satisfying that, for all $x, x', x'' \in X$, we have 
\begin{itemize}
\item $d(x, x) = 0$,
\item $d(x, x') = d(x', x)$,
\item $d(x, x') + d(x', x'') \geq d(x, x'')$.
\end{itemize}
\item A {\it Lipschitz map} $f : X \too Y$ between pseudometric spaces $X$ and $Y$ is a map satisfying that $d_Y(fx, fx') \leq d_X(x, x')$ for all $x, x' \in X$. We denote the category of pseudometric spaces and Lipschitz maps by $\QMet$. We call an isomorphism in $\QMet$ an {\it isometry}.
\item A {\it metric space} $(X, d)$ is a pseudometric space satisfying that 
\begin{itemize}
\item $d(x, x') = 0$ if and only if $x = x'$.
\end{itemize}
We denote the full subcategory of $\QMet$ that consists of metric spaces by $\Met$.
\end{enumerate}
\end{df}
\begin{df}
\begin{enumerate}
\item A {\it graph} $G$ is a pair of sets $(V(G), E(G))$ such that $E(G) \subset \{e \in 2^{V(G)}\mid \# e = 2\}$, where we denote the cardinality of a set by $\#$. We call an element of $V(G)$ a {\it vertex}, and an element of $E(G)$ an {\it edge}. A {\it graph homomorphism} $f : G \too H$ between graphs $G$ and $H$ is a map $f : V(G) \too V(H)$ such that $fe \in E(H)$ or $\# fe = 1$ for all $e \in E(G)$. We denote the category of graphs and graph homomorphisms by $\Grph$.
\item A {\it path} in a graph $G$ is a tuple $(x_0, \dots, x_n) \in V(G)^{n+1}$ for some $n\geq 0$ such that $\{x_i, x_{i+1}\} \in E(G)$ for all $0\leq i \leq n-1$. A {\it connected graph} $G$ is a graph such that for every $x,x' \in V(G)$ there exists a path $(x_0, \dots, x_n)$ with $x_0 = x$ and $x_n = x'$. We denote the full subcategory of $\Grph$ that consists of connected graphs by $\Grph_{\sf conn}$.
\item A {\it weighted graph} $(G, w_G)$ is a graph $G$ equipped with a function $w_G : E(G) \too \R_{\geq 0}$. A {\it weighted graph homomorphism} $f : G \too H$ between weighted graphs $G$ and $H$ is a graph homomorphism such that $w_H(fe) \leq w_G(e)$ for all $e \in E(G)$, where we stipulate that $w_H(fe) = 0$ if $\# fe = 1$. We denote the category of weighted graphs and weighted graph homomorphisms by $\wGrph$. We also denote the full subcategory of $\wGrph$ that consists of weighted graphs $(G, w_G)$ such that the graph $G$ is connected by $\wGrph_{\sf conn}$.
\end{enumerate}
\end{df}
\begin{df}
We define functors $\Met \too \QMet$ and $\wGrph_{\sf conn} \too \Grph_{\sf conn}$ by forgetting additional properties. We also define the functor $\QMet \too \wGrph_{\sf conn}$ that sends a pseudometric space $(X, d)$ to a weighted graph $(X, w_X)$ defined by $V(X) = X, E(X) = \{e \in 2^{X}\mid \# e = 2\}$ and $w_X \{x, x'\} = d(x, x')$. 
\end{df}
\begin{prop}\label{leftadjoint}
The above functors have left adjoints.
\end{prop}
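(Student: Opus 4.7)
The plan is to produce an explicit left adjoint for each of the three functors and to verify the adjunction by checking the universal hom-bijection directly; in each case the construction assigns the ``freest'' compatible structure to the given input.

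For the inclusion $\Met \hookrightarrow \QMet$, I would take the \emph{Kolmogorov quotient}. Given $(X,d) \in \QMet$, set $x \sim x'$ iff $d(x,x')=0$; symmetry and the triangle inequality make this an equivalence relation, and $d$ descends to a well-defined metric $\bar d$ on $X/\!\sim$. Any Lipschitz map $f:(X,d)\to (Y,d_Y)$ into a metric space sends $\sim$-equivalent points to a common image, because $d(x,x')=0$ forces $d_Y(fx,fx')=0$ and hence $fx=fx'$ by the metric axiom; so $f$ factors uniquely through the quotient, yielding the natural bijection $\Met(X/\!\sim,Y)\cong\QMet(X,Y)$.

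For the functor $\QMet\to\wGrph_{\sf conn}$, the left adjoint is the \emph{shortest-path} construction: a connected weighted graph $(G,w_G)$ is sent to $(V(G), d_w)$, where $d_w(v,v')$ is the infimum of $\sum_i w_G(e_i)$ over finite paths from $v$ to $v'$. Connectedness ensures $d_w$ is everywhere finite, and the quasi metric axioms are routine. For the hom-bijection, a weighted graph morphism from $(G, w_G)$ to the associated weighted graph of $(X, d_X)$ is --- after absorbing the edge-collapsing case via $d_X(x,x)=0$ --- exactly a map $f:V(G)\to X$ with $d_X(fv,fv')\le w_G(\{v,v'\})$ on every edge. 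Iterating the triangle inequality in $(X, d_X)$ along any path extends this to $d_X(fv,fv')\le\sum_i w_G(e_i)$, and passing to the infimum gives the Lipschitz condition against $d_w$; the converse is immediate by specialising to single edges.

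For the forgetful $\wGrph_{\sf conn}\to\Grph_{\sf conn}$, the left adjoint equips $G$ with the largest possible weight on each edge, so that the constraint $w_H(fe)\le w_{LG}(e)$ becomes vacuous for every potential target $H$. Under this choice, morphisms $LG\to H$ in $\wGrph_{\sf conn}$ are precisely graph homomorphisms $G\to UH$, giving the desired bijection; depending on conventions this may require admitting $\infty$-valued weights or working in a suitable extended variant.

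The main obstacle is the middle adjunction: the identification of weighted-graph morphisms with shortest-path Lipschitz maps turns on iterating the triangle inequality in the target while carefully accommodating edge collapses via $d_X(x,x)=0$. The other two adjunctions are essentially formal once the correct construction is in hand, and naturality in all three cases follows from the explicit nature of the constructions.
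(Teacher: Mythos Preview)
For the Kolmogorov quotient and the shortest-path metric your constructions coincide with the paper's, and your explicit hom-set verifications are a bit more detailed than what the paper offers (the paper merely asserts the adjunctions via a terse unit/counit remark).

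For the forgetful functor $\wGrph_{\sf conn}\to\Grph_{\sf conn}$ you and the paper disagree: the paper assigns the constant weighting $w=0$, whereas you reach for the ``largest possible weight''. Your reasoning is the correct one. If $w_{LG}\equiv 0$, then a weighted morphism $LG\to(H,w_H)$ must satisfy $w_H(fe)\le 0$ on every edge, so the constraint is far from vacuous; equivalently, the would-be counit $(H,0)\to(H,w_H)$ fails to be a weighted morphism unless $w_H\equiv 0$. What the paper's $w=0$ construction actually yields is a \emph{right} adjoint to the forgetful functor: the unit $(H,w_H)\to(H,0)$ is the identity map, which is weighted since $0\le w_H(e)$ always holds. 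So the paper's proof contains an error in this case.

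That said, your own construction does not land in the stated category either: since weights take values in $\R_{\ge 0}$ by the paper's definition, there is no maximal weight, and a left adjoint to this forgetful functor does not exist without passing to $[0,\infty]$-valued weights. Your hedged parenthetical (``depending on conventions this may require admitting $\infty$-valued weights'') should be promoted to a firm statement: as the categories are defined, the third adjunction in the proposition fails, and the extension you suggest is exactly the repair needed.
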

\begin{proof}
We describe each functor $F$ in the following, and they are the left adjoint functors of each functor $G$ of the above since the unit and the counit give that $FGF = F$ and $GFG = G$.
\begin{itemize}
\item We define a functor $\Grph_{\sf conn} \too \wGrph_{\sf conn}$ by sending a connected graph to a weighted graph with $w = 0$. 
\item We define a functor $\wGrph_{\sf conn} \too \QMet$ by sending a weighted graph $(G, w_G)$ to a pseudometric space $(V(G), d_G)$ defined by 
\[
d_G(x, x') = \inf \cup_{n\geq 0}\{\sum_{i=0}^{n-1}w_G\{x_i, x_{i+1}\} \mid (x = x_0, \dots, x_n = x') \text{ is a path on } G\}.
\]

\item We define a functor $\QMet \too \Met$ by sending a pseudometric space $(X, d)$ to a metric space $({\sf KQ}X, \wt{d})$ defined as follows. We define an equivalence relation $\sim$ on $X$ by $x \sim x'$ if and only if $d(x, x') = 0$. We also define a function ${\sf KQ}X := X/\sim \too \R_{\geq 0}$ by $\wt{d}([x], [x']) = d(x, x')$. \qedhere
 \end{itemize}
\end{proof}
\begin{df}
For a pseudometric space $X$, we call the metric space ${\sf KQ}X$ the {\it Kolmogorov quotient} of $X$.
\end{df}
\begin{df}
\begin{enumerate}
\item For pseudometric spaces $(X, d_X)$ and $(Y, d_Y)$, we define a metric space called the {\it $L^1$-product} $(X\times Y, d_{X\times Y})$ by $d_{X\times Y}((x, y), (x', y')) = d_X(x, x') + d_Y(y, y')$ for all $x, x' \in X$ and $y, y' \in Y$.
\item For graphs $G$ and $H$, we define a graph called the {\it cartesian product} $G\times H$ by $V(G\times H) = V(G)\times V(H)$, and $\{(x, y), (x', y')\} \in E(G\times H)$ if and only if one of the following holds :
\begin{itemize}
\item $x = x'$ and $\{y, y'\} \in E(H)$,
\item $\{x, x'\} \in E(G)$ and $y = y'$,
\end{itemize}
for all $x, x' \in V(G)$ and $y, y' \in V(H)$.
\item For weighted graphs $(G, w_G)$ and $(H, w_H)$, we define a weighted graph $(G\times H, w_{G\times H})$ by $w_{G\times H}\{(x, y), (x', y')\} = w_G\{x, x'\} + w_H\{y, y'\}$ for all $\{(x, y), (x', y')\} \in E(G\times H)$, where $G\times H$ is the cartesian product of graphs and we stipulate that $w_G\{x, x\} = w_H\{y, y\} = 0$.
\end{enumerate}
\end{df}
These products make each category a symmetric monoidal category. 
\begin{prop}\label{monoidal}
The functors $\Met \too \QMet \too \wGrph_{\sf conn} \too \Grph_{\sf conn}$ and their left adjoints are strong monoidal except for the functor $ \QMet \too \wGrph_{\sf conn}$ that is lax monoidal.
\end{prop}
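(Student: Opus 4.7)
The plan is to check, for each of the six functors, that it preserves the symmetric monoidal structure. In every one of the four categories the product has underlying vertex or point set $X \times Y$, so the coherence morphism $\mu : FX \otimes FY \too F(X \otimes Y)$ will always be the identity on underlying sets, and the unit (a one-point space, or one-vertex no-edge graph) is preserved trivially. The question therefore reduces in each case to comparing edge sets and weights (or distances).

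First I would dispose of the straightforward functors. The inclusion $\Met \too \QMet$ is strong monoidal because the $L^1$-product is defined by the same formula in both categories. The forgetful $\wGrph_{\sf conn} \too \Grph_{\sf conn}$ is strong monoidal essentially by definition: the product of weighted graphs is built on the cartesian product of underlying graphs, with the weights only adding extra data. Its left adjoint, attaching the zero weight, is strong because $0 + 0 = 0$. The Kolmogorov quotient ${\sf KQ} : \QMet \too \Met$ is strong because $d_X(x,x') + d_Y(y,y') = 0$ holds precisely when $d_X(x,x') = d_Y(y,y') = 0$, so the equivalence classes on $X \times Y$ are exactly products of equivalence classes, and the induced metric on the quotient coincides with the $L^1$-product of the quotient metrics.

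The remaining verification is for the shortest-path functor $\wGrph_{\sf conn} \too \QMet$, where I must show $d_{G \times H}((x,y),(x',y')) = d_G(x,x') + d_H(y,y')$. The inequality $\leq$ follows by concatenating any geodesic in $G$ from $x$ to $x'$ (keeping the second coordinate equal to $y$) with any geodesic in $H$ from $y$ to $y'$ (keeping the first coordinate equal to $x'$); each step is an edge of $G \times H$ and the total weight is $d_G(x,x') + d_H(y,y')$. For $\geq$, a path in $G \times H$ decomposes into its $G$-steps and its $H$-steps, whose projections are paths in $G$ from $x$ to $x'$ and in $H$ from $y$ to $y'$ respectively, while the sum of the weights is unchanged; hence the total weight is at least $d_G(x,x') + d_H(y,y')$.

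The interesting — and only lax — case is $F : \QMet \too \wGrph_{\sf conn}$. Here $F(X \times Y)$ is the complete weighted graph on $X \times Y$ with weight $d_X(x,x') + d_Y(y,y')$ on every pair of distinct points, whereas $F(X) \otimes F(Y)$ is the cartesian product of complete graphs, whose edges are only those pairs $\{(x,y),(x',y')\}$ with exactly one coordinate differing. On those common edges the weights coincide exactly ($d_X(x,x) = 0$ or $d_Y(y,y) = 0$), so the identity on vertices gives a weighted graph homomorphism $\mu : F(X) \otimes F(Y) \too F(X \times Y)$, natural in $X$ and $Y$, and this is the lax structure. The obstacle to strong monoidality is clear: as soon as both $X$ and $Y$ have at least two points there exist edges of $F(X \times Y)$ with both coordinates differing that are missing from $F(X) \otimes F(Y)$, so $\mu$ cannot be an isomorphism in $\wGrph_{\sf conn}$, and $F$ is only lax, exactly as claimed.
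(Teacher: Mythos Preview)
Your proof is correct and follows essentially the same route as the paper's: both dispatch the inclusion and forgetful functors immediately, handle ${\sf KQ}$ via the product-of-classes observation, prove strongness of the shortest-path functor by the additivity $d_{G\times H}=d_G+d_H$ (you give the two inequalities explicitly where the paper compresses this into a single chain), and exhibit the lax structure on $\QMet\to\wGrph_{\sf conn}$ as the identity-on-vertices edge inclusion. Your extra remark that $\mu$ fails to be an isomorphism once both factors have at least two points is a nice addition not present in the paper; the only minor imprecision is the word ``geodesic'' in the $\leq$ direction (infima need not be attained), but the argument goes through verbatim with approximating paths.
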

\begin{proof}
For the functors $\Met \too \QMet$ and $\wGrph_{\sf conn} \too \Grph_{\sf conn}$, the claim is obvious since they are inclusions. The claim is also obvious for the functor $ \Grph_{\sf conn} \too \wGrph_{\sf conn}$ by the definition. For the functor $\QMet \too \Met$, we define a map ${\sf KQ}(X\times Y) \too {\sf KQ}X\times {\sf KQ}Y$ by $[(x, y)] \mapsto ([x], [y])$. This is obviously natural and is an isometry since we have that $[(x, y)]\sim [(x', y')]$ if and only if $[x]\sim [x']$ and $[y]\sim [y']$. For the functor $F : \wGrph_{\sf conn} \too \QMet$, the identity on the set $F(G\times H) = F(G)\times F(H)$ is an isometry since 
\begin{align*}
&\ d_{w_{G\times H}}((x, y), (x', y'))  \\
&= \inf \cup_{n\geq 0}\{\sum_{i=0}^{n-1}w_{G\times H}\{(x_i, y_i), (x_{i+1}, y_{i+1})\} \mid  \\
&\ \  ((x, y) = (x_0, y_0), \dots, (x_n, y_n) = (x', y')) \text{ is a path on } G\times H\} \\
&= \inf \cup_{n\geq 0}\{\sum_{i=0}^{n-1}w_{G}\{x_i, x_{i+1}\} + w_{H}\{y_i, y_{i+1}\} \mid  \\
& \ \  ((x, y) = (x_0, y_0), \dots, (x_n, y_n) = (x', y')) \text{ is a path on } G\times H\} \\
&= \inf \cup_{n\geq 0}\{\sum_{i=0}^{n-1}w_{G}\{x_i, x_{i+1}\} \mid (x = x_0, \dots, x_n = x')\} \\
&+ \inf \cup_{m\geq 0}\{\sum_{i=0}^{m-1} w_{H}\{y_i, y_{i+1}\}  \mid (y = y_0, \dots, y_m = y')\} \\
&= d_{w_G}(x, x') + d_{w_H}(y, y') \\
&= d_{F(G)\times F(H)}((x, y), (x', y')),
\end{align*}
for all $x, x' \in V(G)$ and $y, y' \in V(H)$. It is obviously natural. Finally, for the functor $G : \QMet \too \wGrph_{\sf conn}$, the identity on the set $G(X)\times G(Y) = G(X\times Y)$ is a weighted graph homomorphism since it is an inclusion of graphs and preserves weightings. It is obviously natural. 
\end{proof}

\begin{df}
\begin{enumerate}
\item An {\it extended pseudometric space} is a set $X$ equipped with a function $d : X \too [0, \infty]$ that satisfies the same conditions for pseudometric spaces. In other words, it is a pseudometric space admitting $\infty$ as a value of distance. A {\it Lipschitz map} between extended pseudometric spaces is a distance non-increasing map. We denote the category of extended pseudometric spaces and Lipschitz maps by $\EQMet$.  We similarly define {\it extended metric spaces} and we denote the full subcategory of $\EQMet$ that consists of them by $\EMet$.
\item We define the $L^1$ product of extended pseudometric spaces similarly to that of pseudometric spaces. It makes the category $\EQMet$ a symmetric monoidal category.
\item We define functors $\EMet \too \EQMet$ and $\wGrph \too \Grph$ by forgetting additional properties. We also define the functor $\EQMet \too \wGrph$ similarly to the functor $\QMet \too \wGrph_{\sf conn}$ except that $\{x, x'\}$ does not span an edge for $x, x' \in X$ with $d(x, x') = \infty$.
\end{enumerate}
\end{df}
The following is immediate.
\begin{prop}
\begin{enumerate}
\item The functors $\EMet\too\EQMet\too \wGrph \too \Grph$ have left adjonts. Further, the following diagram is commutative, where the vertical functors are all inclusions.

\begin{equation*}
    \xymatrix{
    \EMet \ar[r] & \EQMet \ar[r]  & \wGrph \ar[r]& \Grph \\
    \Met \ar[u] \ar[r]& \QMet \ar[u]\ar[r]& \wGrph_{\sf conn} \ar[u]\ar[r]& \Grph_{\sf conn}\ar[u].
    }
\end{equation*}
\item The functors $\EMet \too \EQMet$  and $\wGrph \too \Grph$ are strong monoidal and the functor $\EQMet\too \wGrph$ is lax monoidal. \qed
\end{enumerate}
\end{prop}

\section{$\Met_X \simeq \Fib_X$}
In this section, we introduce two notions, the {\it metric action} and the {\it metric fibration}, and show the equivalence between them. The notion of metric fibation is originally introduced by Leinster \cite{L3} in the study of magnitude.  The metric action was introduced by the present author in \cite{A0}, and is the counterpart of {\it lax functors} in category theory, while the metric fibration is a generalization of the {\it Grothendieck fibration}. As written in the introduction, we can think of the Grothendieck (or metric) fibration as the definition of fibrations by `the lifting property', while the lax functor is the one by `the transformation functions'. 
\begin{df}\label{metacdef}
Let $X$ be a metric space. 
\begin{enumerate}
\item A {\it metric action} $F : X \too \Met$ consists of metric spaces $Fx \in \Met$ for all $x \in X$ and isometries $F_{xx'} : Fx \too Fx'$ for all $x, x' \in X$ satisfying the following for all $x, x', x'' \in X$ :
\begin{itemize}
\item $F_{xx} = {\rm id}_{Fx}$ and $F_{x'x} = F_{xx'}^{-1}$,
\item $d_{Fx''}(F_{x'x''}F_{xx'}a, F_{xx''}a) \leq d_X(x, x') + d_X(x', x'') - d_X(x, x'')$ for every $a \in Fx$.
\end{itemize}
\item A {\it metric transformation} $\theta : F \Longrightarrow G$ consists of Lipschitz maps $\theta_x : Fx \too Gx$ for all $x \in X$ satisfying that $G_{xx'}\theta_x = \theta_{x'}F_{xx'}$ for all $x, x' \in X$. We can define the composition of metric transformations $\theta$ and $\theta'$ by $(\theta'\theta)_x = \theta'_x\theta_x$. We denote the category of metric actions $X \too \Met$ and metric transformations by $\Met_X$.
\end{enumerate}
\end{df}

\begin{df}\label{metfibdef}
\begin{enumerate}
\item Let $\pi : E \too X$ be a Lipschitz map between metric spaces. We say that $\pi$ is a {\it metric fibration over} $X$ if it satisfies the following : for all $\e \in E$ and $x \in X$, there uniquely exists $\e_x \in \pi^{-1}x$ such that
\begin{itemize}
\item $d_E(\e, \e_x) = d_X(\pi \e, x)$,
\item $d_E(\e, \e') = d_E(\e, \e_x) + d_E(\e_x, \e')$ for all $\e' \in \pi^{-1}x$.
\end{itemize}
We call the point $\e_x$ the {\it lift of $x$ along $\e$}.
\item For metric fibrations $\pi : E \too X$ and $\pi' : E' \too X$, a {\it morphism} $\varphi : \pi \too \pi'$ is a Lipschitz map $\varphi : E \too E'$ such that $\pi'\varphi = \pi$. We denote the category of metric fibrations over $X$ and their morphisms by $\Fib_X$.
\end{enumerate}
\end{df}

\begin{eg}
For a product of metric spaces $E = X\times Y$, the projection $X\times Y \too X$ is a metric fibration. We call it a {\it trivial metric fibration}.
\end{eg}

\begin{lem}\label{liftfunct}
Let $\pi : E \too X$ be a metric fibration, and $x, x' \in X$. Then the correspondence $\pi^{-1}x \ni a \mapsto a_{x'} \in \pi^{-1}x'$ is an isometry, where we equip the sets $\pi^{-1}x$ and $\pi^{-1}x'$ with the induced metric from $E$.
\end{lem}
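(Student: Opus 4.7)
\smallskip
\noindent\textbf{Proof proposal.} Write $\phi = \phi_{x,x'} : \pi^{-1}x \to \pi^{-1}x'$ for the map $a \mapsto a_{x'}$, and symmetrically $\psi = \phi_{x',x} : \pi^{-1}x' \to \pi^{-1}x$, $c \mapsto c_x$. The plan is to show (i) that $\phi$ is $1$-Lipschitz, (ii) that $\psi \circ \phi = \mathrm{id}_{\pi^{-1}x}$, and then conclude by symmetry. Since both $\phi$ and $\psi$ will be $1$-Lipschitz and mutually inverse, $\phi$ must preserve distances exactly.

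For step (i), take $a, b \in \pi^{-1}x$. Applying the second clause of the lift property to $a$ with the point $b_{x'} \in \pi^{-1}x'$ gives
\[
d_E(a, b_{x'}) = d_E(a, a_{x'}) + d_E(a_{x'}, b_{x'}) = d_X(x, x') + d_E(a_{x'}, b_{x'}).
\]
On the other hand the triangle inequality in $E$ together with $d_E(b, b_{x'}) = d_X(x, x')$ yields $d_E(a, b_{x'}) \leq d_E(a, b) + d_X(x, x')$. Combining these gives $d_E(a_{x'}, b_{x'}) \leq d_E(a, b)$, so $\phi$ is $1$-Lipschitz. The same argument, with the roles of $x$ and $x'$ swapped, shows $\psi$ is $1$-Lipschitz.

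For step (ii), I want to see that $(a_{x'})_x = a$. By the defining property of the lift, the point $(a_{x'})_x \in \pi^{-1}x$ satisfies $d_E(a_{x'}, (a_{x'})_x) = d_X(x', x)$ and, for every $e \in \pi^{-1}x$,
\[
d_E(a_{x'}, e) = d_E(a_{x'}, (a_{x'})_x) + d_E((a_{x'})_x, e).
\]
Specializing this identity to $e = a$, and using that $d_E(a_{x'}, a) = d_X(x, x') = d_E(a_{x'}, (a_{x'})_x)$, I read off $d_E((a_{x'})_x, a) = 0$, hence $(a_{x'})_x = a$ since $E$ is a metric space. Thus $\psi \phi = \mathrm{id}$, and symmetrically $\phi \psi = \mathrm{id}$. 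Together with the $1$-Lipschitz bounds from (i), this forces $d_E(a_{x'}, b_{x'}) = d_E(a, b)$, so $\phi$ is the desired isometry.

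The only subtle point is step (ii): one might be tempted to use uniqueness of the lift abstractly, but the cleanest route is to substitute $e = a$ into the second lift condition and exploit the equality $d_E(a_{x'}, a) = d_X(x, x')$ already supplied by the first lift condition for $a_{x'}$ over $a$. The rest is bookkeeping.
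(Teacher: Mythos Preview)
Your proof is correct and follows essentially the same approach as the paper: both establish $(a_{x'})_x = a$ by specializing the second lift condition to $e = a$, and both compute $d_E(a, b_{x'}) = d_X(x, x') + d_E(a_{x'}, b_{x'})$ via the lift property. The only minor difference is that where you bound $d_E(a, b_{x'})$ from above by the triangle inequality and then invoke symmetry, the paper instead applies the lift condition a second time (to $b_{x'}$, using $(b_{x'})_x = b$) to obtain $d_E(b_{x'}, a) = d_X(x', x) + d_E(b, a)$ and read off equality directly---a cosmetic variation, not a substantive one.
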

\begin{proof}
Note that the statement is obviously true if $E = \emptyset$. We suppose that $E \neq \emptyset$ in the following ; then every fiber $\pi^{-1}x$ is non-empty. For $a \in \pi^{-1}x$, we have $d_E(a_{x'}, a) = d_E(a_{x'}, (a_{x'})_x) + d_E((a_{x'})_x, a) = d_X(x', x) + d_E((a_{x'})_x, a)$. We also have $d_E(a, a_{x'}) = d_X(x, x')$. Hence we obtain that $d_E((a_{x'})_x, a) = 0$, and thus $(a_{x'})_x = a$ for all $x, x' \in X$. This implies that the correspondence is a bijection. Further, we have 
\[
d_E(a, b_{x'}) = d_E(a, a_{x'}) + d_E(a_{x'}, b_{x'}) = d_X(x, x') + d_E(a_{x'}, b_{x'})
\]
and
\[
d_E(b_{x'}, a) = d_E(b_{x'}, b) + d_E(b, a) = d_X(x', x) + d_E(b, a)
\]
for all $a, b \in \pi^{-1}x$. We thereby obtain that $d_E(a, b) = d_E(a_{x'}, b_{x'})$ for all $x, x' \in X$ and $a, b \in \pi^{-1}x$, which implies that the correspondence is an isometry. 
\end{proof}
\begin{lem}\label{morphcomm}
Let $\varphi : \pi \too \pi'$ be a morphism of metric fibrations. For all $x, x' \in X$ and $a \in \pi^{-1}x$, we have $(\varphi a)_{x'} = \varphi a_{x'}$.
\end{lem}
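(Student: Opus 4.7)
The plan is to use the uniqueness clause in the definition of a metric fibration: $(\varphi a)_{x'}$ is characterized as the unique element of $(\pi')^{-1}x'$ satisfying the two lifting conditions with respect to $\varphi a$. I will show that $\varphi a_{x'}$ is a valid candidate and then invoke uniqueness. Note first that $\varphi a_{x'} \in (\pi')^{-1}x'$ since $\pi'\varphi a_{x'} = \pi a_{x'} = x'$.

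The first step is to verify the distance condition $d_{E'}(\varphi a, \varphi a_{x'}) = d_X(x, x')$. The upper bound $d_{E'}(\varphi a, \varphi a_{x'}) \le d_E(a, a_{x'}) = d_X(x, x')$ follows from $\varphi$ being Lipschitz together with condition (i) in the definition of the lift $a_{x'}$ inside $E$. The reverse inequality $d_{E'}(\varphi a, \varphi a_{x'}) \ge d_X(\pi'\varphi a, \pi'\varphi a_{x'}) = d_X(x, x')$ follows from $\pi'$ being Lipschitz. Hence equality holds.

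Rather than attempting to verify the second lifting condition for $\varphi a_{x'}$ directly (which would require distance information between $\varphi a$ and arbitrary points of $(\pi')^{-1}x'$ not a priori in the image of $\varphi$), I would apply the second lifting condition for the genuine lift $(\varphi a)_{x'}$ to the test point $e' := \varphi a_{x'}$, obtaining
\[
d_{E'}(\varphi a, \varphi a_{x'}) = d_{E'}(\varphi a, (\varphi a)_{x'}) + d_{E'}((\varphi a)_{x'}, \varphi a_{x'}) = d_X(x, x') + d_{E'}((\varphi a)_{x'}, \varphi a_{x'}).
\]
Combining with the equality from the previous step forces $d_{E'}((\varphi a)_{x'}, \varphi a_{x'}) = 0$, hence $(\varphi a)_{x'} = \varphi a_{x'}$ since $E'$ is a metric space.

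The only real conceptual obstacle is recognizing that one should not try to check the two defining conditions for $\varphi a_{x'}$ from scratch, but rather exploit the characterization via $(\varphi a)_{x'}$ in the reverse direction; beyond that, the argument is a short chain of Lipschitz estimates and a distance cancellation.
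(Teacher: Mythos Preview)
Your proof is correct and follows essentially the same route as the paper's: both use the second lifting condition for $(\varphi a)_{x'}$ to write $d_{E'}((\varphi a)_{x'}, \varphi a_{x'}) = d_{E'}(\varphi a, \varphi a_{x'}) - d_X(x,x')$ and then bound this above by $0$ via the Lipschitz property of $\varphi$. The only cosmetic difference is that you first establish the full equality $d_{E'}(\varphi a, \varphi a_{x'}) = d_X(x,x')$ (also invoking that $\pi'$ is Lipschitz), whereas the paper uses only the upper bound, which already suffices.
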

\begin{proof}
We have 
\begin{align*}
d_{E'}((\varphi a)_{x'}, \varphi a_{x'}) &= d_{E'}(\varphi a, \varphi a_{x'}) - d_X(x, x') \\
&\leq d_{E}(a,  a_{x'}) - d_X(x, x') \\
&= 0, 
\end{align*}
hence we obtain that $(\varphi a)_{x'} = \varphi a_{x'}$. 
\end{proof}

\begin{df}
Let $F : X \too \Met$ be a metric action. We define a metric fibration $\pi_F : E(F) \too X$ as follows : 
\begin{enumerate}
\item $E(F) = \{(x, a) \mid a \in Fx, x \in X\}$,
\item $d_{E(F)}((x, a), (x', b)) = d_X(x, x') + d_{Fx'}(F_{xx'}a, b)$,
\item $\pi_F(x, a) = x$.
\end{enumerate}
We call the above construction the {\it Grothendieck construction}.
\end{df}
\begin{prop}\label{grofun}
The Grothendieck construction gives a functor $ E : \Met_X \too \Fib_X$.
\end{prop}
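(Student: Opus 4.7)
The plan is to verify in turn that (i) $E(F)$ is a bona fide metric space, (ii) $\pi_F$ is a metric fibration, (iii) each metric transformation $\theta:F\Longrightarrow G$ induces a morphism $E(\theta)$ of metric fibrations, and (iv) the assignments $F\mapsto \pi_F$, $\theta\mapsto E(\theta)$ are functorial.

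First I would check that $d_{E(F)}$ is indeed a metric. Non-degeneracy and vanishing on the diagonal are immediate from $F_{xx}=\mathrm{id}$ and the fact that each $F_{xx'}$ is an isometry. Symmetry follows by rewriting $d_{Fx}(F_{x'x}b,a)=d_{Fx'}(b,F_{xx'}a)$, using $F_{x'x}=F_{xx'}^{-1}$. The triangle inequality is the key computation; for $(x,a),(x',b),(x'',c)\in E(F)$ I would estimate
\begin{align*}
d_{Fx''}(F_{xx''}a,c)
&\leq d_{Fx''}(F_{xx''}a,F_{x'x''}F_{xx'}a)+d_{Fx''}(F_{x'x''}F_{xx'}a,F_{x'x''}b)+d_{Fx''}(F_{x'x''}b,c)\\
&\leq \bigl(d_X(x,x')+d_X(x',x'')-d_X(x,x'')\bigr)+d_{Fx'}(F_{xx'}a,b)+d_{Fx''}(F_{x'x''}b,c),
\end{align*}
where the last step uses the cocycle defect bound in Definition \ref{metacdef}(1) together with the isometry property of $F_{x'x''}$. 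Adding $d_X(x,x'')$ to both sides yields exactly $d_{E(F)}((x,a),(x'',c))\leq d_{E(F)}((x,a),(x',b))+d_{E(F)}((x',b),(x'',c))$. This triangle inequality is the only place where the metric action axiom does real work, and I expect it to be the main (though still routine) obstacle.

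Next I would verify the fibration condition. Since $d_{E(F)}((x,a),(x',b))=d_X(x,x')+d_{Fx'}(F_{xx'}a,b)$, the equality $d_{E(F)}((x,a),(x',b))=d_X(\pi_F(x,a),x')$ forces $b=F_{xx'}a$, so the unique candidate lift of $x'$ along $(x,a)$ is $(x,a)_{x'}:=(x',F_{xx'}a)$. The geodesic condition for any third point $(x',b')\in\pi_F^{-1}x'$ then reduces to
\[
d_X(x,x')+d_{Fx'}(F_{xx'}a,b')=d_X(x,x')+d_{Fx'}(F_{xx'}a,F_{xx'}a)+d_{Fx'}(F_{xx'}a,b'),
\]
which is trivial. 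Finally $\pi_F$ is Lipschitz because the fiber contribution to $d_{E(F)}$ is non-negative.

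For a metric transformation $\theta:F\Longrightarrow G$ I would set $E(\theta)(x,a):=(x,\theta_x a)$; clearly $\pi_G\circ E(\theta)=\pi_F$. To check the Lipschitz condition I use the naturality identity $G_{xx'}\theta_x=\theta_{x'}F_{xx'}$ to rewrite
\[
d_{Gx'}(G_{xx'}\theta_x a,\theta_{x'}b)=d_{Gx'}(\theta_{x'}F_{xx'}a,\theta_{x'}b)\leq d_{Fx'}(F_{xx'}a,b),
\]
so that $d_{E(G)}(E(\theta)(x,a),E(\theta)(x',b))\leq d_{E(F)}((x,a),(x',b))$. Functoriality, namely $E(\mathrm{id}_F)=\mathrm{id}_{E(F)}$ and $E(\theta'\theta)=E(\theta')E(\theta)$, is immediate from the pointwise definition. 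Together these steps establish that the Grothendieck construction defines a functor $E:\Met_X\too\Fib_X$.
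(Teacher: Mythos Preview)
Your proof is correct and follows essentially the same approach as the paper's. The paper's proof actually checks only steps (iii) and (iv), treating the fact that $E(F)$ is a metric space and $\pi_F$ a metric fibration as implicit in the preceding definition (these verifications appear in the reference \cite{A0}); your explicit verification of (i) and (ii), in particular the triangle inequality via the cocycle defect bound, fills in exactly what the paper omits and is carried out correctly.
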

\begin{proof}
Let $\theta : F \Longrightarrow G$ be a metric transformation. Then we construct Lipschitz maps $\varphi_\theta : E(F) \too E(G)$ by $\varphi_\theta (x, a) = (x, \theta_x a)$ for all $x \in X$ and $a \in Fx$. To see that $\varphi_\theta$ is a Lipschitz map, observe that
\begin{align*}
 d_{E(G)}(\varphi_\theta (x, a), \varphi_\theta (x', b)) &= 
d_{E(G)}((x, \theta_x a), (x', \theta_{x'} b)) \\
&= d_X(x, x') + d_{Gx'}(G_{xx'}\theta_x a, \theta_{x'}b) \\
&= d_X(x, x') + d_{Gx'}(\theta_{x'} F_{xx'} a, \theta_{x'}b) \\
&\leq  d_X(x, x') + d_{Fx'}( F_{xx'} a, b) \\
&=  d_{E(F)}((x, a), (x', b)).
\end{align*}
It remains to see that the correspondence $\theta \mapsto \varphi_\theta$ is functorial---that is we have $\varphi_{{\rm id}_F} = {\rm id}_{E(F)}$ and $\varphi_{\theta'\theta} = \varphi_{\theta'}\varphi_\theta$ for all metric transformations $\theta : F \Longrightarrow G$ and $\theta' : G \Longrightarrow H$. The former is obvious and the latter is checked as follows :
\begin{align*}
\varphi_{\theta'\theta}(x, a) &= (x, (\theta'\theta)_x a) \\
&= (x, \theta'_x\theta_x a) \\
&=  \varphi_{\theta'}\varphi_\theta(x, a).
\end{align*}
Finally, $\varphi_\theta$ is obviously a morphism of the metric fibration. 
\end{proof}
\begin{prop}\label{grofuninv}
We have a functor $F : \Fib_X \too \Met_X$ sending a metric fibration $\pi$ to a metric action $F_\pi$ with $F_\pi x = \pi^{-1}x$.
\end{prop}
\begin{proof}
Let $\pi : E \too X$ be a metric fibration. We define a metric action $F_\pi : X \too \Met$ by $F_\pi x = \pi^{-1}x$ and $(F_\pi)_{xx'}a = a_{x'}$ for all $x, x' \in X$ and $a\in \pi^{-1}x$, where we equip the set $\pi^{-1}x$ with the induced metric from $E$. It follows that $(F_\pi)_{xx} = {\rm id}_{F_\pi x}$ by the uniqueness of the lifts, and that $(F_\pi)_{xx'}$ defines an isometry $F_\pi x \too F_\pi x'$ with $(F_\pi)_{xx'}^{-1} = (F_\pi)_{x'x}$ by Lemma \ref{liftfunct}. Further, we have that
\begin{align*}
d_{F_\pi x''}((F_\pi)_{x'x''}(F_\pi)_{xx'}a, (F_\pi)_{xx''}a) &= d_{F_\pi x''}((a_{x'})_{x''}, a_{x''}) \\
&= d_{E}(a, (a_{x'})_{x''}) - d_X(x, x'') \\
&\leq  d_{E}(a, a_{x'}) + d_{E}(a_{x'}, (a_{x'})_{x''}) - d_X(x, x'') \\
&=  d_{X}(x, x') + d_{X}(x', x'') - d_X(x, x''), \\
\end{align*}
for all $x, x', x'' \in X$ and $a \in F_\pi x$. Hence $F_\pi$  defines a metric action $X \too \Met$. Next, let $\varphi : \pi \too \pi'$ be a morphism of metric fibrations. We define a metric transformation $\theta_\varphi : F_{\pi} \Longrightarrow F_{\pi'}$ by $(\theta_\varphi)_x a = \varphi a$ for all $x \in X$ and $a \in F_{\pi}x$. Then we have that 
\begin{align*}
(F_{\pi'})_{xx'}(\theta_\varphi)_x a &= (F_{\pi'})_{xx'}\varphi a \\
&= (\varphi a)_{x'} \\
&= \varphi a_{x'} \\
&= (\theta_\varphi)_{x'}(F_{\pi})_{xx'},
\end{align*}
where the third line follows from Lemma \ref{morphcomm}. Thus, $\theta_\varphi$  defines a metric transformation $F_{\pi} \Longrightarrow F_{\pi'}$. Note that we have $\theta_{{\rm id}_\pi} = {\rm id}_{F_\pi}$ and $(\theta_{\psi\varphi})_xa = \psi\varphi a = (\theta_{\psi})_x(\theta_{\varphi})_xa$ for morphisms $\varphi$ and $\psi$, which implies the functoriality of $F$. 
\end{proof}
The following is the counterpart of the correspondence between lax functors and the Grothendieck fibrations (B1 \cite{JPT}), and enhances Corollary 5.26 of \cite{A0}.
\begin{prop}\label{metfib}
The Grothendieck construction functor $ E : \Met_X \too \Fib_X$ is an  equivalence of categories.
\end{prop}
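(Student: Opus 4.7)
The plan is to show that the functor $F : \Fib_X \too \Met_X$ constructed in Proposition \ref{grofuninv} is a quasi-inverse to $E$. Concretely, I would exhibit natural isomorphisms $\eta : \mathrm{id}_{\Met_X} \Longrightarrow F\circ E$ and $\varepsilon : E\circ F \Longrightarrow \mathrm{id}_{\Fib_X}$, and then verify naturality using the explicit formulas for the two functors on morphisms.

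For $\eta$, let $G \in \Met_X$. Applying $E$ then $F$ yields the metric action $F_{\pi_G}$ with $F_{\pi_G}x = \pi_G^{-1}x = \{x\}\times Gx$. The bijection $\eta_{G,x} : Gx \too F_{\pi_G}x$, $a \mapsto (x,a)$, is an isometry because the induced metric on $\pi_G^{-1}x$ is $d_{E(G)}((x,a),(x,b)) = d_{Gx}(G_{xx}a,b) = d_{Gx}(a,b)$. To check $\eta_G$ is a metric transformation, I would identify the lift of $x'$ along $(x,a)$: the candidate $(x', G_{xx'}a)$ lies in $\pi_G^{-1}x'$ and satisfies $d_{E(G)}((x,a),(x',G_{xx'}a)) = d_X(x,x')$ together with the required additivity; uniqueness of the lift then gives $(F_{\pi_G})_{xx'}(x,a) = (x',G_{xx'}a)$, which under $\eta_G$ corresponds exactly to $G_{xx'}a$. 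Naturality of $\eta$ in $G$ is immediate from the formula $\varphi_\theta(x,a) = (x,\theta_x a)$ of Proposition \ref{grofun}.

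For $\varepsilon$, let $\pi : E \too X$ be a metric fibration. The set $E(F_\pi) = \{(x,a)\mid a\in \pi^{-1}x\}$ is in bijection with $E$ via $\varepsilon_\pi : (x,a) \mapsto a$, with inverse $a \mapsto (\pi a, a)$. The crucial point is that $\varepsilon_\pi$ is an isometry: on the Grothendieck side one has
\[
d_{E(F_\pi)}((x,a),(x',b)) = d_X(x,x') + d_E(a_{x'}, b),
\]
while the defining axiom of a metric fibration applied to $a$ and $x'$ gives, for every $b\in \pi^{-1}x'$,
\[
d_E(a,b) = d_E(a,a_{x'}) + d_E(a_{x'},b) = d_X(x,x') + d_E(a_{x'},b).
\]
Commutativity with the projections to $X$ is clear, so $\varepsilon_\pi$ is an isomorphism in $\Fib_X$. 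Naturality in $\pi$ follows because for a morphism $\varphi : \pi \too \pi'$ one has $E(\theta_\varphi)(x,a) = (x, \varphi a)$, and this is sent by $\varepsilon_{\pi'}$ to $\varphi a = \varphi(\varepsilon_\pi(x,a))$.

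The main obstacle is the isometry check for $\varepsilon_\pi$, but it is precisely what the additive lifting condition in Definition \ref{metfibdef} was designed to guarantee; once this identification is in place the remaining verifications are routine, with Lemma \ref{morphcomm} already doing the bookkeeping needed for naturality on morphisms. Putting $\eta$ and $\varepsilon$ together then exhibits $E$ and $F$ as mutually quasi-inverse functors, so $E$ is a category equivalence.
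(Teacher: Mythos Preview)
Your proposal is correct and follows essentially the same approach as the paper: both use the functor $F$ of Proposition~\ref{grofuninv} as a quasi-inverse and construct the natural isomorphisms $FE \cong {\rm id}_{\Met_X}$ and $EF \cong {\rm id}_{\Fib_X}$, with the latter given by the map $(x,a)\mapsto a$. The paper is terser (declaring the $FE$ side ``immediate'' and the isometry property of $(x,a)\mapsto a$ ``obvious''), while you spell out the lift identification and the use of the additive axiom in Definition~\ref{metfibdef}, but the underlying argument is identical.
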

\begin{proof}
We show that $FE \cong {\rm id}_{\Met_X}$ and $EF \cong {\rm id}_{\Fib_X}$. It is immediate to verify $FE \cong {\rm id}_{\Met_X}$ by the definition. We show that $EF_\pi \cong \pi$ for a metric fibration $\pi : E \too X$. Note that $EF_\pi$ is a metric space consisting of points $(x, a)$ with $x \in X$ and $a \in \pi^{-1}x$, and we have $d_{EF_\pi}((x, a), (x', a')) = d_X(x, x') + d_{\pi^{-1}x'}(a_{x'}, a')$. We define a map $f : EF_\pi \too E$ by $f(x, a) = a$ for all $x \in X$ and $a \in \pi^{-1}x$. Then it is obviously an isometry and preserves fibers, hence is an isomorphism of metric fibrations. The naturality of this isomorphism is obvious. 

% We show that it is essentially surjective and fully faithful. The essentially surjectivity follows from the construction of metric actions from metric fibrations. Let $\theta, \theta' : F \Longrightarrow G$ be metric transformations such that we have $\varphi_\theta = \varphi_{\theta'}$. Then we have $\theta_x a = \theta'_x a$ for any $x \in X$ and $a \in Fx$. It implies that $\theta = \theta'$, hence the functor is faithful. Let $\varphi : E(F) \too E(G)$ be a morphism. It induces Lipschitz maps $\varphi_x : Fx \too Gx$ defined by $\varphi_x a = \varphi (x, a)$. Since $\varphi$ is a Lipschitz map, we have
% \begin{align*}
% d_X(x, x') + d_{Fx'}(F_{xx'}a, b) &= d_{E(F)}((x, a), (x', b))  \\
% &\geq  d_{E(G)}(\varphi(x, a), \varphi(x', b)) \\
% &= d_{E(G)}((x, \varphi_xa), (x', \varphi_{x'}b)) \\
% &= d_X(x, x') + d_{Gx'}(G_{xx'}\varphi_xa, \varphi_{x'}b).
% \end{align*}
% In particular, we have $d_{Gx'}(G_{xx'}\varphi_xa, \varphi_{x'}F_{xx'}a) = 0$ for any $x, x' \in X$ and $a \in Fx$. It implies that $G_{xx'}\varphi_xa =  \varphi_{x'}F_{xx'}a$, hence the family $\{\varphi_x\}_{x\in X}$ is a metric transformation. This shows that the Grothendieck construction functor is full. 
\end{proof}
\begin{rem}
Note that the trivial metric fibration corresponds to the constant metric action, that is $F_{xx'}={\rm id}$ for all $x, x' \in X$.
\end{rem}
\section{The  metric fundamental group of a metric space}
In this section, we give a concise introduction to {\it metric groups}. We also give a definition of {\it metric fundamental group}, which plays a role of $\pi_1$ for metric space in the classification of metric fibrations.
\subsection{Metric groups}
\begin{df}[cf. \cite{Roff2} Definition 6.1]\label{metgrpdef}
\begin{enumerate}
\item A {\it  metric group} is a monoid object in $\Met$ that is a group when we forget the metric space structure. That is, a metric space $\G$ equipped with a Lipschitz map $\cdot : \G\times\G \too \G$, a function $(-)^{-1} : \G \too \G$ and a point $e \in \G$ satisfying the suitable conditions of monoids and groups.
\item For metric groups $\mathcal{G}$ and $\mathcal{H}$, a {\it homomorphism} from $\G$ to $\H$ is a Lipschitz map $\G \too \H$ that commutes with the group structure.
\item We denote the category of metric groups and homomorphisms by $\MGrp$. 
\end{enumerate}
\end{df}
\begin{lem}\label{metgrp}
Let $(\G, d)$ be a metric group. Then 
\begin{enumerate}
\item we have $d(kg, kh) = d(g, h) = d(gk, hk)$ for all $g, h, k \in \G$.
\item we have $d(g, h) = d(g^{-1}, h^{-1})$ for all $g, h \in \G$.
\end{enumerate}
\end{lem}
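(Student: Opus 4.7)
The plan is to exploit the fact that, since $\G$ is a group object in $\Met$, multiplication $\mu : \G \times \G \too \G$ and inversion $(-)^{-1} : \G \too \G$ are Lipschitz maps with respect to the $L^1$-product metric, and then to use their group-theoretic invertibility to upgrade the resulting Lipschitz inequalities into equalities.

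For (1), first I would observe that for fixed $k \in \G$ the slice map $s_k : \G \too \G \times \G$, $g \mapsto (k, g)$, is an isometric embedding, since by the definition of the $L^1$-product we have $d((k,g), (k,g')) = d(k,k) + d(g, g') = d(g,g')$. Composing with $\mu$ gives a Lipschitz map $L_k := \mu \circ s_k : \G \too \G$, $g \mapsto kg$, so that $d(kg, kh) \leq d(g,h)$. Applying the same inequality to $k^{-1}$ instead of $k$ yields $d(g, h) = d(k^{-1}(kg), k^{-1}(kh)) \leq d(kg, kh)$, which combines to $d(kg, kh) = d(g, h)$. The identity $d(gk, hk) = d(g,h)$ is handled symmetrically using the slice $g \mapsto (g, k)$.

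For (2), the Lipschitz property of $(-)^{-1}$ gives directly $d(g^{-1}, h^{-1}) \leq d(g, h)$ for all $g,h \in \G$. Substituting $g^{-1}, h^{-1}$ for $g, h$ and using $(g^{-1})^{-1} = g$ yields the reverse inequality $d(g,h) \leq d(g^{-1}, h^{-1})$, whence equality.

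There is no real obstacle here: the whole argument is just the observation that a Lipschitz self-map of a metric space which admits a Lipschitz two-sided inverse must be an isometry, applied to the left- and right-translation maps and to inversion. The only point that requires some care is making sure the slice map $g \mapsto (k, g)$ is genuinely isometric rather than merely Lipschitz, but this is immediate from the explicit formula for the $L^1$-product metric.
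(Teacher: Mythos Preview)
Your proof is correct. Part (1) is essentially identical to the paper's argument. For part (2), however, you take a different route: you apply the Lipschitz property of the inversion map directly and use that $(-)^{-1}$ is an involution to obtain the reverse inequality, whereas the paper derives (2) purely from (1) via the chain $d(g^{-1}, h^{-1}) = d(e, gh^{-1}) = d(h, g) = d(g, h)$, using left- and right-translation invariance. Your approach has the virtue of uniformity---the same ``Lipschitz map with Lipschitz inverse is an isometry'' trick is applied three times---while the paper's approach makes the structural point that (2) is already a formal consequence of (1) and does not require the Lipschitz condition on inversion at all.
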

\begin{proof}
\begin{enumerate}
\item Since the map $\G \too \G : g \too kg$ is a Lipschitz map for every $k \in \G$, we have $d(kg, kh) \leq d(g, h)$ and $d(k^{-1}(kg), k^{-1}(kh)) \leq d(kg, kh)$. Hence we obtain that $d(kg, kh) = d(g, h)$. The other identity can be proved similarly.

\item By (1), we have $d(g^{-1}, h^{-1}) = d(e, gh^{-1}) = d(h, g) = d(g, h)$.
\end{enumerate}

\end{proof}
\begin{eg}\label{auty}
Let $(X, d)$ be a metric space, and let $\Aut^u X$ be the set of isometries $f$ on $X$ such that $\sup_{x\in X}d_X(x, fx)< \infty$. We equip $\Aut^u X$ with a group structure by compositions. We also define a distance function on $\Aut^u X$ by $d_{\Aut^u X}(f, g) = \sup_{x\in X} d_X(fx, gx)$. It is straightforward to verify that $(\Aut^u X, d_{\Aut^u X})$ is a metric group.  Note that, if the metric space $X$ is bounded, meaning that
\[
\sup_{x,x'\in X} d_X(x, x')< \infty,
\]
then the group $\Aut^u X$ consists of all isometries of $X$. In this case we denote it by $\Aut X$.
\end{eg}

\begin{df}
\begin{enumerate}
\item A {\it normed group} is a group $G$ equipped with a map $|-| : G \too \R_{\geq 0}$ called a norm, satisfying that 
\begin{itemize}
\item $|g| = 0$ if and only if $g = e$,
\item $|gh| \leq |g| + |h|$ for all $g, h \in G$.
\end{itemize}

\item A norm on $G$ is called {\it conjugation invariant} if it satisfies that $|h^{-1}gh| = |g|$ for all $g, h \in G$.
\item A norm on $G$ is called {\it inverse invariant} if it satisfies that $|g^{-1}| = |g|$ for all $g \in G$.
\item For normed groups $G$ and $H$, a {\it normed homomorphism} from $G$ to $H$ is a group homomorphism $\varphi : G \too H$ satisfying that $|\varphi g|\leq |g|$.
\item We denote the category of conjugation and inverse invariant normed groups and normed homomorphisms by $\NGrp_{\rm conj}^{-1}$. 
\end{enumerate}
\end{df}
\begin{prop}[E. Roff,  \cite{Roff} Chap. 6 ]\label{eroff}
The categories $\MGrp$ and $\NGrp_{\rm conj}^{-1}$ are equivalent. 
\end{prop}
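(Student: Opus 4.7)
The plan is to construct mutually inverse functors $\Phi : \MGrp \to \NGrp_{\rm conj}^{-1}$ and $\Psi : \NGrp_{\rm conj}^{-1} \to \MGrp$ that are in fact isomorphisms of categories (both compositions are literally the identity on objects and morphisms).

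First I would define $\Phi$ on objects. Given $(\G, d) \in \MGrp$, declare $|g| := d(e, g)$. The three axioms for a conjugation and inverse invariant normed group fall out quickly from Lemma \ref{metgrp}: positivity holds since $X$ is a \emph{metric} space; the subadditivity reads $|gh| = d(e, gh) \leq d(e, g) + d(g, gh) = d(e,g) + d(e,h)$ by left translation invariance; conjugation invariance reads $|h^{-1}gh| = d(e, h^{-1}gh) = d(h, gh) = d(e, g)$; and inverse invariance is just Lemma \ref{metgrp} (2) applied to $(e, g)$. A homomorphism $\varphi : \G \to \H$ in $\MGrp$ is automatically $1$-Lipschitz and unit-preserving, so $|\varphi g| = d_\H(e, \varphi g) = d_\H(\varphi e, \varphi g) \leq d_\G(e, g) = |g|$. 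Thus $\Phi$ is a well-defined functor.

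Next I would define $\Psi$. Given $(G, |\cdot|)$ in $\NGrp_{\rm conj}^{-1}$, set $d(g, h) := |g^{-1}h|$. The key check is the triangle inequality: $|g^{-1}k| = |g^{-1}h \cdot h^{-1}k| \leq |g^{-1}h| + |h^{-1}k|$. Symmetry uses inverse invariance: $|h^{-1}g| = |(g^{-1}h)^{-1}| = |g^{-1}h|$. Separation of points is immediate from $|g| = 0 \iff g = e$. For this to be a metric \emph{group}, multiplication and inversion must be Lipschitz. For multiplication with the $L^1$-product metric,
\begin{align*}
d(gh, g'h') &= |h^{-1}g^{-1}g'h'| \\
&\leq |h^{-1}g^{-1}g'h| + |h^{-1}h'| \\
&= |g^{-1}g'| + |h^{-1}h'| \\
&= d(g, g') + d(h, h'),
\end{align*}
where the third line uses conjugation invariance. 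For inversion, $d(g^{-1}, h^{-1}) = |gh^{-1}| = |hg^{-1}|$ by inverse invariance, and $hg^{-1} = g(g^{-1}h)g^{-1}$, so by conjugation invariance this equals $|g^{-1}h| = d(g, h)$. A normed homomorphism $\varphi$ satisfies $d_\H(\varphi g, \varphi h) = |\varphi(g^{-1}h)| \leq |g^{-1}h| = d_\G(g, h)$, hence is Lipschitz and a group homomorphism, i.e.\ a morphism in $\MGrp$.

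Finally, it suffices to observe that $\Psi \circ \Phi$ and $\Phi \circ \Psi$ are the identity: starting from $d$, we recover $d$ via $|g^{-1}h| = d(e, g^{-1}h) = d(g, h)$ by left invariance; starting from $|\cdot|$, we recover $|g| = d(e, g) = |e^{-1}g| = |g|$. Morphisms are preserved by both functors on the level of underlying set maps, so no further check is needed. The only point where I expect the argument to require care is the Lipschitz property of multiplication under $\Psi$: this is exactly where conjugation invariance is indispensable, and it is also the place where the naive definition $d(g,h) = |g^{-1}h|$ would fail without that hypothesis; everything else is formal manipulation within the group axioms.
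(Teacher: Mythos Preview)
Your proof is correct and follows essentially the same approach as the paper: both construct the functors via $|g| = d(e,g)$ and $d(g,h) = |g^{-1}h|$ (the paper uses $|h^{-1}g|$, which agrees by inverse invariance) and observe that the compositions are the identity. You supply considerably more detail than the paper's sketch---in particular the explicit verification that multiplication and inversion are Lipschitz under $\Psi$, which the paper leaves as ``straightforward''---but the strategy is identical.
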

\begin{proof}
Given a metric group $\G$, we can define a conjugation and inverse invariant normed group ${\sf N}\G$ by 
\begin{itemize}
\item ${\sf N}\G = \G$ as a group,
\item $|g| = d_{\G}(e, g)$ for all $g \in {\sf N}\G$.
\end{itemize}
Note that this construction is functorial. Conversely, we can define a metric group ${\sf M}G$ given a conjugation and inverse invariant normed group $G$ by 
\begin{itemize}
\item ${\sf M}G = G$ as a group,
\item $d_{{\sf M}G}(g, h) = |h^{-1}g|$.
\end{itemize}
This construction is also functorial. It is straightforward to verify that the compositions of these functors are naturally isomorphic to the identities.  
\end{proof}
\subsection{The  metric fundamental group}
\begin{df}
Let $X$ be a metric space and $x \in X$. 
\begin{enumerate}
\item For each $n \geq 0$, we define a set $P_n(X, x)$ by
\[
P_n(X, x) := \{(x, x_1, \dots, x_n, x) \in X^{n+2}\}.
\]
We also define that $P(X, x) := \bigcup_nP_n(X, x)$.
\item We define a connected graph $G(X, x)$ with the vertex set $P(X, x)$ as follows. For $u, v \in P(X, x)$, an unordered pair $\{u, v\}$ spans an edge if and only if it satisfies both of the following : 
\begin{itemize}
\item There is an $n \geq 0$ such that $u \in P_n(X, x)$ and $v \in P_{n+1}(X, x)$.
\item There is a $0 \leq j \leq n$ such that $u_i = v_i$ for $1 \leq i \leq j$ and $u_i = v_{i+1}$ for $j+1 \leq i \leq n$, where we have $u = (x, u_1, \dots, u_n, x)$ and $v = (x, v_1, \dots, v_{n+1}, x)$.
\end{itemize}
\item We equip the graph $G(X, x)$ with a weighted graph structure by defining a function $w_{G(X, x)}$ on edges by 
\[
w_{G(X, x)}\{u, v\} = \begin{cases} d_X(v_j, v_{j+1}) + d_X(v_{j+1}, v_{j+2}) - d_X(v_j, v_{j+2}) & v_j\neq v_{j+2}, \\ 0 & v_j = v_{j+2},\end{cases}
\]
where we use the notations in (2).
\item We denote the quasi-metric space obtained from the weighted graph $G(X, x)$ by $Q(X, x)$. We  denote the Kolmogorov quotient of $Q(X, x)$ by $\pi_1^{m}(X, x)$.
\end{enumerate}
\end{df}

\begin{lem}
Let $X$ be a metric space and $x \in X$.
\begin{enumerate}
\item The metric space $\pi^m_1(X, x)$ has a metric group structure with multiplication defined by the concatenation defined as 
\[
[(x, u_1, \dots, u_n, x)]\bullet [(x, v_1, \dots, v_k, x)] = [(x, u_1, \dots, u_n, v_1, \dots, v_k, x)].
\]
The unit is given by $[(x, x)] \in \pi^m_1(X, x)$.
\item for all $x' \in X$, we have an isomorphism $\pi^m_1(X, x) \cong \pi^m_1(X, x')$ given by 
\[
[(x, u_1, \dots, u_n, x)] \mapsto [(x', x, u_1, \dots, u_n, x, x')].
\]

\end{enumerate}
\end{lem}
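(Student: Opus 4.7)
My plan is to verify parts (1) and (2) by tracking how $\varepsilon$-cost paths in the graphs $G(X, x)$ and $G(X, x')$ behave under the relevant constructions.

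For part (1), I first show $\bullet$ descends to $\pi^m_1(X, x)$: an $\varepsilon$-cost path from $u$ to $u'$ in $G(X, x)$ transfers to a path from $u \bullet v$ to $u' \bullet v$ of no greater cost by applying each insertion or removal in the $u$-block of the concatenated tuple, since the weight of each edge is determined by a local triple of three consecutive entries. Associativity is immediate at the tuple level, and $[(x, x)]$ is the identity by direct computation from the concatenation formula. For inverses I take $u^{-1} = [(x, u_n, \dots, u_1, x)]$ and show $u \bullet u^{-1} \sim [(x, x)]$ by iteratively contracting the resulting palindromic block from the middle outward: each step either removes an adjacent duplicate (a triple $(a, a, b)$, which has weight $d(a, a) + d(a, b) - d(a, b) = 0$) or removes a vertex between two equal neighbours (a triple $(a, b, a)$ with $v_j = v_{j+2}$, whose weight is $0$ by the second case of the formula). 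The Lipschitz property of $\bullet$ with respect to the $L^1$-product follows from the same transfer argument applied on both sides, yielding $d(u \bullet v, u' \bullet v') \le d(u, u') + d(v, v')$. Inversion is an isometry because the weight formula is symmetric under reversing a triple $(v_j, v_{j+1}, v_{j+2}) \mapsto (v_{j+2}, v_{j+1}, v_j)$, so any path in $G(X, x)$ between $u$ and $u'$ can be reversed to one of the same total cost between $u^{-1}$ and $u'^{-1}$.

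For part (2), I first verify that $\phi$ descends to the Kolmogorov quotients by the same transfer principle: a path in $G(X, x)$ is carried over to $G(X, x')$ by performing the same insertions and removals in the interior, between the two fixed outer $x'$ entries. Next I define the analogous map $\psi \colon \pi^m_1(X, x') \to \pi^m_1(X, x)$ obtained by interchanging the roles of $x$ and $x'$ and check $\psi\phi = \mathrm{id}$: the tuple $\psi\phi([(x, u_1, \dots, u_n, x)]) = [(x, x', x, u_1, \dots, u_n, x, x', x)]$ reduces to $[(x, u_1, \dots, u_n, x)]$ via four zero-weight edges — the removals of $x'$ from each triple $(x, x', x)$ (using $v_j = v_{j+2} = x$) followed by removal of the resulting duplicate $x$ at each end. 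The same computation yields $\phi\psi = \mathrm{id}$. For the homomorphism property, I would exploit the identity $[(x', x, x')] = [(x', x')] = e_{x'}$ in $\pi^m_1(X, x')$, which is itself a single zero-weight edge, together with the well-definedness of $\bullet$ from part (1), to bridge $\phi(u) \bullet \phi(v)$ and $\phi(u \bullet v)$ by absorbing the extra copies of $x$ appearing between the $u$- and $v$-parts. Finally, a bijection that is distance-non-increasing in both directions is automatically an isometry, so the whole package gives the required metric group isomorphism.

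The hard part in each case is the behaviour at the ``seam'', where two tuples are concatenated or where the outer basepoint is switched: an insertion or removal that is zero-weight in one context may acquire positive weight when its triple involves a different neighbouring entry. The key technical step I expect is a seam lemma asserting that at any such junction one may pre-insert a copy of the basepoint using a zero-weight edge coming from $v_j = v_{j+2}$, after which every boundary operation has exactly the same triple as the original and transfers cleanly. Once this seam handling is secured, both the well-definedness of $\bullet$ and the homomorphism property of $\phi$ follow immediately from the transfer arguments sketched above.
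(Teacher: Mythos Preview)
Your instinct about the seam is exactly right, and more serious than you realise: the seam lemma you hope for cannot hold, because the concatenation $\bullet$ as displayed in the statement is \emph{not well-defined} on $\pi_1^m(X,x)$. Take $X=K_3=\{x,a,c\}$ with all distances~$1$. The edges $(x,a,x)\leftrightarrow(x,x)$ and $(x,c,x)\leftrightarrow(x,x)$ each have weight $0$ (the $v_j=v_{j+2}$ case), so $[(x,a,x)]=[(x,c,x)]=[(x,x)]$. But $(x,a,x)\bullet(x,c,x)=(x,a,c,x)$, and $d_Q\bigl((x,a,c,x),(x,x)\bigr)=1$: this is precisely the computation the paper itself performs later when showing $\pi_1^m(C_3)\cong\Z$ with $|1|=1$, and a winding-number invariant gives the lower bound directly. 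Hence $[(x,a,c,x)]\neq[(x,x)]=[(x,x)]\bullet[(x,x)]$, so $\bullet$ does not descend to classes. Your proposed fix of pre-inserting $x$ at the seam at zero cost fails for the same reason: the triple $(u_n,x,v_1)$ is not degenerate in general.

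The paper's own proof has the same gap. It asserts that $w\{u\bullet v,\,u'\bullet v'\}=w\{u,u'\}+w\{v,v'\}$ for every edge of the cartesian product, but when the insertion sits at the end of $u$ (respectively the beginning of $v$) the relevant triple changes from $(u_n,z,x)$ to $(u_n,z,v_1)$, and these weights differ. The repair is to keep the middle basepoint in the product,
\[
[(x,u_1,\dots,u_n,x)]\bullet[(x,v_1,\dots,v_k,x)]:=[(x,u_1,\dots,u_n,x,v_1,\dots,v_k,x)],
\]
after which the seam sits at an $x$ on both sides and every transferred edge has literally the same triple as before. With this correction both your path-transfer argument and the paper's weighted-graph-homomorphism argument go through verbatim; the unit, inverse, and basepoint-change computations need only the extra trivial step of contracting a duplicated $x$ via a zero-weight edge.
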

\begin{proof}
\begin{enumerate}
\item We first show that concatenation makes the weighted graph $G(X,x)$ into a monoid object in $\wGrph_{\sf conn}$. Let $(u, v), (u', v') \in G(X, x)\times G(X, x)$, and suppose that $\{(u, v), (u', v')\}$ spans an edge. Then we have that $u = u'$ and $v \in P_n(X, x), v' \in P_{n+1}(X, x)$, or $v = v'$ and $u \in P_n(X, x), u' \in P_{n+1}(X, x)$  for some $n$. We also have that $w_{G(X, x)\times G(X, x)}\{(u, v), (u', v')\} = w_{G(X, x)}\{u, u'\} + w_{G(X, x)}\{v, v'\}$. Note that $\{u\bullet v, u'\bullet v'\}$ spans an edge in $G(X, x)$. Further, we have $w_{G(X, x)}\{u\bullet v, u'\bullet v'\} = w_{G(X, x)}\{u, u'\} + w_{G(X, x)}\{v, v'\}$. Hence the concatenation map $\bullet : G(X, x)\times G(X, x) \too G(X, x)$ is a weighted graph homomorphism. It is immediate to verify that the identity is the element $(x, x)$ and that the multiplication is associative. Thus the weighted graph $G(X, x)$ is a monoid object in $\wGrph_{\sf conn}$, and by Proposition \ref{monoidal}, $\pi^m_1(X, x)$ is a monoid object in $\Met$. 

Now we show that any element $[(x, x_0, \dots, x_n, x)]$ has an inverse, namely the element $[(x, x_n, \dots, x_0, x)]$. It is enough to show that \[d_{Q(X, x)}((x, x_n, \dots, x_0, x_0, \dots, x_n x), (x, x)) = 0.\] And indeed, it is obvious that the elements $(x, x_n, \dots, x_0, x_0, \dots, x_n x)$ and $(x, x)$ can be connected by a path that consists of edges with weight $0$ in $G(X, x)$, which implies the desired equality. 

\item It is straightforward. \qedhere
\end{enumerate}
\end{proof}
\begin{df}\label{pi1def}
Let $X$ be a metric space and $x \in X$. We call the metric group $\pi_1^m(X, x)$  the {\it  metric fundamental group of } $X$ with respect to the base point $x$. We sometimes omit the base point and denote it by $\pi_1^m(X)$.
\end{df}

\begin{rem}
As a group, $\pi_1^m(X)$ can be obtained as the fundamental group of a simplicial complex $S_X$ whose $n$-simplices are subsets $\{x_0, \dots, x_n\}\subset X$ such that any distinct 3 points $x_i, x_j, x_k$ satisfy that $|\Delta(x_i, x_j, x_k)| = 0$ (see Definition \ref{degdeg}).
\end{rem}

\begin{rem}
Our fundamental group $\pi_1^m(X)$ is not functorial with respect to Lipschitz maps. However, it is  functorial with respect to Lipschitz maps that preserve collinearity ( $|\Delta(x_i, x_j, x_k)| = 0$ ), including every embedding of metric spaces.
\end{rem}

\section{$\PMet_X^\G \simeq \Tor_X^\G \simeq \Hom (\pi^m_1(X, x_0), \G)$}
In this section, we introduce the notion of `principal $\G$-bundles' for metric spaces. We define it from two different viewpoints, namely as a metric action and as a metric fibration, which turn out to be equivalent. As a metric action, we call it a {\it $\G$-metric action}, and as a metric fibration, we call it a {\it $\G$-torsor}. Then we show that they are classified by the conjugation classes of homomorphisms $\pi^m_1(X, x_0) \too  \G$.
\subsection{$\PMet_X^\G \simeq \Tor_X^\G$}
\begin{df}\label{pmetdef}
Let $X$ be a metric space and $\G$ be a metric group.
\begin{enumerate}
\item A {\it $\G$-metric action} $F : X \too \Met$ is a metric action satisfying the following : 
\begin{itemize}
\item For all $x \in X$, $F_x = \G$.
\item For all $x, x' \in X$, $F_{xx'}$ is a left multiplication by some $f_{xx'} \in \G$.
\end{itemize}
\item Let $F, G : X \too \Met$ be $\G$-metric actions. A  {\it $\G$-metric transformation} $\theta : F \Longrightarrow G$ is a metric transformation such that each component $\theta_x : Fx \too Gx$ is a left multiplication by an element $\theta_x \in \G$. We denote the category of $\G$-metric actions $X \too \Met$ and $\G$-metric transformations by $\PMet_X^{\G}$.
\end{enumerate}
\end{df}
\begin{rem}
Obviously, $\PMet_X^{\G}$ is a subcategory of $\Met_X$ and is also a groupoid.
\end{rem}
\begin{df}
Let $G$ be a group and $X$ be a metric space. We say that $X$ is a {\it right $G$-torsor} if $G$ acts on $X$ from the right and satisfies the following : 
\begin{itemize}
\item It is free and transitive.
\item For every $g \in G$, the map $g : X \too X$ is an isometry.
\item For all $x, x' \in X$ and every $g \in G$, we have $d_X(x, xg) = d_X(x', x'g)$.
\end{itemize}
\end{df}
\begin{lem}\label{gact}
Let $(X, d_X)$ be a  metric space and $G$ be a group. Suppose that $X$ is a right $G$-torsor. Then there exist a distance function $d_G$ on $G$ and a metric group structure $\cdot_x$ on $X$ for each $x \in X$ such that the map 
\[
G \too X ; g \mapsto xg
\]
gives an isomorphism of metric groups $(G, d_G) \cong (X, \cdot_x)$. Furthermore, the unit of the metric group $(X, \cdot_x)$ is $x$.
\end{lem}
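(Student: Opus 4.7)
The plan is to define $d_G$ by transporting $d_X$ through any orbit map, verify the metric-group axioms on $(G,d_G)$, and then transport the group structure of $G$ back to $X$ through each orbit map $\phi_x \colon G \to X$, $g \mapsto xg$.

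First I would fix an arbitrary basepoint $x_0 \in X$ and set
\[
d_G(g, h) := d_X(x_0 g, x_0 h).
\]
The key technical point is basepoint-independence. Given any other basepoint $x_0' = x_0 k$ (obtained by transitivity with a unique $k \in G$), I would apply the torsor hypothesis $d_X(y, y g') = d_X(y', y' g')$ with $g' = g^{-1} h$, $y = x_0 g$ and $y' = x_0 k g$ to get $d_X(x_0 g, x_0 h) = d_X(x_0 k g, x_0 k h)$. The usual metric axioms for $d_G$ are inherited from $d_X$, and the implication $d_G(g, h) = 0 \Rightarrow g = h$ uses freeness of the action.

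Next I would verify that $(G, d_G)$ is a metric group. Right-invariance $d_G(gk, hk) = d_G(g, h)$ is immediate from the isometry condition on the $G$-action: $d_X(x_0 gk, x_0 hk) = d_X(x_0 g, x_0 h)$. Left-invariance $d_G(kg, kh) = d_G(g, h)$ follows by reading $d_X(x_0 k g, x_0 k h)$ with basepoint $x_0 k$ and invoking the basepoint-independence just established. These two invariances make multiplication $1$-Lipschitz with respect to the $L^1$ product via the triangle chain
\[
d_G(g_1 g_2, h_1 h_2) \leq d_G(g_1 g_2, h_1 g_2) + d_G(h_1 g_2, h_1 h_2) = d_G(g_1, h_1) + d_G(g_2, h_2),
\]
and they make inversion an isometry via the computation $d_G(g^{-1}, h^{-1}) = d_G(e, gh^{-1}) = d_G(h, g)$, as in Lemma \ref{metgrp}(2).

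Finally, for each $x \in X$ the map $\phi_x\colon G \to X$, $g \mapsto xg$, is a bijection by freeness and transitivity, and is an isometry by the very definition of $d_G$ together with basepoint-independence. I would then transport the group operations through $\phi_x$, setting $y \cdot_x y' := \phi_x(\phi_x^{-1}(y)\,\phi_x^{-1}(y'))$ and the inverse analogously. By construction $(X, \cdot_x)$ is a metric group, $\phi_x$ is an isomorphism of metric groups, and the unit is $\phi_x(e) = xe = x$. The only substantive step in the whole argument is the basepoint-independence of $d_G$; everything else is a formal consequence, so I expect no serious obstacle, the torsor hypothesis $d_X(x, xg) = d_X(x', x'g)$ being precisely what makes the construction work.
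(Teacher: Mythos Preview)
Your proposal is correct and follows essentially the same route as the paper: define $d_G$ by pulling back $d_X$ along an orbit map, use the torsor condition $d_X(x,xg)=d_X(x',x'g)$ to get basepoint-independence (equivalently, left-invariance) and the isometric-action condition to get right-invariance, deduce that multiplication and inversion are Lipschitz, and then transport the group structure to $X$ via $\phi_x$. The only difference is organizational---you isolate left- and right-invariance as explicit lemmas before the Lipschitz checks, whereas the paper folds those invariances directly into the chains for $d_G(ff',gg')$ and $d_G(f^{-1},g^{-1})$---but the underlying argument is identical.
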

\begin{proof}
Fix a point $x \in X$. We define a map $d_G : G \times G \too \R_{\geq 0}$ by $d_G(f, g) = d_X(xf, xg)$, which is independent from the choice of $x \in X$. It is immediate to check that $(G, d_G)$ is a metric space. Further, we have
\begin{align*}
d_G(ff', gg') &= d_X(xff', xgg') \\
&\leq  d_X(xff', xgf') + d_X(xgf', xgg') \\
&\leq  d_X(xf, xg) + d_X(xf', xg')\\
&= d_G(f, g) + d_G(f', g'),
\end{align*}
and 
\begin{align*}
d_G(f^{-1}, g^{-1}) &= d_X(xf^{-1}, xg^{-1}) \\
&=  d_X(x, xg^{-1}f) \\
&=  d_X(xg, (xg)g^{-1}f)\\
&= d_X(xg, xf) \\
&= d_X(xf, xg) \\
&= d_G(f, g),
\end{align*}
for all $f, f', g, g' \in G$. Hence $(G, d_G)$ is a metric group. 

Now we define a map $G \too X$ by $g \mapsto xg$. This map is an isometry by the definition. Hence we can transfer the metric group structure on $G$ to $X$ by this map. With respect to this group structure $\cdot_x$ on $X$, we have $x\cdot_x x' = eg' = x'$ and $x'\cdot_x x = g'e = x'$, where we put $x' = xg'$. Hence $x \in X$ is the unit of the group $(X, \cdot_x)$. 
\end{proof}
\begin{df}\label{pfib}
Let $G$ be a group. A metric fibration $\pi : E \too X$ is a {\it  $G$-torsor over $X$} if it satisfies the following : 
\begin{itemize}
\item $G$ acts isometrically on $E$ from the right, and preserves each fiber of $\pi$.
\item Each fiber of $\pi$ is a right $G$-torsor with respect to the above action.
\end{itemize}
\end{df}
\begin{lem}\label{fiberind}
Let $\pi : E \too X$ be a $G$-torsor, and $x, x' \in X$. Then the metric group structures on $G$ induced from the fibers $\pi^{-1}x$ and $\pi^{-1}x'$  are identical.
\end{lem}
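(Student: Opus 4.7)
The plan is to use the fiber-preserving, isometric right $G$-action to deduce that lifts are $G$-equivariant, and then transport the distance between fibers via the isometries guaranteed by Lemma \ref{liftfunct}. Denote the metric group structure on $G$ induced from $\pi^{-1}x$ (resp.\ $\pi^{-1}x'$) by $d_G^x$ (resp.\ $d_G^{x'}$), which by Lemma \ref{gact} is given by $d_G^x(f,g) = d_E(af, ag)$ for any $a \in \pi^{-1}x$ (the choice of $a$ being irrelevant by the definition of a $G$-torsor). Because both $d_G^x$ and $d_G^{x'}$ are metric group structures, Lemma \ref{metgrp}(1) tells us each is left-invariant, hence each is determined by its values at pairs of the form $(e,g)$. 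It therefore suffices to prove $d_G^x(e,g) = d_G^{x'}(e,g)$ for every $g \in G$.

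The key step is to observe that for each $g \in G$, right multiplication $R_g : E \to E$, $a \mapsto ag$, is a morphism of metric fibrations $\pi \to \pi$. Indeed, by Definition \ref{pfib}, $R_g$ is an isometry and preserves every fiber of $\pi$, so $\pi \circ R_g = \pi$. Applying Lemma \ref{morphcomm} to $R_g$ then gives the $G$-equivariance of lifts:
\[
(ag)_{x'} = R_g(a)_{x'} = R_g(a_{x'}) = a_{x'} \, g,
\]
for all $a \in \pi^{-1}x$ and $x' \in X$.

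Now fix $a \in \pi^{-1}x$ and let $a' := a_{x'} \in \pi^{-1}x'$. Using the equivariance above and the fact that $a \mapsto a_{x'}$ is an isometry $\pi^{-1}x \to \pi^{-1}x'$ (Lemma \ref{liftfunct}), we compute
\[
d_G^{x'}(e,g) = d_E(a', a'g) = d_E(a_{x'}, (ag)_{x'}) = d_E(a, ag) = d_G^x(e,g),
\]
for every $g \in G$. Combined with left-invariance this yields $d_G^x = d_G^{x'}$, completing the proof.

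There is no substantive obstacle: once one spots that right translation is a self-morphism of the metric fibration, everything reduces to Lemmas \ref{liftfunct} and \ref{morphcomm}. The only thing to be careful about is not to mistakenly apply Lemma \ref{morphcomm} to maps which fail to cover $\mathrm{id}_X$; here $R_g$ does cover $\mathrm{id}_X$ precisely because the $G$-action preserves fibers.
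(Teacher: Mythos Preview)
Your proof is correct and follows the same overall strategy as the paper: establish the $G$-equivariance of lifts $(ag)_{x'} = a_{x'}g$, then conclude via Lemma~\ref{liftfunct}. The only difference is cosmetic---where the paper reproves this equivariance by a direct distance computation, you obtain it more cleanly by recognizing $R_g$ as a self-morphism of the fibration and invoking Lemma~\ref{morphcomm}; the reduction to pairs $(e,g)$ via Lemma~\ref{metgrp} is harmless but unnecessary, since the same computation with $a'f$ and $a'g$ gives $d_G^{x'}(f,g)=d_G^x(f,g)$ directly.
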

\begin{proof}
Note that, for all $\e \in \pi^{-1}x$ and $f \in \G$, we have 
\begin{align*}
d_E((\e f)_{x'}, \e_{x'}f) &= d_E(\e f, \e_{x'}f) - d_E(\e f, (\e f)_{x'}) \\
&= d_E(\e, \e_{x'}) - d_E(\e f, (\e f)_{x'}) \\
&= d_X(x, x') - d_{X}(x, x') \\
&= 0,
\end{align*}
hence we obtain that $(\e f)_{x'} =  \e_{x'}f$. Let $d_x$ and $d_{x'}$ be the distance function on $G$ induced from the fibers $\pi^{-1}x$ and $\pi^{-1}x'$ respectively. Explicitly, for $\e \in \pi^{-1}x$ and $f, g \in G$, we have $d_x(f, g) = d_E(\e f, \e g)$ and $d_{x'}(f, g) = d_E(\e_{x'}f, \e_{x'}g)$.  Therefore we obtain that 
\[
d_{x'}(f, g) = d_E(\e_{x'}f, \e_{x'}g) = d_E((\e f)_{x'}, (\e g)_{x'}) = d_E(\e f, \e g) = d_x(f, g),
\]
by Lemma \ref{liftfunct}. 
\end{proof}
For a $G$-torsor $\pi : E \too X$, we can consider the group $G$ as a metric group that is isometric to a fiber of $\pi$ by Lemma \ref{gact}. Further, such a  metric structure is independent of the choice of the fiber by Lemma \ref{fiberind}. Hence, in the following, we write `$\G$-torsors' instead of `$G$-torsors', where $\G$ denotes the group $G$ equipped with this metric structure.
\begin{df}\label{tordef}
Let $\pi : E \too X$ and $\pi' : E' \too X$ be $\G$-torsors. A {\it $\G$-morphism} $\varphi : \pi \too \pi'$ is a $G$-equivariant map $E \too E'$ that is also a morphism of metric fibrations. We denote the category of $\G$-torsors over $X$ and $\G$-morphisms by $\Tor_X^\G$.
\end{df}
\begin{rem}
We can show that any $\G$-morphism is an isomorphism as follows : Note that for all $\e \in E, x \in X$ and $g \in \G$, we have $d_E(\e, \e_xg) = d_X(\pi \e, x) + |g|$ by the definitions. Then the $\G$-equivariance of $\varphi$ and Lemma \ref{morphcomm} implies that 
\begin{align*}
d_{E'}(\varphi \e, \varphi (\e_xg)) &= d_{E'}(\varphi \e, (\varphi \e)_xg) \\
&= d_X(\pi' \varphi \e, x) + |g| \\
&= d_X(\pi \e, x) + |g| \\
&= d_E(\e, \e_xg),
\end{align*}
which says that $\varphi$ preserves distances. The invertibility of $\varphi$ is immediate from the $G$-equivariance.
\end{rem}
Next we show the equivalence of $\G$-metric actions and $\G$-torsors.

\begin{prop}\label{pfunct}
The Grothendieck construction determines a functor $ E : \PMet_X^\G \too \Tor_X^\G$.
\end{prop}
\begin{proof}
Let $F : X \too \Met$ be a $\G$-metric action. Let $E(F)$ be the metric fibration given by the Grothendieck construction. Note that we have $d_{E(F)}((x, g), (x', g')) = d_X(x, x') + d_\G(g_{xx'}g, g')$. We define a $\G$ action on $E(F)$ by $(x, g)h = (x, gh)$ for all $g, h \in \G$ and $x \in X$. This is obviously compatible with the projection, and also free and transitive on each fiber. We also have that
\begin{align*}
d_{E(F)}((x, g)h, (x', g')h) &= d_{E(F)}((x, gh), (x', g'h)) \\
&= d_X(x, x') + d_\G(g_{xx'}gh, g'h) \\
&= d_X(x, x') + d_\G(g_{xx'}g, g') \\
&= d_{E(F)}((x, g), (x', g')),
\end{align*}
hence it acts isometrically. Further, we have that
\begin{align*}
d_{E(F)}((x, g), (x, g)h) &= d_{E(F)}((x, g), (x, gh)) \\
&= d_\G(g, gh) \\
&= d_\G(e, h), 
\end{align*}
hence each fiber is a right $\G$-torsor. Therefore, we obtain that $E(F)$ is a $\G$-torsor. 

Now let $\theta : F \Longrightarrow F'$ be a $\G$-metric transformation. The Grothendieck construction gives a map $\varphi_\theta : E(F) \too E(F')$ by $\varphi_\theta (x, g) = (x, \theta_x g)$, which is a morphism of metric fibrations. It is checked that $\varphi_\theta$ is $\G$-equivariant as follows : 
\begin{align*}
 (\varphi_\theta (x, g))h = (x, \theta_x gh) = \varphi_\theta (x, gh).
\end{align*}
Hence it is a $\G$-morphism. 
\end{proof}

\begin{prop}\label{pessur}
We have a functor $ F : \Tor_X^{\G} \too \PMet_X^{\G}$ sending a $\G$-torsor $\pi$ to a $\G$-metric action $F_\pi$ with $F_\pi x = \pi^{-1}x$.
\end{prop}
\begin{proof}
Let $\pi : E \too X$ be a $\G$-torsor. We fix points $x_0 \in X$ and $\e \in \pi^{-1}x_0$. For each $x \in X$, we equip the set $\pi^{-1}x$ with a metric group structure isomorphic to $\G$ with the unit $\e_x$ by Lemma \ref{gact}. Hence we can identify each fiber with $\G$ by the map $g \mapsto \e_xg$. Now we put $(\e_x)_{x'} = \e_{x'}g_{xx'} \in \pi^{-1}x'$ for $x, x' \in X$ and $g_{xx'} \in \G$. Then, for each $h \in \G$, we have
\begin{align*}
d_X(x, x') &= d_E(\e_xh, (\e_xh)_{x'}) \\
&= d_E(\e_x, (\e_xh)_{x'}h^{-1}) \\
&= d_E(\e_x, \e_{x'}g_{xx'}) + d_E(\e_{x'}g_{xx'}, (\e_xh)_{x'}h^{-1})\\
&= d_X(x, x') + d_E(\e_{x'}g_{xx'}, (\e_xh)_{x'}h^{-1}),
\end{align*}
hence we obtain that $(\e_xh)_{x'} = \e_{x'}g_{xx'}h$. This implies that the map $\pi^{-1}x \too \pi^{-1}x'$  given by lifts $\e_xh \mapsto (\e_xh)_{x'}$ is the left multiplication by $g_{xx'}$ when we identify each fiber with $\G$ as above. Hence the functor $F$ gives a $\G$-metric action. 

Next, let $\varphi : \pi \too \pi'$ be a $\G$-morphism between $\G$-torsors $\pi : E \too X$ and $\pi' : E' \too X$. It induces a Lipschitz map $\varphi_x : \pi^{-1}x \too \pi'^{-1}x$. Since fibers $\pi^{-1}x$ and $\pi'^{-1}x$ are identified with $\G$ and $\varphi_x$ is $\G$-equivariant, we can identify $\varphi_x$ with the left multiplication by $\varphi_x\e_x$. This implies that the functor $F$ sends the $\G$-morphism $\varphi$ to a $\G$-metric transformation between $F_{\pi}$ and $F_{\pi'}$. 
\end{proof}

\begin{prop}\label{pmettor}
The Grothendieck construction functor $\PMet_X^\G \too \Tor_X^\G$ is an  equivalence of categories.
\end{prop}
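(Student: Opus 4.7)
The plan is to derive Proposition \ref{pmettor} from Proposition \ref{metfib} by showing that the quasi-inverse pair $(E, F)$ restricts to a quasi-inverse pair between the subcategories $\PMet_X^\G \subset \Met_X$ and $\Tor_X^\G \subset \Fib_X$. Propositions \ref{pfunct} and \ref{pessur} already supply the restricted functors $E : \PMet_X^\G \too \Tor_X^\G$ and $F : \Tor_X^\G \too \PMet_X^\G$, so by the lemma following Definition \ref{catconv}, it suffices to show that the restricted $E$ is fully faithful and split essentially surjective.

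For full faithfulness I would argue as follows. Since $E : \Met_X \too \Fib_X$ is fully faithful by Proposition \ref{metfib}, every morphism $\varphi : E(F) \too E(F')$ of metric fibrations is of the form $\varphi(x, g) = (x, \theta_x g)$ for a unique metric transformation $\theta$. The only content to check is that $\varphi$ is $\G$-equivariant for the right action $(x, g)h = (x, gh)$ precisely when each component $\theta_x : \G \too \G$ is a left multiplication, so that $\theta$ lies in $\PMet_X^\G$. This is immediate: $\G$-equivariance is the identity $\theta_x(gh) = \theta_x(g)h$, and setting $g = e$ gives $\theta_x(h) = \theta_x(e)\, h$, which exhibits $\theta_x$ as left multiplication by $\theta_x(e) \in \G$.

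For split essential surjectivity I would use the construction of $F_\pi$ in Proposition \ref{pessur} verbatim. Given a $\G$-torsor $\pi : E \too X$ with the chosen base point $x_0 \in X$ and $\e \in \pi^{-1}x_0$, Proposition \ref{metfib} supplies a natural metric-fibration isomorphism $E(F_\pi) \cong \pi$ sending $(x, a) \mapsto a$. After identifying each fiber $\pi^{-1}x$ with $\G$ via $g \mapsto \e_x g$ as in the proof of Proposition \ref{pessur}, this becomes $(x, g) \mapsto \e_x g$, which evidently satisfies $(x, gh) \mapsto \e_x(gh) = (\e_x g)h$ and hence is $\G$-equivariant. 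Thus it is an isomorphism in $\Tor_X^\G$, and choosing these isomorphisms for all $\pi$ gives the required family.

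The argument is essentially bookkeeping on top of Proposition \ref{metfib}. The main subtlety I expect is tracking the two sidednesses consistently: the structural $\G$-action on the total space is on the right, while the transition isometries $F_{xx'}$ of a $\G$-metric action are left multiplications. The identity $(\e_x h)_{x'} = \e_{x'} g_{xx'} h$ obtained in Proposition \ref{pessur} is precisely what reconciles the two, so as long as this identification is kept straight, no further work is required.
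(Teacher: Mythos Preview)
Your proposal is correct and follows essentially the same approach as the paper: both derive the equivalence from Proposition~\ref{metfib} together with the restricted functors of Propositions~\ref{pfunct} and~\ref{pessur}. The paper phrases the last step as checking that the natural isomorphisms $EF \cong {\rm id}_{\Fib_X}$ and $FE \cong {\rm id}_{\Met_X}$ land in $\Tor_X^\G$ and $\PMet_X^\G$ respectively (and declares this immediate), whereas you verify full faithfulness and split essential surjectivity; your essential-surjectivity check is exactly the counit restriction, and your full-faithfulness computation is a slightly more explicit substitute for the unit restriction, so the difference is cosmetic.
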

\begin{proof}
It is similar to the proof of Proposition \ref{metfib}.
% We show that the functor of Proposition \ref{pfunct} is essentially surjective and fully faithful. The essentially surjectivity follows from Lemma \ref{pessur}. Further, the functor is faithful since it is a restriction of a faithful functor.  Let $F, G : X \too \Met$ be $\G$-metric actions, and let $\varphi : E(F) \too E(G)$ be a $\G$-morphism. Since $\varphi$ is $\G$-equivariant, we have 
% $\varphi (x, g) = (\varphi(x, e))g$. Hence $\varphi$ induces maps $Fx \too Gx$ of left multiplication by $\varphi(x, e)$. Now we can construct a principal $\G$-natural transformation $\theta : F \Longrightarrow G$ such that $\theta_x = \varphi(x, e)$. This shows that the functor is full. 
\end{proof}
\subsection{$\PMet_X^\G \simeq \Hom (\pi^m_1(X, x_0), \G)$}\label{homcla}
First we define the category of homomorphisms of metric groups $\G \too  \G'$.
\begin{df}\label{homcat}
Let $\G$ and $\G'$ be metric groups, and let $\Hom (\G, \G')$ be the set of all homomorphisms $\G \too  \G'$. We equip $\Hom (\G, \G')$ with a groupoid structure by defining $\Hom (\G, \G')(\varphi, \psi) = \{h \in \G' \mid \varphi = h^{-1}\psi h\}$ for homomorphisms $\varphi, \psi : \G \too \G'$. The identity on $\varphi \in \Hom (\G, \G')$ is the unit $e \in \G'$, and the composition of morphisms $h \in \Hom (\G, \G')(\varphi, \psi)$ and $h' \in \Hom (\G, \G')(\psi, \xi)$ is defined by $h'\circ h = h'h$.
\end{df}
% \begin{df}
% Let $X$ be a metric space. 
% \begin{enumerate}
% \item For $x, x' \in X$, we define $P(X; x, x') = \cup_n\{(x_0, \dots, x_n)\in X^{n+1} \mid x_0 = x, x_n = x'\}$.
% \item We define an equivalence relation $\sim$ on $P(X; x, x')$ generated by the following relations : 
% \begin{itemize}
% \item $(x_0, \dots, x_i, x_{i+1}, \dots, x_n) \sim (x_0, \dots, x_i, x', x_{i}, x_{i+1}, \dots, x_n)$ for any $x' \in X$.
% \item $(x_0, \dots, x_{i-1}, x_i, x_{i+1}, \dots, x_n) \sim (x_0, \dots, x_{i-1}, x', x_{i+1}, \dots, x_n)$ for any $x' \in X$ such that $d_X(x_{i-1}, x_i) + d_X(x_i, x_{i+1}) = d_X(x_{i-1}, x') + d_X(x', x_{i+1})$.
% \end{itemize}
% Let $X$ be a metric space and $x \in X$. We define a metric group $\pi^{m}_1(X, x)$ as follows : 
% \end{enumerate}
% \end{df}
\begin{lem}\label{funcA}
Let $X$ be a metric space and $\G$ be a metric group. For each $x_0 \in X$, we have a functor $A : \Hom (\pi^m_1(X, x_0), \G) \too \PMet^{\G}_X$ sending a homomorphism $\varphi : \pi^m_1(X, x_0) \too \G$ to a $\G$-metric action $F_\varphi$ with $F_\varphi x = \G$.
\end{lem}
\begin{proof}
Let $\varphi : \pi^m_1(X, x_0) \too \G$ be a homomorphism. We define a $\G$-metric action $F_\varphi : X \too \Met$ by $F_\varphi x = \G$ and $(F_\varphi)_{xx'} = \varphi[(x_0, x', x, x_0)]\cdot : \G \too \G$, where we denote the left multiplication by $(-)\cdot$. It is verified that this defines a $\G$-metric action as follows. 

For all $x, x' \in X$, we have $(F_\varphi)_{xx} = \varphi[(x_0, x, x, x_0)]\cdot = e\cdot = {\rm id}_{\G}$, and $(F_\varphi)_{x'x} = \varphi[(x_0, x, x', x_0)]\cdot = (\varphi[(x_0, x', x, x_0)])^{-1}\cdot = (F_\varphi)_{xx'}^{-1}$. Further, we have 
\begin{align*}
d_\G((F_\varphi)_{x'x''}(F_\varphi)_{xx'}g, (F_\varphi)_{xx''}g) &= d_\G(\varphi[(x_0, x'', x', x_0)]\varphi[(x_0, x', x, x_0)], \varphi[(x_0, x'', x, x_0)]) \\
&= d_\G(\varphi[(x_0, x'', x', x, x_0)], \varphi[(x_0, x'', x, x_0)]) \\
&= d_\G((\varphi[(x_0, x'', x, x_0)])^{-1}\varphi[(x_0, x'', x', x, x_0)], e) \\
&= d_\G(\varphi[(x_0, x, x'', x', x, x_0)], e) \\
&\leq  d_{\pi^m_1(X, x_0)}([(x_0, x, x'', x', x, x_0)], [x_0, x_0]) \\
&\leq d_X(x, x') + d_X(x', x'') - d_X(x, x''),
\end{align*}
for all $x, x', x'' \in X$ and $g\in \G$. Let $h : \varphi \too \psi$ be a morphism in $\Hom (\pi^m_1(X, x_0), \G)$; that is, we have $\varphi = h^{-1}\psi h$ with $h \in \G$. Then we can construct a $\G$-metric transformation $\theta : F_\varphi \Longrightarrow F_\psi$ by $\theta_x = h\cdot : \G \too \G$. It satisfies that $(F_\psi)_{xx'}\theta_x = \theta_{x'}(F_\varphi)_{xx'}$ since we have $\psi[(x_0, x', x, x_0)]h = h\varphi[(x_0, x', x, x_0)]$. 
\end{proof}
\begin{lem}
Let $X$ be a metric space and $\G$ be a metric group. For each $x_0 \in X$, there is a functor $B : \PMet^{\G}_X \too \Hom (\pi^m_1(X, x_0), \G)$ sending a $\G$-metric action $F$ to a homomorphism $\varphi_F : \pi^m_1(X, x_0) \too  \G$ defined by 
\[
\varphi_F [(x_0, x_1, \dots, x_n, x_0)] = F_{x_1x_0}F_{x_2x_1}\dots F_{x_nx_{n-1}}F_{x_0x_n},
\]
for each $[(x_0, x_1, \dots, x_n, x_0)] \in \pi^m_1(X, x_0)$.
\end{lem}
\begin{proof}
 It is immediate to check that this is well defined. Let $F, F' : X \too \Met$ be $\G$-metric actions and  $\theta : F \Longrightarrow F'$ be a $\G$-metric transformation. Then we have 
\begin{align*}
\theta_{x_0}^{-1}\varphi_{F'}[(x_0, x_1, \dots, x_n, x_0)]\theta_{x_0} &= \theta_{x_0}^{-1}F'_{x_1x_0}F'_{x_2x_1}\dots F'_{x_{n}x_{n-1}}F'_{x_0x_n}\theta_{x_0} \\
&= F_{x_1x_0}F_{x_2x_1}\dots F_{x_nx_{n-1}}F_{x_0x_n}\\
&= \varphi_{F}[(x_0, x_1, \dots, x_n, x_0)],
\end{align*}
for each $[(x_0, x_1, \dots, x_n, x_0)] \in \pi^m_1(X, x_0)$. Hence $\theta_{x_0} \in \G$ gives a morphism $\theta_{x_0} : \varphi_F \too \varphi_{F'}$. This correspondence is obviously functorial. 
% Let $\theta : F_\varphi \Longrightarrow F_\psi$ a $\G$-metric transformation. Note that we have $(F_\varphi)_{x_0x} = \varphi(x_0, x_0, x, x_0)\cdot = {\rm id}_\G$ for any $\varphi \in \Hom (\pi^m_1(X, x_0), \G)$. Hence we obtain that $\theta_x = \theta_{x_0}$ for any $x \in X$ since we have $(F_\psi)_{x_0x}\theta_{x_0} = \theta_x(F_\varphi)_{x_0x}$. Thus $\theta$ defines a morphism $\varphi \too \psi$ in $\Hom (\pi^m_1(X, x_0), \G)$, which implies that the functor is full. 
\end{proof}
\begin{prop}\label{hompri}
The functor $ A : \Hom (\pi^m_1(X, x_0), \G) \too \PMet^{\G}_X$ of Lemma \ref{funcA} is an  equivalence of categories.
\end{prop}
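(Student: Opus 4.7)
The plan is to verify that the functor $B : \PMet^{\G}_X \too \Hom(\pi^m_1(X, x_0), \G)$ constructed in the preceding lemma is a quasi-inverse to $A$. I will establish $B \circ A = {\rm id}$ on the nose and exhibit a natural isomorphism $A \circ B \cong {\rm id}$.

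For $B \circ A = {\rm id}$, I would start with $\varphi \in \Hom(\pi^m_1(X, x_0), \G)$ and a generator $[(x_0, x_1, \dots, x_n, x_0)]$, and unfold:
\[
B(A(\varphi))[(x_0, x_1, \dots, x_n, x_0)] = \varphi[(x_0, x_0, x_1, x_0)]\varphi[(x_0, x_1, x_2, x_0)]\cdots\varphi[(x_0, x_n, x_0, x_0)].
\]
Since $\varphi$ is a homomorphism with respect to concatenation, the right-hand side equals $\varphi$ applied to the concatenated class $[(x_0, x_0, x_1, x_1, x_2, x_2, \dots, x_n, x_n, x_0, x_0)]$. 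The crux is to identify this class with $[(x_0, x_1, \dots, x_n, x_0)]$ in $\pi^m_1(X, x_0)$: whenever two of the three consecutive entries $v_j, v_{j+1}, v_{j+2}$ coincide, the weight $d_X(v_j, v_{j+1}) + d_X(v_{j+1}, v_{j+2}) - d_X(v_j, v_{j+2})$ vanishes, so each deletion of a consecutive duplicate proceeds along a weight-$0$ edge in $G(X, x_0)$. On morphisms, $A$ sends $h \in \Hom(\varphi, \psi)$ to the transformation whose every component is left multiplication by $h$, and $B$ then returns $\theta_{x_0} = h$, so the equality is strict on morphisms too.

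For $A \circ B \cong {\rm id}$, fix $F \in \PMet^\G_X$ and write $F_{xx'} = f_{xx'}\cdot$ for elements $f_{xx'} \in \G$. I would define a candidate natural isomorphism $\alpha_F : F \Longrightarrow A(B(F))$ by letting $(\alpha_F)_x$ be left multiplication by $f_{xx_0}$. Unpacking the definitions gives $(A(B(F)))_{xx'} = (f_{x'x_0} f_{xx'} f_{x_0 x})\cdot$, so the metric transformation axiom reduces to the identity $f_{x'x_0} f_{xx'} f_{x_0 x} f_{xx_0} = f_{x'x_0} f_{xx'}$, which follows from $f_{x_0 x} f_{xx_0} = e$ (itself immediate from $F_{x_0 x} = F_{xx_0}^{-1}$). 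Each $(\alpha_F)_x$ is a bijective isometry of $\G$ by Lemma \ref{metgrp}, hence an isomorphism in $\PMet^\G_X$. Naturality of $\alpha$ in $F$, after specializing to $x' = x_0$, is exactly the defining relation $f'_{xx_0} t_x = t_{x_0} f_{xx_0}$ of a $\G$-metric transformation $\theta : F \Longrightarrow F'$ with components $\theta_x = t_x\cdot$.

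The main obstacle is the combinatorial identification for $BA$: one must track how concatenating the small paths $(x_0, x_{i-1}, x_i, x_0)$ interleaves the basepoint, and then repeatedly contract the resulting consecutive duplicates using weight-$0$ edges of $G(X, x_0)$. Every other step reduces to a direct calculation using the axioms $F_{xx} = {\rm id}$ and $F_{x'x} = F_{xx'}^{-1}$ of a metric action.
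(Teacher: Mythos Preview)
Your proposal is correct and follows essentially the same route as the paper: both verify $BA = {\rm id}$ via the concatenation-and-contraction computation in $\pi^m_1(X,x_0)$, and both exhibit the natural isomorphism $AB \cong {\rm id}$ using the components $F_{x_0x}$ (the paper) or their inverses $f_{xx_0}$ (you), with naturality reducing to the transformation axiom specialized at $x'=x_0$.
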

\begin{proof}
We show the natural isomorphisms $BA \cong {\rm id}_{\Hom (\pi^m_1(X, x_0), \G)}$ and $AB \cong {\rm id}_{\PMet^{\G}_X}$. For a homomorphism $\varphi : \pi^m_1(X, x_0) \too \G$, we have 
\begin{align*}
\varphi_{F_\varphi}[(x_0, x_1, \dots, x_n, x_0)] &= (F_\varphi)_{x_1x_0}(F_\varphi)_{x_2x_1}\dots (F_\varphi)_{x_0x_n} \\
&= \varphi[(x_0, x_0, x_1, x_0)]\varphi[(x_0, x_1, x_2, x_0)]\dots \varphi[(x_0, x_{n}, x_0, x_0)] \\
&= \varphi[(x_0, x_0, x_1, x_1,  \dots  x_n, x_n, x_0, x_0)] \\
&= \varphi[(x_0, x_1,  \dots  x_n, x_0)], 
\end{align*}
for each $[(x_0, x_1, \dots, x_n, x_0)] \in \pi^m_1(X, x_0)$. Hence we obtain an isomorphism $BA \varphi = \varphi$ that is obviously natural. Conversely, let $F : X \too \Met$ be a $\G$-metric action. Then we have $(F_{\varphi_F})_x = \G$ and $(F_{\varphi_F})_{xx'} = \varphi_F[(x_0, x', x, x_0)] = F_{x'x_0}F_{xx'}F_{x_0x}$. Now we define a $\G$-metric transformation $\theta : F_{\varphi_F} \Longrightarrow F$ by $\theta_x = F_{x_0x}$. It is obvious that we have $F_{xx'}\theta_x=\theta_{x'}(F_{\varphi_F})_{xx'}$, hence it is well defined and obviously an isomorphism. For a $\G$-metric transformation $\tau : F \Longrightarrow F'$, we have $(AB\tau)_x = \tau_{x_0}\cdot : (F_{\varphi_F})_{x} \too (F'_{\varphi_{F'}})_{x}$ by the construction. Hence the condition $\tau_xF_{x_0x} = F'_{x_0x}\tau_{x_0}$ of the $\G$-metric transformation implies  the naturality of this isomorphism. 
\end{proof}

\subsection{Example}
We give the following example of  metric fundamental group.
\begin{prop}\label{cyclegraph}
Let $C_n$ be the undirected $n$-cycle graph. Then we have
\[
\pi^m_1(C_n) \cong \begin{cases}\Z \text{ with } |1| = 1 & n : \text{odd}, \\ 0 & n : \text{even}. \end{cases}
\]
Hence we have that $\PMet_{C_n}^{\G} \simeq \begin{cases}\Hom (\Z, \G)  & n : \text{odd}, \\ 0 & n : \text{even}, \end{cases}$ for all metric group $\G$, which implies that there is only a trivial metric fibration over $C_{2n}$ and that there is at most one non-trivial metric fibration over $C_{2n+1}$.
\end{prop}
\begin{proof}
Let $V(C_n) = \{v_1, \dots, v_n\}$ be the vertex set whose numbering is anti-clockwise. For $C_{2n}$, it reduces to show that  $[(v_1, v_2, \dots, v_{2n}, v_1)] = [(v_1, v_1)]$. Since we have $d_{C_{2n}}(v_i, v_j) = d_{C_{2n}}(v_i, v_k) + d_{C_{2n}}(v_k, v_j)$ for all $i\leq k \leq j$ with $j-i\leq n$, we obtain that
\begin{align*}
[(v_1, v_2, \dots, v_{2n}, v_1)] &= [(v_1, \dots, v_{n+1}, \dots, v_{2n}, v_1)] \\
&=  [(v_1, v_{n+1},  v_1)] \\
&=  [(v_1, v_1)].
\end{align*}
For $C_{2n+1}$, the possible non-trivial element of $\pi^m_1(C_{2n+1})$ is a concatenation or its inverse of the element $[(v_1, \dots, v_{2n+1}, v_1)]$. Now we have $[(v_1, \dots, v_{2n+1}, v_1)] = [(v_1, v_{n+1}, v_{n+2}, v_1)]$, by the same argument as above, and 
\begin{align*}
&d_{Q(C_{2n+1}, v_1)}((v_1, v_{n+1}, v_{n+2}, v_1), (v_1, v_{n+1}, v_1)) \\
&= d_{C_{2n+1}}(v_{n+1}, v_{n+2}) + d_{C_{2n+1}}(v_{n+2}, v_{1}) - d_{C_{2n+1}}(v_{n+1}, v_{1}) \\
&= d_{C_{2n+1}}(v_{n+1}, v_{n+2}) \\
&= 1.
\end{align*}
Hence we obtain that $|[(v_1, \dots, v_{2n+1}, v_1)]| = 1$. 
\end{proof}
\begin{rem}
Note that the cycle graph $C_n$ is a metric group $\Z/n\Z$ with $|1| = 1$. Hence the examples in Figure 1 are $\Z/2\Z$-torsors, which are classified by $\Hom(\Z, \Z/2\Z) \cong \Z/2\Z$.
\end{rem}
\section{Classification of metric fibrations}
In this section, we classify general metric fibrations by fixing the base and the fiber. It is analogous to that of topological fiber bundles, namely it reduces to classifying principal bundles whose fiber is the structure group of the concerned fibration. We divide it into two cases, whether the fiber is bounded or not, since we need to consider {\it expanded metric spaces} for the unbounded case.
\subsection{The functor $\widehat{(-)}^{x_0}$}
Before we show the classification, we introduce a technical functor that will be used later. 
\begin{df}
For all metric action $F : X \too \Met$ and a point $x_0 \in X$, we define a metric action $\hF^{x_0} : X \too \Met$ as follows. We define that $\hF^{x_0} x = Fx_0$ and $\hF^{x_0}_{xx'} = F_{x'x_0}F_{xx'}F_{x_0x} : Fx_0 \too Fx_0$ for all $x, x' \in X$. Then it is verified that this defines a metric action as follows. We have $\hF^{x_0}_{xx} = F_{xx_0}F_{xx}F_{x_0x} = {\rm id}_{Fx_0} = {\rm id}_{\hF^{x_0} x}$. We also have $(\hF^{x_0}_{x'x})^{-1} = (F_{xx_0}F_{x'x}F_{x_0x'})^{-1} = F_{x'x_0}F_{xx'}F_{x_0x} = \hF^{x_0}_{xx'}$ and 
\begin{align*}
d_{\hF^{x_0} x''}(\hF^{x_0}_{x'x''}\hF^{x_0}_{xx'}a, \hF^{x_0}_{xx''}a) &= d_{Fx_0}(F_{x''x_0}F_{x'x''}F_{x_0x'}F_{x'x_0}F_{xx'}F_{x_0x}a, F_{x''x_0}F_{xx''}F_{x_0x}a) \\
&= d_{Fx''}(F_{x'x''}F_{xx'}F_{x_0x}a, F_{xx''}F_{x_0x}a) \\
&\leq d_X(x, x') + d_X(x', x'') - d_X(x, x''),
\end{align*}
for all $x, x', x'' \in X$ and $a \in \hF^{x_0} x$.
\end{df}
\begin{lem}\label{hatfff}
The correspondence $F \mapsto \hF^{x_0}$ defines a fully faithful functor $\widehat{(-)}^{x_0}: \Met_X \too \Met_X$. Further, this restricts to a fully faithful functor $\PMet_X^\G \too \PMet_X^\G$ for each metric group $\G$.
\end{lem}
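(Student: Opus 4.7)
The plan is to define $\widehat{(-)}^{x_0}$ on morphisms via $\widehat{\theta}^{x_0}_x := \theta_{x_0}$, then to exploit two symmetric observations: any metric transformation $\theta : F \Longrightarrow G$ is already determined by its component at $x_0$ (giving faithfulness), while any metric transformation between hat-actions is forced to be a constant family (giving fullness). For the action on morphisms itself, naturality of $\widehat{\theta}^{x_0}$ is a three-fold application of the naturality of $\theta$:
\[
\widehat{G}^{x_0}_{xx'}\theta_{x_0} = G_{x'x_0}G_{xx'}G_{x_0x}\theta_{x_0} = G_{x'x_0}G_{xx'}\theta_x F_{x_0x} = G_{x'x_0}\theta_{x'}F_{xx'}F_{x_0x} = \theta_{x_0}\widehat{F}^{x_0}_{xx'},
\]
and functoriality is immediate from $(\theta'\theta)_{x_0} = \theta'_{x_0}\theta_{x_0}$.

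For faithfulness, the naturality identity $\theta_x F_{x_0 x} = G_{x_0 x}\theta_{x_0}$ rearranges to $\theta_x = G_{x_0 x}\theta_{x_0}F_{xx_0}$, so $\theta$ is reconstructible from $\theta_{x_0}$; hence $\widehat{\theta}^{x_0} = \widehat{\eta}^{x_0}$ forces $\theta = \eta$. For fullness, let $\phi : \widehat{F}^{x_0} \Longrightarrow \widehat{G}^{x_0}$ be a metric transformation. The key observation is that
\[
\widehat{F}^{x_0}_{x_0 x} = F_{xx_0}F_{x_0x}F_{x_0x_0} = \mathrm{id}_{Fx_0},
\]
and similarly $\widehat{G}^{x_0}_{x_0 x} = \mathrm{id}$; so naturality of $\phi$ at the pair $(x_0, x)$ collapses to $\phi_x = \phi_{x_0}$, i.e.\ $\phi$ is the constant family at $\alpha := \phi_{x_0}$. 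Defining $\theta_x := G_{x_0 x}\,\alpha\, F_{xx_0}$ gives a Lipschitz map $Fx \too Gx$ with $\theta_{x_0} = \alpha$, and naturality of $\phi$ on a general pair $(x, x')$, namely $G_{x'x_0}G_{xx'}G_{x_0 x}\alpha = \alpha F_{x'x_0}F_{xx'}F_{x_0 x}$, rearranges (multiply by $G_{x_0 x'}$ on the left and $F_{xx_0}$ on the right) to the desired $G_{xx'}\theta_x = \theta_{x'}F_{xx'}$. By construction $\widehat{\theta}^{x_0} = \phi$.

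For the restriction to $\PMet_X^\G$, if $F$ is a $\G$-metric action then each $F_{xx'}$ is left multiplication by an element of $\G$, so each composite $\widehat{F}^{x_0}_{xx'} = F_{x'x_0}F_{xx'}F_{x_0x}$ is visibly left multiplication by an element of $\G$; the component $\widehat{\theta}^{x_0}_x = \theta_{x_0}$ and the recovery $\theta_x = G_{x_0 x}\alpha F_{xx_0}$ from the fullness argument are likewise left multiplications. The main obstacle I expect is pinpointing the constant-family reduction for fullness; once $\phi_x = \phi_{x_0}$ is established, the remainder is essentially rearranging the naturality identity.
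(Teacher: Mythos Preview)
Your proof is correct and follows essentially the same approach as the paper: both define $\widehat{\theta}^{x_0}_x = \theta_{x_0}$, verify naturality by threefold use of the naturality of $\theta$, obtain faithfulness from the reconstruction $\theta_x = G_{x_0x}\theta_{x_0}F_{xx_0}$, and for fullness observe that $\widehat{F}^{x_0}_{x_0x}$ and $\widehat{G}^{x_0}_{x_0x}$ are identities so that any $\phi$ between hat-actions is constant, then recover a preimage via $\theta_x = G_{x_0x}\phi_{x_0}F_{xx_0}$. The only cosmetic difference is that the paper verifies naturality of the recovered $\theta$ by an explicit chain, whereas you phrase it as a rearrangement of the naturality of $\phi$; these are the same computation.
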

\begin{proof}
Let $\theta : F \Longrightarrow G$ be a metric transformation. We define a metric transformation $\widehat{\theta}^{x_0} : \hF^{x_0} \Longrightarrow \widehat{G}^{x_0}$ by $\widehat{\theta}^{x_0}_x = \theta_{x_0} : \hF^{x_0}x \too \widehat{G}^{x_0}x ; a \mapsto \theta_{x_0}a$. Then we have 
\begin{align*}
\widehat{G}^{x_0}_{xx'}\widehat{\theta}^{x_0}_x &= G_{x'x_0}G_{xx'}G_{x_0x}\theta_{x_0} \\
&= G_{x'x_0}G_{xx'}\theta_{x}F_{x_0x} \\
&= G_{x'x_0}\theta_{x'}F_{xx'}F_{x_0x} \\
&= \theta_{x_0}F_{x'x_0}F_{xx'}F_{x_0x} \\
&= \widehat{\theta}^{x_0}_x\widehat{F}^{x_0}_{xx'},
\end{align*}
hence this defines a metric transformation. It is obvious that $\widehat{{\rm id}_F}^{x_0} = {\rm id}_{\hF^{x_0}}$ and $(\widehat{\theta' \theta})^{x_0} = \widehat{\theta}'^{x_0} \widehat{\theta}^{x_0}$. It is a faithful functor because $G_{xx_0}\theta_x = \theta_{x_0}F_{xx_0}$ implies that $\theta_{x} = \theta'_{x}$ for all $x \in X$ if two metric transformation $\theta, \theta'$ satisfies $\theta_{x_0} = \theta'_{x_0}$. By definition, it restricts to a faithful functor $\PMet_X^\G \too \PMet_X^\G$ for each metric group $\G$. Next we show the fullness. Let $\eta : \hF^{x_0} \Longrightarrow \widehat{G}^{x_0}$ be a metric transformation. Then we have $\widehat{G}^{x_0}_{x_0x}\eta_{x_0} = \eta_x\widehat{F}^{x_0}_{x_0x}$ and $\widehat{F}^{x_0}_{x_0x} = {\rm id}_{F_{x_0}}$, $\widehat{G}^{x_0}_{x_0x} = {\rm id}_{G_{x_0}}$. Hence we obtain that $\eta_{x_0} = \eta_x$ for all $x \in X$.

Now we define a metric transformation $\wt{\eta} : F \Longrightarrow G$ by $\wt{\eta}_x = G_{x_0x}\eta_{x_0}F_{xx_0} : Fx \too Gx$. Then we have 
\begin{align*}
G_{xx'}\wt{\eta}_x &= G_{xx'}G_{x_0x}\eta_{x_0}F_{xx_0} \\
&= G_{x_0x'}\widehat{G}^{x_0}_{xx'}\eta_{x}F_{xx_0} \\
&= G_{x_0x'}\eta_{x'}\widehat{F}^{x_0}_{xx'}F_{xx_0} \\
&= G_{x_0x'}\eta_{x'}F_{x'x_0}F_{xx'}F_{x_0x}F_{xx_0} \\
&= G_{x_0x'}\eta_{x_0}F_{x'x_0}F_{xx'} \\
&= \wt{\eta}_{x'}F_{xx'},
\end{align*}
hence this defines a metric transformation. We obviously have $\widehat{(\wt{\eta})}^{x_0} = \eta$, which implies that the functor $\widehat{(-)}^{x_0}$ is full. The restriction to $\PMet_X^\G \too \PMet_X^\G$ is immediate. 
\end{proof}
\begin{lem}
The functor $\widehat{(-)}^{x_0}: \Met_X \too \Met_X$ is split essentially surjective. Its restriction $\PMet_X^\G \too \PMet_X^\G$ is also split essentially surjective for all metric group $\G$.
\end{lem}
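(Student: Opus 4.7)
The plan is to establish split essential surjectivity by taking, for each metric action $F$ in the target copy of $\Met_X$, the same metric action $F$ as the pre-image in the source, and then constructing an explicit natural isomorphism $\widehat{F}^{x_0} \cong F$. This is the most economical choice because the functor $\widehat{(-)}^{x_0}$ is idempotent up to isomorphism, and the required isomorphism should be visible directly from the structure maps of $F$.

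Concretely, for each metric action $F : X \too \Met$, I would define a metric transformation $\theta : F \Longrightarrow \widehat{F}^{x_0}$ by setting $\theta_x = F_{xx_0} : Fx \too Fx_0 = \widehat{F}^{x_0} x$ for every $x \in X$. Each $\theta_x$ is then an isometry because $F_{xx_0}$ is by definition of a metric action, so each component is trivially an isomorphism in $\Met$. The naturality square to check is $\widehat{F}^{x_0}_{xx'} \theta_x = \theta_{x'} F_{xx'}$, which unfolds as
\[
F_{x'x_0}F_{xx'}F_{x_0x}F_{xx_0} = F_{x'x_0}F_{xx'},
\]
and this is immediate from $F_{x_0x}F_{xx_0} = \mathrm{id}_{Fx}$, which in turn follows from $F_{x_0x} = F_{xx_0}^{-1}$. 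Hence $\theta$ is a well-defined isomorphism in $\Met_X$, and the family $\{\theta : F \cong \widehat{F}^{x_0}\}_{F \in \Ob \Met_X}$ witnesses the split essential surjectivity.

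For the restriction to $\PMet_X^\G$, suppose $F$ is a $\G$-metric action, so $Fx = \G$ for all $x$ and each $F_{xx'}$ is left multiplication by some $f_{xx'} \in \G$. Then $\widehat{F}^{x_0}_{xx'} = F_{x'x_0}F_{xx'}F_{x_0x}$ is a composition of left multiplications, hence itself left multiplication by $f_{x'x_0}f_{xx'}f_{x_0x} \in \G$, so $\widehat{F}^{x_0} \in \PMet_X^\G$. Moreover, each $\theta_x = F_{xx_0}$ is left multiplication by $f_{xx_0}$, and therefore $\theta$ is a $\G$-metric transformation. Thus the same family of isomorphisms shows split essential surjectivity of the restricted functor as well.

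There is no real obstacle here: everything is a direct manipulation of the identity $F_{x_0x}F_{xx_0} = \mathrm{id}$ together with the observation that composing the ``conjugation'' $F_{x'x_0}F_{xx'}F_{x_0x}$ with $F_{xx_0}$ collapses the rightmost pair. The only point deserving attention is verifying that the components of $\theta$ and the maps $\widehat{F}^{x_0}_{xx'}$ remain in the category $\PMet_X^\G$ when $F$ is already a $\G$-metric action, which is clear from the closure of ``being a left multiplication'' under composition.
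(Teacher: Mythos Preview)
Your proof is correct and follows essentially the same approach as the paper: for each $F$ you take $F$ itself as preimage and exhibit an explicit isomorphism between $\widehat{F}^{x_0}$ and $F$ whose components are the structure isometries $F_{xx_0}$. The paper writes the isomorphism in the opposite direction, $\theta : \widehat{F}^{x_0} \Longrightarrow F$ with $\theta_x = F_{x_0x}$, and explicitly checks that its pointwise inverse $\theta^{-1}_x = F_{xx_0}$ is again a metric transformation, whereas you rely on the standard fact that a metric transformation with invertible components is an isomorphism; this is a cosmetic difference only.
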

\begin{proof}
Let $F : X \too \Met$ be a metric action. We define a metric transformation $\theta : \hF^{x_0} \Longrightarrow F$ by $\theta_x = F_{x_0x} : \widehat{F}^{x_0}x \too Fx ; a \mapsto F_{x_0x}a$. It  satisfies that
\begin{align*}
F_{xx'}\theta_x &= F_{xx'}F_{x_0x} \\
&= F_{x_0x'}F_{x'x_0}F_{xx'}F_{x_0x} \\
&= \theta_{x'}\widehat{F}^{x_0}_{xx'}.
\end{align*}
Further, we define a metric transformation $\theta^{-1} : F \Longrightarrow \hF^{x_0}$ by $\theta^{-1}_x = F_{xx_0} : Fx \too \hF^{x_0}x$ for all $x \in X$. Then we have $\hF_{xx'}^{x_0}\theta^{-1}_x = \theta^{-1}_xF_{xx'}$ similarly to the above, hence it defines a metric transformation. It is obviously an isomorphism. The restriction to $\PMet_X^\G \too \PMet_X^\G$ is immediate. %Further, for a metric transformation $\eta : F \Longrightarrow G$, we have $\theta'_x\widehat{\eta}^{x_0}_x = G_{x_0x}\eta_{x_0} = \eta_xF_{x_0x} = \eta_x\theta_x$, where $\theta'$ is the isomorphism constructed as above for $G$. This implies that $\theta$ is a natural isomorphism. 

\end{proof}
\begin{cor}\label{singlefiber}
The functor $\widehat{(-)}^{x_0}: \Met_X \too \Met_X$ and its restriction $\PMet_X^\G \too \PMet_X^\G$ for all metric group $\G$ are category equivalences. \qed
\end{cor}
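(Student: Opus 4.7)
The plan is to simply assemble the two preceding lemmas via the standard criterion recalled at the start of the paper. That criterion (the unnamed lemma in the Conventions section) states that a fully faithful and split essentially surjective functor is a category equivalence, so no new construction is required here.

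More precisely, the first step is to invoke Lemma \ref{hatfff}, which gives that $\widehat{(-)}^{x_0} : \Met_X \too \Met_X$ is fully faithful, and which also records that the same holds for the restriction $\PMet_X^\G \too \PMet_X^\G$. The second step is to invoke the lemma immediately preceding the corollary, which produces, for every metric action $F$, an explicit natural isomorphism $\theta : \hF^{x_0} \Longrightarrow F$ with components $\theta_x = F_{x_0x}$; this shows that $\widehat{(-)}^{x_0}$ is split essentially surjective (with $F$ itself being a preferred preimage of $F$ up to isomorphism), and that the same is true of the restriction to $\PMet_X^\G$ since the components $F_{x_0x}$ are left multiplications by elements of $\G$ when $F$ is a $\G$-metric action.

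Combining these two facts with the general criterion yields the claim in both cases. There is no real obstacle: the technical work has already been done in Lemma \ref{hatfff} (fullness was the only slightly delicate point, handled there by observing that any $\eta : \hF^{x_0} \Longrightarrow \widehat{G}^{x_0}$ must be constant in the $x$ variable because $\widehat{F}^{x_0}_{x_0x} = {\rm id}$), and essential surjectivity reduces to checking that the prescribed formulas define mutually inverse metric transformations, which is a direct calculation. Thus the corollary follows immediately.
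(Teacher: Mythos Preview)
Your proposal is correct and is exactly the approach the paper intends: the corollary is stated without its own proof because it follows immediately from combining Lemma~\ref{hatfff} (fully faithful) with the subsequent lemma (split essentially surjective) via the general criterion in the Conventions section. Your summary of the ingredients, including the restriction to $\PMet_X^\G$, matches the paper's setup.
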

\begin{df}\label{fiby}
\begin{enumerate}
\item We denote the image of the functor $\widehat{(-)}^{x_0}: \Met_X \too \Met_X$ by $\wh{\Met}_X^{x_0}$.
\item For each metric space $Y$, we denote by $\Met_X^Y$ the full subcategory of $\Met_X$ that consists of metric actions $F : X \too \Met$ such that $Fx \cong Y$ for all $x \in X$. 
\item We denote the image of $\widehat{(-)}^{x_0}$ restricted to $\Met_X^Y$ and $\PMet_X^\G$ by $\wh{\Met}_X^{Y, x_0}$ and $\wh{\PMet}_X^{\G, x_0}$ respectively.
\item For each metric space $Y$, we denote by $\Fib_X^Y$ the full subcategory of $\Fib_X$ that consists of metric fibrations $\pi : E \too X$ such that $\pi^{-1}x \cong Y$ for all $x \in X$. 
\end{enumerate}
\end{df}
\begin{lem}\label{metfibpmet}
\begin{enumerate}
\item We have category equivalences $\Met_X^Y \too \wh{\Met}_X^{Y, x_0}$  and $\PMet_X^\G \too \wh{\PMet}_X^{\G, x_0}$.
\item The Grothendieck construction functor $E : \Met_X \too \Fib_X$ restricts to the category equivalence $\Met_X^Y \too \Fib_X^Y$.
\end{enumerate}
\end{lem}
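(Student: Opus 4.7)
Both parts reduce to restricting already-established equivalences, so neither requires new construction.

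For (1), I would combine Lemma \ref{hatfff} with the definition of the codomain. The functor $\widehat{(-)}^{x_0} : \Met_X \too \Met_X$ is fully faithful by Lemma \ref{hatfff}, and so is its restriction to $\PMet_X^{\G}$. Full faithfulness is inherited by further restriction to any subcategory, so the functors $\Met_X^Y \too \Met_X$ and $\PMet_X^\G \too \PMet_X^\G$ obtained by restricting $\widehat{(-)}^{x_0}$ remain fully faithful. By Definition \ref{fiby}(3), the categories $\wh{\Met}_X^{Y, x_0}$ and $\wh{\PMet}_X^{\G, x_0}$ are defined as the images of precisely these restricted functors, so essential surjectivity onto the codomain is tautological. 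The lemma stated right after Definition \ref{catconv} then yields both equivalences.

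For (2), I would invoke Proposition \ref{metfib} and check that the equivalence $E : \Met_X \too \Fib_X$ together with its quasi-inverse $F$ both preserve the `$Y$-fiber' condition. The key observation is that, in the Grothendieck construction, the induced metric on the fiber $\pi_F^{-1}(x) = \{x\}\times Fx$ reduces to $d_{E(F)}((x,a),(x,b)) = d_{Fx}(F_{xx}a,b) = d_{Fx}(a,b)$, so the fibre is isometric to $Fx$. Hence $F \in \Met_X^Y$ if and only if $\pi_F \in \Fib_X^Y$. Conversely, $F_\pi x$ is by definition $\pi^{-1}x$ with the subspace metric, so $\pi \in \Fib_X^Y$ precisely when $F_\pi \in \Met_X^Y$. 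The natural isomorphisms $EF \cong {\rm id}$ and $FE \cong {\rm id}$ established in Proposition \ref{metfib} restrict to these subcategories without modification, giving the desired equivalence $\Met_X^Y \simeq \Fib_X^Y$.

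Since everything follows by direct restriction of known equivalences, there is no substantive obstacle to overcome; the only point demanding any mild care is interpreting `image' in Definition \ref{fiby}(3) as the full subcategory spanned by the image objects, which is the reading made natural by the earlier notation $\wh{\Met}_X^{x_0}$ in Definition \ref{fiby}(1) together with the full faithfulness of $\widehat{(-)}^{x_0}$.
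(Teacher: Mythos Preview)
Your proposal is correct and follows essentially the same route as the paper. The paper's own proof is even more terse---it simply says ``(1) follows from Corollary \ref{singlefiber}, and (2) follows from the proof of Proposition \ref{metfib}''---and your argument is a faithful unpacking of those citations; the only cosmetic difference is that for (1) you appeal to Lemma \ref{hatfff} (full faithfulness) plus the definition of the image, whereas the paper invokes Corollary \ref{singlefiber} (the full equivalence) directly.
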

\begin{proof}(1) follows from Corollary \ref{singlefiber}, and (2) follows from the proof of Proposition \ref{metfib}.
\end{proof}

\subsection{Classification for the case of bounded fibers}\label{bddfiber}

In this subsection, we suppose that $X$ and $Y$ are metric spaces and $Y$ is bounded. Note that in this case the group $\Aut Y$ of automorphisms is a metric group (Example \ref{auty}).
\begin{lem}
 We have a  faithful functor 
\[
-\curvearrowright Y : \PMet_X^{\Aut Y} \too \Met_X^{ Y}.
\]

\end{lem}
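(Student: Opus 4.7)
The plan is to form the associated bundle by letting the cocycle of the $\Aut Y$-action act on $Y$ by isometries pointwise. For a $\Aut Y$-metric action $F : X \too \Met$, where by definition $Fx = \Aut Y$ and each $F_{xx'}$ is the left multiplication by some element $f_{xx'} \in \Aut Y$, I define $F \curvearrowright Y : X \too \Met$ by
\[
(F \curvearrowright Y)x := Y, \qquad (F \curvearrowright Y)_{xx'} := f_{xx'} : Y \too Y,
\]
viewing each $f_{xx'}$ as an isometry of $Y$. The axioms $F_{xx} = \mathrm{id}$ and $F_{x'x} = F_{xx'}^{-1}$ translate to $f_{xx} = e$ and $f_{x'x} = f_{xx'}^{-1}$, which yield the corresponding identities for $F \curvearrowright Y$.

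The key step is verifying the cocycle inequality for $F \curvearrowright Y$. Here I would exploit the description $d_{\Aut Y}(g, h) = \sup_{y \in Y} d_Y(gy, hy)$ together with left invariance of $d_{\Aut Y}$ (Lemma \ref{metgrp}(1)): evaluating the cocycle inequality for $F$ at the identity $e \in \Aut Y$ and using left invariance gives
\[
\sup_{y \in Y} d_Y(f_{x'x''}f_{xx'}(y), f_{xx''}(y)) = d_{\Aut Y}(f_{x'x''}f_{xx'}, f_{xx''}) \leq d_X(x,x') + d_X(x',x'') - d_X(x,x''),
\]
which gives the required pointwise inequality for $F \curvearrowright Y$ and in particular ensures that the functor lands in $\Met_X^Y$.

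For morphisms, a $\Aut Y$-metric transformation $\theta : F \Longrightarrow G$ has each component $\theta_x$ equal to left multiplication by some $\theta_x \in \Aut Y$, and I set $(\theta \curvearrowright Y)_x := \theta_x : Y \too Y$. The metric transformation condition for $\theta \curvearrowright Y$ reduces to the identity $g_{xx'}\theta_x = \theta_{x'}f_{xx'}$ in $\Aut Y$ evaluated at any point of $Y$, which is exactly the condition on $\theta$. Functoriality is immediate from composition in $\Aut Y$, and faithfulness holds because $\theta \curvearrowright Y$ records exactly the isometries $\theta_x \in \Aut Y$ that determine $\theta$. The only nontrivial moment in the argument is the use of left invariance to pass from the cocycle inequality (stated with a variable element $g \in \Aut Y$) to the pointwise bound needed on $Y$; everything else is formal.
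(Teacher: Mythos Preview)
Your proof is correct and follows exactly the paper's construction; the paper merely declares the metric-action inequality ``immediate to verify'', whereas you spell it out via the sup-metric description of $d_{\Aut Y}$. One minor remark: the invariance you use to cancel the variable element is right invariance (or, more simply, you can just instantiate the axiom for $F$ at $e\in\Aut Y$ and skip the invariance step entirely).
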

\begin{proof}
Let $F \in \PMet_X^{\Aut Y}$. We define a metric action $F \curvearrowright Y : X \too \Met$ by $(F \curvearrowright Y)x = Y$ and $(F \curvearrowright Y)_{xx'} = F_{xx'} : Y \too Y$. It is immediate to verify that this  defines a metric action. For an $\Aut Y$-metric transformation $\theta : F \Longrightarrow G$, we define a metric transformation $\theta \curvearrowright Y : F\curvearrowright Y \Longrightarrow G\curvearrowright Y$ by $(\theta \curvearrowright Y)_x = \theta_x : Y \too Y ; y \mapsto \theta_xy$. Then it is also immediate to verify that it is a metric transformation. Further, this obviously defines a faithful functor. 
\end{proof}
\begin{lem}
The functor $-\curvearrowright Y : \PMet_X^{\Aut Y} \too \Met_X^{Y}$ is split essentially surjective.
\end{lem}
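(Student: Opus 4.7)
I would prove split essential surjectivity by producing, for each $G \in \Met_X^Y$, an $\Aut Y$-metric action $F_G$ together with an isomorphism $F_G \curvearrowright Y \cong G$ in $\Met_X^Y$. The central idea is to first trivialize the fibers of $G$ over a fixed basepoint using the functor $\widehat{(-)}^{x_0}$ of Corollary \ref{singlefiber}, and then conjugate into $\Aut Y$ via a chosen isometry.

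First, fix a basepoint $x_0 \in X$ and choose an isometry $\phi : Gx_0 \too Y$, which exists since $Gx_0 \cong Y$. For each pair $x,x' \in X$ set
\[
f_{xx'} := \phi \circ \widehat{G}^{x_0}_{xx'}\circ \phi^{-1} = \phi\circ G_{x'x_0}\circ G_{xx'}\circ G_{x_0x}\circ \phi^{-1} \in \Aut Y,
\]
where membership in $\Aut Y$ uses that $Y$ is bounded (so that all isometries have finite displacement in the sense of Example \ref{auty}). Define $F_G : X \too \Met$ by $(F_G)x = \Aut Y$ and $(F_G)_{xx'}$ = left multiplication by $f_{xx'}$. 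The identity and inverse conditions $f_{xx} = \mathrm{id}_Y$ and $f_{x'x} = f_{xx'}^{-1}$ follow at once from the corresponding properties of $\widehat{G}^{x_0}$.

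Next, verify the cocycle inequality: using that $d_{\Aut Y}$ is defined as a supremum of pointwise distances and that $\phi$ is an isometry, one rewrites
\[
d_{\Aut Y}(f_{x'x''}f_{xx'}g,\, f_{xx''}g) = \sup_{y\in Y} d_{Gx_0}\bigl(\widehat{G}^{x_0}_{x'x''}\widehat{G}^{x_0}_{xx'}(\phi^{-1}g(y)),\, \widehat{G}^{x_0}_{xx''}(\phi^{-1}g(y))\bigr),
\]
and then invokes the metric-action inequality for $\widehat{G}^{x_0}$ pointwise in $y$ to bound the supremum by $d_X(x,x')+d_X(x',x'') - d_X(x,x'')$. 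This is the real work, and the main obstacle: the scalar cocycle bound for $\widehat{G}^{x_0}$ must be upgraded to an inequality in the $L^\infty$-type metric on $\Aut Y$, which succeeds precisely because the metric-action bound for $\widehat{G}^{x_0}$ is uniform in the fiber coordinate.

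Finally, I would exhibit the isomorphism $\Phi : F_G \curvearrowright Y \Longrightarrow G$ with components $\Phi_x := G_{x_0x}\circ \phi^{-1} : Y \too Gx$, each an isometry. The naturality condition collapses via $G_{x_0x'}G_{x'x_0} = \mathrm{id}$:
\[
\Phi_{x'}\circ (F_G \curvearrowright Y)_{xx'} = G_{x_0x'}\phi^{-1}\phi\widehat{G}^{x_0}_{xx'}\phi^{-1} = G_{x_0x'}G_{x'x_0}G_{xx'}G_{x_0x}\phi^{-1} = G_{xx'}\circ \Phi_x.
\]
This gives the required $F_G \curvearrowright Y \cong G$, completing the construction of a section (up to isomorphism) of $-\curvearrowright Y$ on objects.
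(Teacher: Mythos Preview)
Your proof is correct and follows essentially the same idea as the paper's: construct an $\Aut Y$-metric action by conjugating the transition isometries of the given metric action into $\Aut Y$, then check the cocycle bound via the $\sup$-definition of $d_{\Aut Y}$. The only organisational difference is that the paper fixes, for each $x$, an isometry $\varphi_x : Y \to Fx$ (via the axiom of choice) and sets $(\Aut F)_{xx'} = \varphi_{x'}^{-1}F_{xx'}\varphi_x$, whereas you fix a single isometry $\phi$ at a basepoint $x_0$ and propagate it using $\widehat{(-)}^{x_0}$; your $G_{x_0x}\circ\phi^{-1}$ is precisely the paper's $\varphi_x$ under a particular choice, so the two arguments coincide.
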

\begin{proof}
Let $F \in \Met_X^{Y}$ and fix isometries $\varphi_x : Y \too Fx$ using the axiom of choice.  We define an $\Aut Y$-metric action $\Aut F$ by $(\Aut F)x = \Aut Y$ and $(\Aut F)_{xx'} = \varphi_{x'}^{-1}F_{xx'}\varphi_x\cdot$ that is a left multiplication. We can verify that it is an $\Aut Y$-metric action as follows. Note that we have $(\Aut F)_{xx} = \varphi_{x}^{-1}F_{xx}\varphi_x\cdot = {\rm id}_{\Aut Y}$ and $(\Aut F)_{xx'}^{-1} = \varphi_{x}^{-1}F_{x'x}\varphi_{x'}\cdot = (\Aut F)_{x'x}$. We also have that
\begin{align*}
&\ d_{\Aut Y}((\Aut F)_{x'x''}(\Aut F)_{xx'}, (\Aut F)_{xx''}) \\
&= d_{\Aut Y}(\varphi_{x''}^{-1}F_{x'x''}\varphi_{x'}\varphi_{x'}^{-1}F_{xx'}\varphi_x, \varphi_{x''}^{-1}F_{xx''}\varphi_x) \\
&= d_{\Aut Y}(\varphi_{x''}^{-1}F_{x'x''}F_{xx'}\varphi_x, \varphi_{x''}^{-1}F_{xx''}\varphi_x) \\
&= \sup_{a \in Y}d_{Y}(\varphi_{x''}^{-1}F_{x'x''}F_{xx'}\varphi_xa, \varphi_{x''}^{-1}F_{xx''}\varphi_xa) \\
&= \sup_{a \in Fx}d_{Fx''}(F_{x'x''}F_{xx'}a, F_{xx''}a) \\
&\leq d_X(x, x') + d_X(x', x'') - d_X(x, x'').
\end{align*}
Now we define a metric transformation $\varphi : \Aut F\curvearrowright Y \Longrightarrow F$ by $\varphi_x : (\Aut F\curvearrowright Y)x = Y \too Fx$. This  satisfies that $F_{xx'}\varphi_x = \varphi_{x'}(\Aut F\curvearrowright Y)_{xx'}$ and is an isomorphism by the definition.  
\end{proof}
Since the category $\PMet_X^{\Aut Y}$ is a groupoid, the image of the functor $-\curvearrowright Y$ is in ${\sf core}\Met_X^{Y}$. (Here ${\sf core}$ denotes the subcategory consisting of all isomorphisms, as in  \ref{catconv} (4).)
\begin{lem}
The functor $\wh{-\curvearrowright Y}^{x_0} : \PMet_X^{\Aut Y} \too {\sf core}\wh{\Met}_X^{Y, x_0}$ is full.
\end{lem}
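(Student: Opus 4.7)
Given $F, G \in \PMet_X^{\Aut Y}$, write $F_{xx'} = f_{xx'}\cdot$ and $G_{xx'} = g_{xx'}\cdot$ with $f_{xx'}, g_{xx'} \in \Aut Y$, and let $\eta : \wh{F\curvearrowright Y}^{x_0} \Longrightarrow \wh{G\curvearrowright Y}^{x_0}$ be a morphism in ${\sf core}\wh{\Met}_X^{Y, x_0}$. The plan is to reverse-engineer an $\Aut Y$-metric transformation $\theta : F \Longrightarrow G$ with $\wh{\theta \curvearrowright Y}^{x_0} = \eta$. As observed in the proof of Lemma~\ref{hatfff}, the fact that $\wh{F\curvearrowright Y}^{x_0}_{x_0 x} = {\rm id}_Y = \wh{G\curvearrowright Y}^{x_0}_{x_0 x}$ combined with the naturality of $\eta$ at $(x_0, x)$ forces $\eta_x = \eta_{x_0}$ for every $x \in X$. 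Since $\eta$ lies in the core, the common value $h := \eta_{x_0}$ is an isometry of $Y$, hence an element of $\Aut Y$.

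With $h$ in hand, I will define $\theta : F \Longrightarrow G$ componentwise by $\theta_x := g_{x_0 x}\, h\, f_{x x_0}$, viewed as left multiplication on $\Aut Y$. The formula is not arbitrary: any $\theta$ with $\wh{\theta \curvearrowright Y}^{x_0} = \eta$ must have $\theta_{x_0} = (\wh{\theta \curvearrowright Y}^{x_0})_{x_0} = h$, and then the required naturality $g_{x_0 x}\theta_{x_0} = \theta_x f_{x_0 x}$ applied at $(x_0, x)$ forces $\theta_x = g_{x_0 x}\, h\, f_{x_0 x}^{-1} = g_{x_0 x}\, h\, f_{x x_0}$. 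So uniqueness is automatic; the content is existence.

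The main check---and the only nontrivial step---is that this $\theta$ is indeed an $\Aut Y$-metric transformation, i.e., $g_{xx'}\theta_x = \theta_{x'}f_{xx'}$ in $\Aut Y$ for all $x, x' \in X$. Unpacking the naturality of $\eta$ at $(x, x')$ and using $\wh{F\curvearrowright Y}^{x_0}_{xx'} = f_{x' x_0}\, f_{xx'}\, f_{x_0 x}$ (and likewise for $G$), this naturality amounts to the identity
\[
g_{x' x_0}\, g_{xx'}\, g_{x_0 x}\, h \;=\; h\, f_{x' x_0}\, f_{xx'}\, f_{x_0 x}
\]
in $\Aut Y$. Left-multiplying by $g_{x_0 x'} = g_{x' x_0}^{-1}$ and right-multiplying by $f_{x x_0} = f_{x_0 x}^{-1}$ turns this into $g_{xx'}\, g_{x_0 x}\, h\, f_{x x_0} = g_{x_0 x'}\, h\, f_{x' x_0}\, f_{xx'}$, which is exactly $g_{xx'}\theta_x = \theta_{x'}f_{xx'}$. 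Together with $\wh{\theta \curvearrowright Y}^{x_0}_x = \theta_{x_0} = h = \eta_x$, this yields $\wh{\theta \curvearrowright Y}^{x_0} = \eta$ and proves fullness. I expect the main obstacle to be purely notational: threading the inversion relations $f_{x'x} = f_{xx'}^{-1}$ and $g_{x'x} = g_{xx'}^{-1}$ through the displayed identity to recognise it as the transformation condition for $\theta$; everything else is a mechanical unpacking of definitions.
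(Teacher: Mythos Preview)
Your proof is correct. The paper argues a bit more structurally: it first records the factorisation $\wh{-\curvearrowright Y}^{x_0} = (-\curvearrowright Y)\circ\wh{(-)}^{x_0}$, cites the fullness of $\wh{(-)}^{x_0}$ from Lemma~\ref{hatfff}, and then observes that the remaining factor $-\curvearrowright Y : \wh{\PMet}_X^{\Aut Y, x_0} \to {\sf core}\,\wh{\Met}_X^{Y, x_0}$ is full simply because a core-morphism has each component $\theta_x$ an isometry of $Y$, hence an element of $\Aut Y$, so the very same family $(\theta_x)$ already defines an $\Aut Y$-metric transformation between $\wh{F}^{x_0}$ and $\wh{G}^{x_0}$. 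Your argument is the unwinding of this: the formula $\theta_x = g_{x_0x}\,h\,f_{xx_0}$ is exactly the construction $\wt{\eta}_x = G_{x_0x}\eta_{x_0}F_{xx_0}$ used inside the proof of fullness in Lemma~\ref{hatfff}, and your displayed identity is the naturality condition for $\eta$ rewritten. So the two proofs have identical content; the paper's version is shorter by delegating to the earlier lemma, while yours is self-contained and makes the algebra explicit.
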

\begin{proof}
Note that we have $\wh{-\curvearrowright Y}^{x_0} = \wh{(-)}^{x_0}\curvearrowright Y$ by the definitions. Since the functor $\wh{(-)}^{x_0} : \PMet_X^{\Aut Y} \too \PMet_X^{\Aut Y}$ is full by Lemma \ref{hatfff}, it will suffice to show that the restriction $-\curvearrowright Y : \wh{\PMet}_X^{\Aut Y, x_0} \too {\sf core}\wh{\Met}_X^{Y, x_0}$ is full. Let $\theta : \wh{F}^{x_0} \curvearrowright Y \Longrightarrow \wh{G}^{x_0} \curvearrowright Y$ be an isomorphism in $\wh{\Met}_X^{Y, x_0}$, where $F, G \in \PMet_X^{\Aut Y}$. Then we have an isometry $\theta_x : Y \too Y$ such that $G_{x'x_0}G_{xx'}G_{x_0x}\theta_x = \theta_{x'}F_{x'x_0}F_{xx'}F_{x_0x}$ for all $x, x' \in X$. Since we have $\theta_x \in \Aut Y$, we obtain a morphism $\theta' : \wh{F}^{x_0} \Longrightarrow \wh{G}^{x_0} \in \wh{\PMet}_X^{\Aut Y, x_0}$ defined by $\theta'_x = \theta_x$. It is obvious that we have $\theta' \curvearrowright Y = \theta$. 
 \end{proof}
\begin{cor}\label{changefiber}
The functor $\wh{-\curvearrowright Y}^{x_0} : \PMet_X^{\Aut Y} \too {\sf core}\wh{\Met}_X^{Y, x_0}$ is an  equivalence of categories. \qed
\end{cor}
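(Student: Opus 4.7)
The plan is to invoke the criterion that a fully faithful, split essentially surjective functor is a category equivalence (the lemma stated right after Definition \ref{catconv}). I therefore need to verify three ingredients for $\wh{-\curvearrowright Y}^{x_0}$: faithfulness, fullness, and split essential surjectivity.

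Fullness is already exactly the content of the immediately preceding lemma. For faithfulness, I would observe that $\wh{-\curvearrowright Y}^{x_0}$ factors as $\wh{(-)}^{x_0}\circ(-\curvearrowright Y)$ (the identification $\wh{F\curvearrowright Y}^{x_0} = \wh{F}^{x_0}\curvearrowright Y$ is built into the definitions, and was used in the proof of the preceding lemma). The factor $-\curvearrowright Y : \PMet_X^{\Aut Y} \too \Met_X^Y$ is faithful by the first lemma of this subsection, and $\wh{(-)}^{x_0}$ is faithful by Lemma \ref{hatfff}, so their composition is faithful as well.

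For split essential surjectivity, I would start from an arbitrary $H \in \wh{\Met}_X^{Y,x_0}$, which by definition has the form $H = \wh{F}^{x_0}$ for some $F \in \Met_X^Y$. The second lemma of this subsection supplies an $\Aut F \in \PMet_X^{\Aut Y}$ together with an isomorphism $\varphi : \Aut F \curvearrowright Y \cong F$ in $\Met_X^Y$. Applying the functor $\wh{(-)}^{x_0}$ to $\varphi$ produces an isomorphism $\wh{\Aut F \curvearrowright Y}^{x_0} \cong \wh{F}^{x_0} = H$, which is an isomorphism in $\wh{\Met}_X^{Y,x_0}$ and hence is a genuine morphism of ${\sf core}\wh{\Met}_X^{Y,x_0}$ by Definition \ref{catconv} (4). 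Collecting these over all $H$ gives the required splitting. The image of $\wh{-\curvearrowright Y}^{x_0}$ indeed lies in the core, since $\PMet_X^{\Aut Y}$ is a groupoid (remarked just before the preceding lemma), so every morphism in its image is already invertible.

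There is no substantial obstacle; the corollary is a brief assembly of three lemmas that have already been carefully prepared. The only mild point requiring a moment of care is matching the codomain ${\sf core}\wh{\Met}_X^{Y,x_0}$ with the categories appearing in those lemmas: one uses Definition \ref{catconv} (4) to identify $\Ob\,{\sf core}\wh{\Met}_X^{Y,x_0} = \Ob\,\wh{\Met}_X^{Y,x_0}$, so that split essential surjectivity onto the core is the same data as split essential surjectivity onto $\wh{\Met}_X^{Y,x_0}$ equipped with isomorphism-valued splittings, which is precisely what the preceding construction provides.
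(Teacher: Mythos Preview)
Your proposal is correct and is exactly the assembly the paper intends: the corollary is stated without proof because it is meant to follow immediately from the three preceding lemmas (faithfulness of $-\curvearrowright Y$, its split essential surjectivity, and fullness of $\wh{-\curvearrowright Y}^{x_0}$) together with Lemma~\ref{hatfff} and the fully-faithful-plus-split-essentially-surjective criterion. You have spelled out precisely these ingredients, including the factorization $\wh{-\curvearrowright Y}^{x_0} = \wh{(-)}^{x_0}\circ(-\curvearrowright Y)$ that the paper also uses in the proof of the fullness lemma.
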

\begin{cor}\label{pmetcore}
The categories $\PMet_X^{\Aut Y}$ and ${\sf core}\Fib_X^Y$ are equivalent.
\end{cor}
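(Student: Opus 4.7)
The plan is to chain together three equivalences that have essentially been established, using the fact (recorded among the preliminary lemmas of Section 2.1) that a category equivalence $F : C \too D$ restricts to an equivalence $\core F : \core C \too \core D$ of cores.

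First, Corollary \ref{changefiber} already provides the equivalence
\[
\PMet_X^{\Aut Y} \simeq \core \wh{\Met}_X^{Y, x_0}.
\]
Note that $\PMet_X^{\Aut Y}$ is itself a groupoid, so its core is itself, which is consistent with the target being a core. Next, by Lemma \ref{metfibpmet}(1) the functor $\wh{(-)}^{x_0}$ yields a category equivalence $\Met_X^Y \simeq \wh{\Met}_X^{Y, x_0}$, and passing to cores gives
\[
\core \Met_X^Y \simeq \core \wh{\Met}_X^{Y, x_0}.
\]
Finally, by Lemma \ref{metfibpmet}(2), the Grothendieck construction restricts to a category equivalence $\Met_X^Y \simeq \Fib_X^Y$, and taking cores on both sides produces
\[
\core \Met_X^Y \simeq \core \Fib_X^Y.
\]

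Composing these three equivalences yields the desired equivalence $\PMet_X^{\Aut Y} \simeq \core \Fib_X^Y$. No further calculation is required: all the real work has been done in Corollary \ref{changefiber} (the fiber-change argument with the group $\Aut Y$) and in Lemma \ref{metfibpmet}. There is no genuine obstacle here; the only point worth verifying is that the restrictions in Lemma \ref{metfibpmet}(2) genuinely land in the full subcategories indexed by $Y$, but this is immediate from the construction since $E(F)$ has fiber $Fx \cong Y$ over each $x \in X$, and conversely $F_\pi$ has value $\pi^{-1}x \cong Y$.
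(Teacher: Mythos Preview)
Your proof is correct and follows essentially the same route as the paper: both combine Corollary \ref{changefiber} with the two parts of Lemma \ref{metfibpmet}, passing to cores via the preliminary lemma on category equivalences. The paper's own proof is a one-line version of exactly this chain of equivalences.
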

\begin{proof}
It follows from Corollary \ref{changefiber}, with ${\sf core}\Fib_X^Y \simeq {\sf core}\Met_X^Y \simeq  {\sf core}\wh{\Met}_X^{Y, x_0}$ by Lemma \ref{metfibpmet}.
\end{proof}

\subsection{Classification for the case of unbounded fibers}
To classify general metric fibrations, we generalize the discussions so far to {\it extended metric groups}.
\begin{df}\label{extmet}
\begin{enumerate}
\item An {\it extended metric group} is a monoid object in $\EMet$ that is a group when we forget the metric structure.
\item For extended metric groups $\mathcal{G}$ and $\mathcal{H}$, a {\it homomorphism} from $\G$ to $\H$ is a Lipschitz map $\G \too \H$ that commutes with the group structure.
\item We denote the category of extended metric groups and homomorphisms by $\EMGrp$. Note that the category $\MGrp$ is a full subcategory of $\EMGrp$.
\end{enumerate}
\end{df}

\begin{eg}
Let $(X, d)$ be a metric space, and let $\Aut X$ be the group of isometries on $X$. We define a distance function on $\Aut X$ by $d_{\Aut X}(f, g) = \sup_{x\in X} d_X(fx, gx)$. Then it is immediate to verify that $(\Aut X, d_{\Aut X})$ is an extended metric group. We note that the `unit component' of $\Aut X$, that is the set of isometries $f$ such that $d_{\Aut X}({\rm id}_X, f)< \infty$, is exactly $\Aut^u X$ (Example \ref{auty}). Note that, if the metric space $X$ has finite diameter, then we have $\Aut X = \Aut^u X$.
\end{eg}

\begin{df}
Let $\G$ and $\G'$ be extended metric groups, and let $\Hom (\G, \G')$ be the set of homomorphisms. We equip $\Hom (\G, \G')$ with a groupoid structure similarly to the metric group case by defining $\Hom (\G, \G')(\varphi, \psi) = \{h \in \G' \mid \varphi = h^{-1}\psi h\}$ for all homomorphisms $\varphi, \psi : \G \too \G'$. 
\end{df}

\begin{rem}
We note that the same statement as Lemma \ref{metgrp} holds for extended metric groups. Further, the relationship between extended metric spaces and normed groups similar to Proposition \ref{metgrp} holds if we replace the codomain of norms by $[0, \infty]$.
\end{rem}

\begin{df}
Let $\G$ be an extended metric group and $X$ be a metric space. An {\it extended $\G$-metric action} $F$ is a correspondence $X \ni x \mapsto Fx = \G$ and $F_{xx'} \in \G$ such that 
\begin{itemize}
\item $F_{xx} = e, F_{xx'} = F_{x'x}^{-1}$,
\item $d_\G(F_{x'x''}F_{xx'}, F_{xx''}) \leq d_X(x, x')+d_X(x', x'') - d_X(x, x'')$.
\end{itemize}
For extended $\G$-metric actions $F$ and $G$, an {\it extended $\G$-metric transformation} $\theta : F\Longrightarrow G$ is a family of elements $\{\theta_x \in \G\}_{x\in X}$ such that $G_{xx'}\theta_x = \theta_{x'}F_{xx'}$. We denote the category of extended $\G$-metric actions and extended $\G$-metric transformations by $\EPMet_X^\G$. 
\end{df}
The following is obtained from the same arguments in subsection \ref{homcla} by replacing the term `metric group' by `extended metric group'.
\begin{prop}
For an extended metric group $\G$ and a  metric space $X$,  the categories $\EPMet^{\G}_X$ and $  \Hom (\pi^m_1(X, x_0), \G)$ are equivalent. \qed
\end{prop}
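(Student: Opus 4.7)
The plan is to mimic, essentially verbatim, the proof of Proposition \ref{hompri} in subsection \ref{homcla}, replacing $\PMet_X^\G$ by $\EPMet_X^\G$ and allowing $\G$ to take values in $[0,\infty]$ throughout. Since everything in the original argument is expressed in terms of homomorphisms and Lipschitz inequalities that are still meaningful in $[0,\infty]$, the only thing to verify is that no step uses finiteness of distances in an essential way.

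First, I would construct a functor $A : \Hom(\pi_1^m(X, x_0), \G) \too \EPMet_X^\G$ by sending $\varphi$ to the extended $\G$-metric action $F_\varphi$ given by
\[
(F_\varphi)_{xx'} = \varphi[(x_0, x', x, x_0)] \in \G.
\]
The unit and inverse axioms are formal consequences of the monoid structure on $P(X, x_0)$, and the Lipschitz condition $d_\G(F_{x'x''}F_{xx'}, F_{xx''}) \leq d_X(x,x') + d_X(x',x'') - d_X(x,x'')$ reduces, using Lemma \ref{metgrp} (which holds identically for extended metric groups by the remark following Definition \ref{extmet}), to the inequality $d_\G(\varphi[(x_0, x, x'', x', x, x_0)], e) \leq d_{\pi_1^m(X, x_0)}([(x_0, x, x'', x', x, x_0)], [(x_0, x_0)])$, which is just the Lipschitz property of $\varphi$. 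On morphisms, a conjugation witness $h \in \G$ with $\varphi = h^{-1}\psi h$ is sent to the transformation $\theta_x = h$ for every $x$; the compatibility $(F_\psi)_{xx'} \theta_x = \theta_{x'} (F_\varphi)_{xx'}$ follows directly.

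Next I would construct the inverse $B : \EPMet_X^\G \too \Hom(\pi_1^m(X, x_0), \G)$ by
\[
\varphi_F[(x_0, x_1, \dots, x_n, x_0)] = F_{x_1 x_0} F_{x_2 x_1} \cdots F_{x_n x_{n-1}} F_{x_0 x_n}.
\]
Well-definedness on $\pi_1^m(X, x_0)$ amounts to showing that this map is Lipschitz on $P(X, x_0)$ (it descends to the quasi-metric space $Q(X, x_0)$ and then to the Kolmogorov quotient automatically); the Lipschitz estimate on an edge of $G(X, x_0)$ reduces to the extended $\G$-metric action inequality for a single triangle, again with $[0,\infty]$-valued distances posing no obstacle since $a \leq \infty$ for all $a$ and $\infty + \infty = \infty$. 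A $\G$-metric transformation $\theta : F \Longrightarrow F'$ is sent to the conjugation morphism $\theta_{x_0} \in \G$, and the identity $\theta_{x_0}^{-1} \varphi_{F'} \theta_{x_0} = \varphi_F$ is checked by telescoping as in the original argument.

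Finally, the natural isomorphisms $BA \cong \mathrm{id}$ and $AB \cong \mathrm{id}$ are the same telescoping calculation and the isomorphism $\theta_x = F_{x_0 x}$ used in the proof of Proposition \ref{hompri}. The main point to watch is that when $\G$ is merely an extended metric group, the element $F_{x_0 x}$ may lie outside the unit component of $\Aut Y$-type examples, but it is still a well-defined element of $\G$ and the Lipschitz compatibility holds pointwise; since our entire argument manipulates elements of $\G$ algebraically, extending values to $[0,\infty]$ changes nothing substantive. The only step where I would pause to double-check is the verification that $\varphi_F$ is Lipschitz on $\pi_1^m(X, x_0)$ when intermediate distances in $\G$ are infinite, but the inequality trivially holds with $\infty$ on the right-hand side.
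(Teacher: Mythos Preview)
Your proposal is correct and follows exactly the paper's own approach: the paper simply states that the result ``is obtained from the same arguments in subsection \ref{homcla} by replacing the `metric group' by `extended metric group','' and you have spelled out precisely this adaptation. One small remark: your closing caution about ``intermediate distances in $\G$ being infinite'' is actually a non-issue, since the Lipschitz bound on each edge of $G(X,x_0)$ comes directly from the extended $\G$-metric action inequality, whose right-hand side is a finite quantity computed in the genuine metric space $X$; no infinite values enter the estimate.
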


Further, the arguments in subsection \ref{bddfiber} can be applied for extended case, and we obtain the following.
\begin{prop}\label{extpmetcore}
For all metric spaces $X$ and $Y$, the categories $\EPMet_X^{\Aut Y}$ and ${\sf core}\Fib_X^Y$ are equivalent. Hence metric fibrations with fiber $Y$ are classified by $ \Hom (\pi^m_1(X, x_0), \G)$. \qed
\end{prop}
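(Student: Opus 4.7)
The plan is to verify that every step of Subsection \ref{bddfiber} generalizes verbatim from metric groups to extended metric groups, using the extended analogue of Proposition \ref{hompri} that is stated just above the target proposition. The only conceptual change is that individual elements of $\Aut Y$ may now have infinite distance from the identity; however, all the distance estimates that enter the proof are controlled by finite quantities on the base $X$.

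First, I would define the functor $-\curvearrowright Y : \EPMet_X^{\Aut Y} \too \Met_X^{Y}$ by the same formula as in the bounded case: $(F\curvearrowright Y)x = Y$ and $(F\curvearrowright Y)_{xx'} = F_{xx'} \in \Aut Y$ acting on $Y$. The metric action axiom follows because the inequality
\[
d_{\Aut Y}(F_{x'x''}F_{xx'}, F_{xx''}) \leq d_X(x,x') + d_X(x',x'') - d_X(x,x'')
\]
in $\Aut Y$ unfolds to $\sup_{y} d_Y(F_{x'x''}F_{xx'}y, F_{xx''}y) \leq d_X(x,x')+d_X(x',x'')-d_X(x,x'')$, whose right hand side is always finite. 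Faithfulness is immediate from the definition.

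Next, I would establish split essential surjectivity by the same construction $\Aut F$ as in the bounded case: pick isometries $\varphi_x : Y \cong Fx$ via the axiom of choice and set $(\Aut F)_{xx'} = \varphi_{x'}^{-1}F_{xx'}\varphi_x \in \Aut Y$. The verification that $\Aut F \in \EPMet_X^{\Aut Y}$ is literally the same supremum computation as in the bounded proof, and the componentwise maps $\varphi_x$ assemble into an isomorphism $\Aut F \curvearrowright Y \cong F$ in $\Met_X^{Y}$. The only difference from the bounded case is that the elements $(\Aut F)_{xx'}$ need not lie in the finite-distance component of $\Aut Y$, which is precisely why we passed from $\PMet$ to $\EPMet$.

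For fullness, I would observe that Lemma \ref{hatfff} and Corollary \ref{singlefiber} carry over verbatim to the extended setting, giving a category equivalence $\widehat{(-)}^{x_0} : \EPMet_X^{\Aut Y} \too \wh{\EPMet}_X^{\Aut Y, x_0}$ (the estimates use only the metric action axiom, never a finiteness assumption on $\Aut Y$). Then fullness of $\wh{-\curvearrowright Y}^{x_0} : \EPMet_X^{\Aut Y} \too \core \wh{\Met}_X^{Y, x_0}$ is proved as in Corollary \ref{changefiber}: a morphism $\theta$ on the right has components $\theta_x : Y \too Y$ that are isometries, hence elements of $\Aut Y$, which automatically assemble into an extended $\Aut Y$-metric transformation.

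Combining these three steps yields $\EPMet_X^{\Aut Y} \simeq \core \wh{\Met}_X^{Y, x_0}$, and chaining with Lemma \ref{metfibpmet} gives $\EPMet_X^{\Aut Y} \simeq \core \Fib_X^Y$. The final statement about classification by $\Hom(\pi_1^m(X,x_0), \Aut Y)$ then follows from the extended analogue of Proposition \ref{hompri}. The main ``obstacle'' is really just the bookkeeping of checking that no step of Subsection \ref{bddfiber} secretly used that $\Aut Y$ is a genuine metric group rather than an extended one; since the governing inequalities are always controlled by the base metric $d_X$, this turns out to be a purely cosmetic generalization.
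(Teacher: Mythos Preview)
Your proposal is correct and follows exactly the route the paper intends: the paper's own ``proof'' is simply the sentence ``the arguments in subsection~\ref{bddfiber} can be applied for the extended case,'' and you have faithfully unpacked what that means step by step. Your observation that every distance estimate in those arguments is bounded by the finite quantity $d_X(x,x')+d_X(x',x'')-d_X(x,x'')$ is precisely the reason the generalization goes through, so nothing is missing.
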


\section{Cohomological interpretation}
In this section, we give a cohomological classification of $\G$-torsors. It is an analogy of the $1$-\v{C}ech cohomology. Before giving the definition, we introduce the following technical term.
\begin{df} \label{degdeg}
Let $X$ be a metric space, and $x_1, x_2, x_3 \in X$. We denote the subset $\{x_1, x_2, x_3\} \subset X$ by $\Delta(x_1, x_2, x_3)$ and call it a {\it triangle}. We define the {\it degeneracy degree of the triangle} $\Delta(x_1, x_2, x_3)$ by 
\[
|\Delta(x_1, x_2, x_3)| := \min \left\{d_X(x_i, x_j) + d_X(x_j, x_k) - d_X(x_i, x_k) \mid \{i, j, k\} = \{1, 2, 3\}\right\}.
\]
Note that it is enough to consider $i, j, k$'s running in the cyclic order to obtain the above minimum.
\end{df}
The following is the definition of our `$1$-\v{C}ech chomology'.
\begin{df}\label{cech}
Let $X$ be a metric space and suppose that points of $X$ are indexed as $X = \{x_i\}_{i \in I}$. For a metric group $\G$, we define the {\it $1$-cohomology of $X$ with coefficients in} $\G$ as the category $\Hc^1(X; \G)$ by
\[
\Ob\Hc^1(X; \G)  = \left\{(a_{ijk}) \in \G^{I^3} \mid a_{ijk}a_{kj\ell } = a_{ij\ell}, |a_{ijk}a_{jki}a_{kij}| \leq |\Delta(x_i, x_j, x_k)|\right\},
\]
and 
\[
 \Hc^1(X; \G)((a_{ijk}), (b_{ijk})) = \left\{(f_{ij}) \in \G^{I^2} \mid a_{ijk}f_{jk} = f_{ij}b_{ijk} \right\},
\]
where we denote the conjugation invariant norm on $\G$ by $|-|$. We call an object of $\Hc^1(X; \G)$ {\it a cocycle}. Obviously, the above constructions are independent from the choice of the index $I$.
\end{df}
\begin{rem}
Note that, for a cocycle $(a_{ijk}) \in \Hc^1(X; \G)$, the condition $a_{ijk}a_{kj\ell} = a_{ij\ell}$ implies that $a_{iji} = e$ and $a_{ijk} = a_{kji}^{-1}$ for all $i, j, k \in I$. Further, for a morphism $(f_{ij})$, we have $f_{ij} = f_{ji}$ from the condition $a_{ijk}f_{jk} = f_{ij}b_{ijk}$ and $a_{iji} = b_{iji} = e$.
\end{rem}
\begin{lem}
The  $1$-cohomology of $X$ with coefficients in $\G$ is well defined. That is, $\Hc^1(X; \G)$ is indeed a category, in fact a groupoid.
\end{lem}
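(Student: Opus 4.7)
The plan is to exhibit identities, a composition law, an associativity argument, and an inversion operation, verifying at each step that the defining condition $a_{ijk}f_{jk}=f_{ij}b_{ijk}$ is preserved. None of these verifications will use the norm inequality in the cocycle condition, so well-definedness reduces to routine manipulations in the group $\G$.

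First, for any cocycle $(a_{ijk})$, I would take the identity morphism to be the constant family $(e)_{i,j \in I}$; it trivially satisfies $a_{ijk}e = ea_{ijk}$. Next, given $(f_{ij}) : (a_{ijk}) \too (b_{ijk})$ and $(g_{ij}) : (b_{ijk}) \too (c_{ijk})$, I would define $(g\circ f)_{ij} := f_{ij}g_{ij}$. The morphism condition for the composite follows from
\[
a_{ijk}(f_{jk}g_{jk}) = (a_{ijk}f_{jk})g_{jk} = f_{ij}b_{ijk}g_{jk} = f_{ij}g_{ij}c_{ijk}.
\]
Associativity is immediate from associativity in $\G$, and the identity laws $(e)\circ (f_{ij}) = (f_{ij}) = (f_{ij})\circ (e)$ are clear. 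This gives a category structure.

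For the groupoid statement, I would define the candidate inverse of $(f_{ij}) : (a_{ijk}) \too (b_{ijk})$ to be $(f_{ij}^{-1})$. Multiplying the identity $a_{ijk}f_{jk}=f_{ij}b_{ijk}$ on the left by $f_{ij}^{-1}$ and on the right by $f_{jk}^{-1}$ yields $b_{ijk}f_{jk}^{-1} = f_{ij}^{-1}a_{ijk}$, which is exactly the morphism condition for $(f_{ij}^{-1}) : (b_{ijk}) \too (a_{ijk})$. The two compositions give $(f_{ij}f_{ij}^{-1}) = (e) = (f_{ij}^{-1}f_{ij})$, so every morphism is an isomorphism.

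I do not anticipate any real obstacle; the only point requiring a bit of care is keeping the order of multiplication in $\G$ straight, since the defining relation is one-sided (the cocycle $a_{ijk}$ multiplies on the left and $b_{ijk}$ on the right, so the composite must be $f_{ij}g_{ij}$ rather than $g_{ij}f_{ij}$). One could also note, as a sanity check compatible with the preceding remark, that the conditions automatically force $f_{ii} = e$ (set $j = k = i$ and use $a_{iii} = b_{iii} = e$), which is consistent with the identity morphism being the constant family $(e)$.
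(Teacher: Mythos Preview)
Your proposal is correct and follows essentially the same route as the paper: identity given by the constant family $(e)$, composition $(g\circ f)_{ij}=f_{ij}g_{ij}$ verified via the chain $a_{ijk}f_{jk}g_{jk}=f_{ij}b_{ijk}g_{jk}=f_{ij}g_{ij}c_{ijk}$, associativity from that of $\G$, and inverse $(f_{ij}^{-1})$. Your explicit check that $(f_{ij}^{-1})$ satisfies the morphism condition and your remark on $f_{ii}=e$ are small additions but do not change the approach.
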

\begin{proof}
Let $(a_{ijk}), (b_{ijk}), (c_{ijk}) \in \Ob\Hc^1(X; \G)$, and $(f_{ij}) : (a_{ijk}) \too  (b_{ijk})$ and $(f'_{ij}) : (b_{ijk}) \too  (c_{ijk})$ be morphisms. Then $(f'\circ f)_{ij} := f_{ij}f'_{ij}$ defines a morphism $((f'\circ f)_{ij}) : (a_{ijk}) \too  (c_{ijk})$ since we have 
\[
a_{ijk}f_{jk}f'_{jk} = f_{ij}b_{ijk}f'_{jk} = f_{ij}f'_{ij}c_{ijk}.
\]
It obviously satisfies the associativity. The identity on $a_{ijk}$ is obviously defined by $e_{ij} = e$, where $e$ denotes the unit of $\G$. Further, $(f^{-1}_{ij})$ defines a morphism $(f^{-1}_{ij}) : b_{ijk} \too a_{ijk}$ that is the inverse of $(f_{ij})$. 
\end{proof}

\begin{prop}\label{pasting}
We have a faithful functor $\beta : \Hc^1(X; \G) \too  \Tor^{\G}_X$.
\end{prop}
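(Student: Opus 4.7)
The plan is to construct $\beta$ via the equivalence $\PMet_X^\G \simeq \Tor_X^\G$ of Proposition \ref{pmettor}, turning a cocycle $(a_{ijk}) \in \Hc^1(X;\G)$ into a $\G$-metric action $F_a : X \too \Met$. The prescription is $F_a x_i := \G$ with $F_{x_ix_j}$ equal to left multiplication by some $f_{ij} \in \G$ to be extracted from the cocycle. Fixing a reference index $i_0 \in I$, observe that the relation $a_{ijk}a_{kj\ell} = a_{ij\ell}$ (for the middle index $j$ fixed) is a standard $1$-cocycle in the outer indices and admits a splitting $a_{ijk} = a_{iji_0}\cdot a_{i_0 jk}$; in particular any family $\phi_j(i) := a_{iji_0}$ satisfies $a_{ijk} = \phi_j(i)\phi_j(k)^{-1}$. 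The transition $f_{ij}$ is built out of such $\phi_j$'s so that $f_{ii} = e$ and $f_{ji} = f_{ij}^{-1}$, both of which follow from the basic identities $a_{iji} = e$ and $a_{ijk} = a_{kji}^{-1}$ noted in the remark after Definition \ref{cech}.

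Next I would verify the quasi-cocycle bound $d_\G(F_{x_jx_k}F_{x_ix_j}g, F_{x_ix_k}g) \leq d_X(x_i, x_j) + d_X(x_j, x_k) - d_X(x_i, x_k)$ required of a $\G$-metric action. By bi-invariance of $d_\G$ (Lemma \ref{metgrp}) and conjugation invariance of $|-|$, this is equivalent to $|f_{ik}^{-1}f_{jk}f_{ij}| \leq |\Delta(x_i, x_j, x_k)|$, which by construction reduces to a cyclic rearrangement of $|a_{ijk}a_{jki}a_{kij}|$ and hence is bounded exactly by the defining inequality of $\Hc^1$. Applying the Grothendieck construction functor $E$ (Proposition \ref{pfunct}), we obtain a $\G$-torsor $\beta(a) := E(F_a) \in \Tor_X^\G$, since the transitions are left multiplications in $\G$.

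On morphisms, given $(f_{ij}) : (a_{ijk}) \to (b_{ijk})$ in $\Hc^1$, I would produce a $\G$-metric transformation $\theta : F_a \Longrightarrow F_b$ whose component $\theta_{x_i}$ is left multiplication by $f_{ii}$, using the defining relation $a_{ijk}f_{jk} = f_{ij}b_{ijk}$ (specialized appropriately) to check the naturality square $G_{x_ix_j}\theta_{x_i} = \theta_{x_j}F_{x_ix_j}$. Applying $E$ again yields the $\G$-morphism $\beta(f)$. Functoriality is immediate from the definitions of identity ($e_{ij} = e$) and composition ($(f'\circ f)_{ij} = f_{ij}f'_{ij}$) in $\Hc^1$. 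For faithfulness, $\beta(f)$ determines the underlying metric transformation $\theta$ via Proposition \ref{pmettor}, its value on the basepoint $(x_i, e)$ of each fiber recovers $f_{ii}$, and the morphism condition $a_{ijk}f_{jk} = f_{ij}b_{ijk}$ then forces every other $f_{ij}$.

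The main obstacle is pinning down the precise formula for $f_{ij}$ in terms of the three-index cocycle: a two-index transition function of a $\G$-metric action must be extracted from three-index data, which forces a reference choice, and the formula must arrange the cyclic product $f_{ik}^{-1}f_{jk}f_{ij}$ to reduce exactly to a conjugate of $a_{ijk}a_{jki}a_{kij}$ so that the two $\Delta$-bounds line up without a remainder term. Once that formula is in place, each remaining verification is a direct application of the cocycle identity $a_{ijk}a_{kj\ell} = a_{ij\ell}$ and its immediate consequences.
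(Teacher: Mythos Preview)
Your route through $\PMet_X^\G$ is genuinely different from the paper's, which builds the torsor directly: it sets $\beta(a_{ijk}) = \bigl(\coprod_{(i,j)}\G_{ij}\bigr)/{\sim}$ with $\G_{ij} = \G\coprod\G$ and the gluing relation $g^{ij}_j \sim (ga_{ijk})^{jk}_j$, defines the metric explicitly on the quotient, and checks the triangle inequality using the bound $|a_{ijk}a_{jki}a_{kij}|\le|\Delta|$. On morphisms the paper uses \emph{all} of $(f_{ij})$ at once via $[g^{ij}_\bullet]\mapsto[(gf_{ij})^{ij}_\bullet]$, and faithfulness is read off directly. No basepoint is needed.

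Your approach can be made to work, but as written it has a gap and an error. The gap you already name: you never write down the transition $f_{ij}$, and this is not a detail---it is the whole content of the construction. A formula that does the job is $f_{ij}:=a_{i_0ji}a_{jii_0}$; then $f_{ii}=a_{i_0ii}a_{i_0ii}^{-1}=e$, $f_{ji}=f_{ij}^{-1}$, and using $a_{\alpha j\beta}a_{\beta j\gamma}=a_{\alpha j\gamma}$ together with conjugation invariance one gets $|f_{ik}^{-1}f_{jk}f_{ij}|=|a_{ikj}a_{kji}a_{jik}|\le|\Delta(x_i,x_j,x_k)|$. The error is in your morphism component: with this (or any natural) choice of transitions, $\theta_{x_i}=f_{ii}\cdot$ does \emph{not} satisfy the naturality square $h_{ij}\theta_{x_i}=\theta_{x_j}g_{ij}$ in general. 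What does work is $\theta_{x_i}=f_{i_0 i}^{-1}\cdot$; then the defining relation $a_{\alpha\beta\gamma}f_{\beta\gamma}=f_{\alpha\beta}b_{\alpha\beta\gamma}$ specialized at $\alpha=i_0$ both verifies naturality and yields faithfulness, since it recovers every $f_{\beta\gamma}$ from the $f_{i_0\beta}$'s. So the strategy is sound, but the two concrete formulas it hinges on are either absent or incorrect as stated.
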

\begin{proof}
For $(a_{ijk}) \in \Ob \Hc^1(X; \G)$, we define a $\G$-torsor $\beta (a_{ijk})$ as follows. Let $\mathcal{U} = \coprod_{(i, j) \in I^2}\G_{ij}$, where $\G_{ij} = \G^{ij}_i \coprod \G^{ij}_j = \G \coprod \G$. We write an element of $\G^{ij}_\bullet$ as $g^{ij}_\bullet$ and we  denote the identification $\G = \G^{ij}_\bullet$ by the map $\G \too \G^{ij}_\bullet ; g \mapsto g^{ij}_\bullet$, where $\bullet \in \{i, j\}$ for all $i \neq j \in I$. We define an equivalence relation $\sim$ on $\mathcal{U}$ generated by 
\[
g^{ij}_j \sim (ga_{ijk})^{jk}_j.
\]
Note that we have $g^{ij}_j\sim g^{ji}_j$ for all $i, j \in I$. We denote the quotient set $\mathcal{U}/\sim$ by $\beta (a_{ijk})$. Then we have a surjective map $\pi : \beta (a_{ijk}) \too X$ defined by $\pi [g^{ij}_j] = x_j$. For this map $\pi$, we have the following. 
\begin{lem}\label{gfiber}
For all $i, j \in I$, the map $\G \too \pi^{-1}x_j ; g \mapsto [g^{ij}_j]$ is a bijection.
\end{lem}
\begin{proof}
The surjectivity is clear. We show the injectivity. Suppose that we have $[g^{ij}_j] = [h^{ij}_j]$ for $g, h \in \G$. That is, we have elements $a_{k_0jk_1}, a_{k_1jk_2},  \dots, a_{k_{N-1}jk_N} \in \G$ such that $ga_{k_0jk_1} \dots a_{k_{N-1}jk_N} = h$ and $k_0 = k_N = i$. Then the condition $a_{ijk}a_{kj\ell } = a_{ij\ell}$ implies that $ga_{iji}=h$, hence $g = h$. 
\end{proof}
Note that Lemma \ref{gfiber} ensures that $[g^{ij}_j] = [h^{jk}_j]$ implies $h = ga_{ijk}$. Now we can define a distance function $d_{\beta (a_{ijk})}$ on $\beta (a_{ijk})$ as follows. Let $\varepsilon_i \in \pi^{-1}x_i$ and $ \varepsilon_j \in \pi^{-1}x_j$. Then there uniquely exist $g, h \in \G$ such that $[g^{ij}_i] = \varepsilon_i$ and $[h^{ij}_j] = \varepsilon_j$ by Lemma \ref{gfiber}. Then we define that
\[
d_{\beta (a_{ijk})}(\varepsilon_i, \e_j) = d_X(x_i, x_j) + d_\G(g, h). 
\]
The non-degeneracy is clear. The symmetry follows from that $[g^{ij}_i] =  [g^{ji}_i]$. The triangle inequality is verified as follows. Let $\varepsilon_i \in \pi^{-1}x_i$, $\varepsilon_j \in \pi^{-1}x_j$ and $\varepsilon_k \in \pi^{-1}x_k$.  Suppose that we have $[g^{ij}_i] = \varepsilon_i = [g'^{ik}_i]$, $[h^{ij}_j] = \varepsilon_j = [h'^{jk}_j]$,  and $[m^{jk}_k] = \varepsilon_k = [m'^{ik}_k]$. Then we have $g = g'a_{kij}$, $h' = ha_{ijk}$ and $m = m'a_{ikj}$, hence we obtain that
\begin{align*}
&\ d_{\beta (a_{ijk})}(\varepsilon_i, \varepsilon_j) + d_{\beta (a_{ijk})}(\varepsilon_j, \varepsilon_k) \\
&= d_X(x_i, x_j) + d_\G(g, h) + d_X(x_j, x_k) + d_\G(h', m) \\
&= d_X(x_i, x_j) + d_X(x_j, x_k) + d_\G(g'a_{kij}, h) + d_\G(ha_{ijk}, m'a_{ikj}) \\
&= d_X(x_i, x_j) + d_X(x_j, x_k) + d_\G(g'a_{kij}, h) + d_\G(ha_{ijk}a_{jki}a_{kij}, m'a_{kij}) \\
&\ + d_\G(h, ha_{ijk}a_{jki}a_{kij}) - d_\G(h, ha_{ijk}a_{jki}a_{kij})  \\
&\geq d_X(x_i, x_j) + d_X(x_j, x_k) + d_\G(g'a_{kij}, m'a_{kij})  - |a_{ijk}a_{jki}a_{kij}|  \\
&\geq d_X(x_i, x_j) + d_X(x_j, x_k) + d_\G(g', m')  - |\Delta(x_i, x_j, x_k)|  \\
&\geq d_X(x_i, x_k)  + d_\G(g', m')   \\
&= d_{\beta (a_{ijk})}(\varepsilon_i, \varepsilon_k).
\end{align*}
Now a map $\pi : \beta (a_{ijk}) \too X$ is obviously a 1-Lipschitz map. We verify that it is a metric fibration as follows. Let $x_i, x_j \in X$ and $\varepsilon_i \in \pi^{-1}x_i$. Suppose that we have $\varepsilon_i = [g^{ij}_i]$ for $g \in \G$. Then $\varepsilon_j := [g^{ij}_j] \in \pi^{-1}x_j$ is the unique element in $\pi^{-1}x_j$ such that $d_{\beta (a_{ijk})}(\varepsilon_i, \varepsilon_j) = d_X(x_i, x_j)$. Also, for $\varepsilon'_j := [h^{ij}_j] \in \pi^{-1}x_j$, we have $d_{\beta (a_{ijk})}(\varepsilon_i, \varepsilon'_j) = d_X(x_i, x_j) + d_\G(g, h) = d_{\beta (a_{ijk})}(\varepsilon_i, \varepsilon_j) + d_{\beta (a_{ijk})}(\varepsilon_j, \varepsilon'_j)$. Finally, we equip the metric fibration $\pi : \beta (a_{ijk}) \too X$ with a right action by $\G$ as $[g^{ij}_\bullet]h = [(h^{-1}g)^{ij}_\bullet]$ for all $i, j \in I$ and $\bullet \in \{i, j\}$. This is well defined since we have that 
% \[
% [g^{ji}_j]h = [(h^{-1}g)^{ji}_{j}] = [(h^{-1}g)^{ij}_{j}] = [g^{ij}_j]h,
% \]
% and 
\[
[(ga_{ijk})^{jk}_j]h = [(h^{-1}ga_{ijk})^{jk}_j] = [(h^{-1}g)^{ij}_j] = [g^{ij}_j]h.
\]
It is straightforward to verify that this is a $\G$-torsor.

Next we show the functoriality. Let $(f_{ij}) : (a_{ijk}) \too (b_{ijk}) \in  \Hc^1(X; \G)$. We construct a map $f_\ast : \beta(a_{ijk}) \too \beta(b_{ijk})$ by $[g^{ij}_{\bullet}] \mapsto [(gf_{ij})^{ij}_{\bullet}]$ for all $i, j \in I$ and $\bullet \in \{i, j\}$. It is well defined since we have that
% \[
% [g^{ji}_j] \mapsto [(gf_{ji})^{ij}_{j}] = [(gf_{ij})^{ij}_{j}]
% \]
% and 
\[
[(ga_{ijk})^{jk}_j] \mapsto [(ga_{ijk}f_{jk})^{jk}_j]= [(gf_{ij}b_{ijk})^{jk}_{j}] = [(gf_{ij})^{ij}_{j}].
\]
The map $f_\ast$ obviously preserves fibers, and is an isometry since we have that 
\begin{align*}
d_{\beta(b_{ijk})}(f_\ast [g^{ij}_i], f_\ast [h^{ij}_j]) &= d_{\beta(b_{ijk})}( [(gf_{ij})^{ij}_i], [(hf_{ij})^{ij}_j]) \\
&= d_X(x_i, x_j) + d_\G(gf_{ij}, hf_{ij}) \\
&= d_X(x_i, x_j) + d_\G(g, h) \\
&= d_{\beta(a_{ijk})}([g^{ij}_i], [h^{ij}_j]).
\end{align*}
Further, it is $\G$-equivariant since we have that
\[
(f_\ast[g^{ij}_{j}])m = [(gf_{ij})^{ij}_{j}]m = [(m^{-1}gf_{ij})^{ij}_{j}] = f_\ast([g^{ij}_{j}]m).
\]
The faithfulness is obvious from the construction. 
\end{proof}
\begin{prop}
The functor $\beta : \Hc^1(X; \G) \too  \Tor^{\G}_X$ is full.
\end{prop}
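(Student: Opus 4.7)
The plan is to invert the construction of Proposition \ref{pasting}. Given a $\G$-morphism $\varphi : \beta(a_{ijk}) \too \beta(b_{ijk})$, I want to produce a morphism $(f_{ij}) : (a_{ijk}) \too (b_{ijk})$ in $\Hc^1(X;\G)$ such that $\beta((f_{ij})) = f_\ast = \varphi$. The key observation is that, since $\varphi$ preserves fibers and Lemma \ref{gfiber} identifies each fiber $\pi^{-1}x_j$ with $\G$ via $g \mapsto [g^{ij}_j]$, the map $\varphi$ is rigidly determined by its value at the ``basepoints'' $[e^{ij}_j]$.

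First I would define $f_{ij} \in \G$ by the equation $\varphi([e^{ij}_j]) = [f_{ij}{}^{ij}_j]$ in $\beta(b_{ijk})$, which exists and is unique by Lemma \ref{gfiber}. Using the right $\G$-action $[g^{ij}_\bullet]h = [(h^{-1}g)^{ij}_\bullet]$ and the $\G$-equivariance of $\varphi$, I get $\varphi([g^{ij}_j]) = \varphi([e^{ij}_j]\cdot g^{-1}) = [f_{ij}{}^{ij}_j]\cdot g^{-1} = [(gf_{ij})^{ij}_j]$, and similarly $\varphi([g^{ij}_i]) = [(gf_{ij})^{ij}_i]$. This is exactly the formula defining $f_\ast$, so the equality $f_\ast = \varphi$ will follow once I verify that $(f_{ij})$ is a well-defined morphism of cocycles.

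Next I would check the symmetry $f_{ij} = f_{ji}$. Since the equivalence relation on $\mathcal{U}$ identifies $g^{ij}_j \sim g^{ji}_j$ (take $k=i$ in the relation and use $a_{iji}=e$), we have $[e^{ij}_j] = [e^{ji}_j]$ in $\beta(a_{ijk})$, hence $[f_{ij}{}^{ij}_j] = [f_{ji}{}^{ji}_j]$ in $\beta(b_{ijk})$. Since the same identification $f_{ij}{}^{ij}_j \sim f_{ij}{}^{ji}_j$ holds in $\beta(b_{ijk})$, applying Lemma \ref{gfiber} in the fiber over $x_j$ yields $f_{ij} = f_{ji}$.

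The main cocycle relation $a_{ijk}f_{jk} = f_{ij}b_{ijk}$ is the only step with real content, and it is the part I expect to require the most care. I would apply $\varphi$ to both sides of the defining equivalence $[e^{ij}_j] = [a_{ijk}{}^{jk}_j]$ in $\beta(a_{ijk})$. The left side gives $[f_{ij}{}^{ij}_j]$, and the right side, by equivariance, gives $\varphi([e^{jk}_j])\cdot a_{ijk}^{-1} = [(a_{ijk}f_{jk})^{jk}_j]$. Thus $[f_{ij}{}^{ij}_j] = [(a_{ijk}f_{jk})^{jk}_j]$ in $\beta(b_{ijk})$; comparing this with the relation $f_{ij}{}^{ij}_j \sim (f_{ij}b_{ijk})^{jk}_j$ and invoking Lemma \ref{gfiber} once more in the fiber over $x_j$ yields $f_{ij}b_{ijk} = a_{ijk}f_{jk}$. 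This confirms $(f_{ij}) \in \Hc^1(X;\G)((a_{ijk}),(b_{ijk}))$, and by construction $\beta((f_{ij})) = \varphi$, completing the proof of fullness.
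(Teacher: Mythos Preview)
Your proof is correct and follows essentially the same route as the paper's: your $f_{ij}$ is exactly the paper's $\varphi_{ij}e$, and the cocycle relation is extracted the same way, by comparing two representatives in the fiber over $x_j$ via Lemma~\ref{gfiber}.

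One small point to tighten. The step ``and similarly $\varphi([g^{ij}_i]) = [(gf_{ij})^{ij}_i]$'' is not actually parallel to the $j$-case: equivariance alone only gives $\varphi([g^{ij}_i]) = \varphi([e^{ij}_i])\cdot g^{-1}$, and you have not yet computed $\varphi([e^{ij}_i])$, since $f_{ij}$ was defined through the $j$-component. What is needed here is that $\varphi$ preserves lifts (Lemma~\ref{morphcomm}): since $[e^{ij}_j]$ is the lift of $x_j$ along $[e^{ij}_i]$ and the lifting correspondence is a bijection (Lemma~\ref{liftfunct}), $\varphi([e^{ij}_j])=[f_{ij}{}^{ij}_j]$ forces $\varphi([e^{ij}_i])=[f_{ij}{}^{ij}_i]$. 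This is precisely what the paper invokes to obtain $\varphi_{ij}=\varphi_{ji}$. You use this ``similarly'' claim twice --- once for the symmetry $f_{ij}=f_{ji}$ and once in the cocycle computation when you write $\varphi([e^{jk}_j])=[f_{jk}{}^{jk}_j]$ (again the ``other'' component for the pair $(j,k)$) --- so it is worth making the appeal to Lemma~\ref{morphcomm} explicit. With that adjustment, your argument is complete.
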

\begin{proof}
Let $(a_{ijk}), (b_{ijk}) \in \Ob \Hc^1(X; \G)$ be cocycles, and suppose that we have a morphism $\varphi : \beta(a_{ijk}) \too \beta(b_{ijk})$ in $\Tor^{\G}_X$. We denote the projections $\beta(a_{ijk}) \too X$ and $\beta(b_{ijk}) \too X$ by $\pi_a$ and $\pi_b$ respectively. For all $i, j \in I$, we have bijections $A_{ij} : \G \too \pi_{a}^{-1}x_j$ and $B_{ij} : \G \too \pi_{b}^{-1}x_j$ given by $g \mapsto [g^{ij}_j]$ by Lemma \ref{gfiber}. Define a map $\varphi_{ij} = B_{ij}^{-1}\varphi A_{ij} : \G \too \G$. Note that we have $\varphi[g^{ij}_j] = [(\varphi_{ij}g)^{ij}_j]$. Now the $\G$-equivariance of $\varphi$ implies that 
\[
\varphi[g^{ij}_j] = \varphi[(ge)^{ij}_j] = (\varphi[e^{ij}_j])g^{-1} = [(\varphi_{ij}e)^{ij}_j]g^{-1} = [(g\varphi_{ij}e)^{ij}_j],
\]
which implies that $\varphi_{ij}g = g\varphi_{ij}e$ by Lemma \ref{gfiber}. From this, we obtain that
\[
\varphi[(ga_{ijk})^{jk}_j] = \varphi[(ga_{ijk})^{kj}_j] = [(\varphi_{kj}(ga_{ijk}))^{kj}_j] = [(ga_{ijk}\varphi_{kj}e)^{kj}_j].
\]
Since we have $[g^{ij}_j] = [(ga_{ijk})^{jk}_j]$, we obtain that  $a_{ijk}\varphi_{kj}e = (\varphi_{ij}e)b_{ijk}$ by Lemma \ref{gfiber}. Further, since the lift of $x_j$ along $[g^{ij}_i]$ is $[g^{ij}_j]$ and $\varphi$ preserves the lift, the conditions $\varphi[g^{ij}_j] = [(\varphi_{ij}g)^{ij}_j]$ and $\varphi[g^{ji}_i] = [(\varphi_{ji}g)^{ji}_i]$ implies that $\varphi_{ij} = \varphi_{ji}$. Hence we obtain a morphism $(\varphi_{ij}e) : (a_{ijk}) \too (b_{ijk})$ in $\Hc^1(X; \G)$, which satisfies that $\beta (\varphi_{ij}e) = \varphi$ by construction. 
% Then we have a morphism $\varphi : \beta(a_{ijk}) \too \beta(b_{ijk})$ in $\Tor^{\G}_X$. Note that any local section of $\beta(a_{ijk})$ determines a local section of $\beta(b_{ijk})$ by $\varphi$. Further, these two local sections defines the same cocycle by the $\alpha$-construction in Lemma \ref{alpha} since $\varphi$ is $\G$-equivariant. Thus, by Lemma \ref{alpha}, it is enough to show that there are local sections $s$ and $s'$ of $\beta(a_{ijk})$ and $\beta(b_{ijk})$ such that $\alpha_s = a_{ijk}$ and $\alpha_{s'} = b_{ijk}$. Such a local section is given as follows. For a fixed $g \in \G$, we define a local section of $\beta(a_{ijk})$ by $(\varepsilon^{ij}_i, \varepsilon^{ij}_j) = ([g^{ij}_i], [g^{ij}_j])$. Then the cocycle $\alpha_s = (a'_{ijk})$ is defined as the unique element $a'_{ijk} \in \G$ such that $[(ga'_{ijk})^{ij}_j] = [g^{jk}_j]$. By the construction of $\beta(a_{ijk})$, we obtain that $a'_{ijk} = a_{ijk}$. 
\end{proof}
\begin{df}
Let $\pi : E \too X$ be a $\G$-torsor. For $x_i, x_j \in X$, we define a {\it local section of } $\pi$ {\it over a pair} $(x_i, x_j)$ to be a pair of points $(\varepsilon_i, \varepsilon_j) \in E^2$ such that $\pi \e_i = x_i, \pi \e_j = x_j$ and $\varepsilon_j$ is the lift of $x_j$ along $\varepsilon_i$.  We say that $((\varepsilon^{ij}_i,\varepsilon^{ij}_j))_{(i, j)\in I^2}$ is a {\it local section of} $\pi$ if each $(\varepsilon^{ij}_i, \varepsilon^{ij}_j)$ is a local section of $\pi$ over a pair $(x_i, x_j)$ and satisfies that $\varepsilon^{ij}_i = \varepsilon^{ji}_i$. 
\end{df}

\begin{prop}\label{alpha}
Let $\pi : E \too X$ be a $\G$-torsor. For a local section $s =((\varepsilon^{ij}_i,\varepsilon^{ij}_j))_{(i, j)\in I^2}$ of $\pi$, we can construct a cocycle $\alpha_s \pi \in \Ob \Hc^1(X;\G)$. Further, for all two local sections $s, s'$ of $\pi$, the corresponding cocycles $\alpha_s \pi$ and $\alpha_{s'} \pi$ are isomorphic.
\end{prop}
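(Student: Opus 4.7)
The plan is to define $\alpha_s\pi = (a_{ijk})$ where $a_{ijk} \in \G$ is the unique element with $\e^{ij}_j \cdot a_{ijk} = \e^{jk}_j$ in $\pi^{-1}x_j$; this is well-defined because each fiber is a right $\G$-torsor. A preliminary symmetry I would establish first is $\e^{ij}_\bullet = \e^{ji}_\bullet$ for both $\bullet \in \{i, j\}$: the hypothesis $\e^{ij}_i = \e^{ji}_i$ combined with the identity $(a_{x'})_x = a$ from Lemma \ref{liftfunct} gives $\e^{ij}_j = (\e^{ij}_i)_{x_j} = ((\e^{ji}_j)_{x_i})_{x_j} = \e^{ji}_j$. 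The cocycle identity $a_{ijk}a_{kjl} = a_{ijl}$ is then immediate, since $\e^{ij}_j \cdot a_{ijk}a_{kjl} = \e^{jk}_j \cdot a_{kjl} = \e^{kj}_j \cdot a_{kjl} = \e^{jl}_j = \e^{ij}_j \cdot a_{ijl}$.

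For the norm bound, I would iterate the lift around the triangle $x_i \to x_j \to x_k \to x_i$ starting from $\e^{ij}_i$. Using $\G$-equivariance of the lift, namely $(\e \cdot g)_{x'} = \e_{x'} \cdot g$ (derived in the proof of Lemma \ref{fiberind}), together with the defining relations of the $a$'s, a direct computation yields $(((\e^{ij}_i)_{x_j})_{x_k})_{x_i} = \e^{ij}_i \cdot (a_{ijk}a_{jki}a_{kij})^{-1}$. On the other hand, the metric fibration axiom applied to $F_{x_jx_k}F_{x_ix_j}$ versus $F_{x_ix_k}$, followed by lifting back to $\pi^{-1}x_i$ (an isometry by Lemma \ref{liftfunct}) and using $(a_{x_k})_{x_i} = a$, gives $d_E\bigl(\e^{ij}_i \cdot (a_{ijk}a_{jki}a_{kij})^{-1},\, \e^{ij}_i\bigr) \leq d_X(x_i, x_j) + d_X(x_j, x_k) - d_X(x_i, x_k)$. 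Since the right $\G$-action is isometric, the left side equals the norm $|(a_{ijk}a_{jki}a_{kij})^{-1}|$, which by inverse invariance equals $|a_{ijk}a_{jki}a_{kij}|$. Running the same argument from $\e^{jk}_j$ and $\e^{ki}_k$ produces analogous bounds on the cyclic conjugates $a_{jki}a_{kij}a_{ijk}$ and $a_{kij}a_{ijk}a_{jki}$, which share their norm with $a_{ijk}a_{jki}a_{kij}$ by conjugation invariance; taking the minimum gives $|a_{ijk}a_{jki}a_{kij}| \leq |\Delta(x_i, x_j, x_k)|$.

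For the second statement, given two local sections $s = (\e)$ and $s' = (\tilde{\e})$, I would define $f_{ij} \in \G$ by $\e^{ij}_j \cdot f_{ij} = \tilde{\e}^{ij}_j$. The $\G$-equivariance of the lift gives $\e^{ij}_i \cdot f_{ij} = \tilde{\e}^{ij}_i$, and the symmetry shown above yields $f_{ij} = f_{ji}$. The morphism condition $a_{ijk}f_{jk} = f_{ij}b_{ijk}$ is then a short comparison of $\tilde{\e}^{jk}_j$ computed two ways: $\e^{ij}_j \cdot a_{ijk}f_{jk} = \e^{jk}_j \cdot f_{jk} = \tilde{\e}^{jk}_j = \tilde{\e}^{ij}_j \cdot b_{ijk} = \e^{ij}_j \cdot f_{ij}b_{ijk}$. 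The main obstacle I anticipate is the norm bound, which crucially requires both conjugation and inverse invariance of the norm (Proposition \ref{eroff}) to combine three separate triangle estimates, one per cyclic ordering of the vertices, into a single bound by the minimum $|\Delta(x_i, x_j, x_k)|$.
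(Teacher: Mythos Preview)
Your proposal is correct and follows essentially the same route as the paper: define $a_{ijk}$ via $\varepsilon^{ij}_j a_{ijk} = \varepsilon^{jk}_j$, verify the cocycle identity using the symmetry $\varepsilon^{ij}_j = \varepsilon^{ji}_j$, bound the norm of the holonomy via the triangle defect, and compare two local sections by an $f_{ij}$ acting fiberwise. The only cosmetic difference is that the paper tracks the holonomy as $\varepsilon^{ij}_j a_{ijk}a_{jki}a_{kij} = ((\varepsilon^{ij}_i)_{x_k})_{x_j}$ in $\pi^{-1}x_j$ and applies the triangle inequality directly, whereas you close the loop back to $\pi^{-1}x_i$ and invoke the metric-action inequality plus Lemma~\ref{liftfunct}; both yield the same three cyclic bounds and the same conclusion via conjugation invariance.
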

\begin{proof}
We define $a_{ijk} \in \G$ as the unique element such that $\varepsilon^{ij}_ja_{ijk} = \varepsilon^{jk}_j$. Then $(a_{ijk})$ satisfies that $a_{ijk}a_{kj\ell} = a_{ij\ell}$ since we have 
\[
\varepsilon^{ij}_ja_{ijk}a_{kj\ell} = \varepsilon^{jk}_ja_{kj\ell} = \varepsilon^{kj}_ja_{kj\ell} = \varepsilon^{j\ell}_j.
\]
Now note that we have $\varepsilon_xg = (\varepsilon g)_x$ for all $\varepsilon \in E, x \in X$ and $g \in \G$. Hence we have that 
\begin{align*}
\varepsilon^{ij}_ja_{ijk}a_{jki}a_{kij} &= \varepsilon^{jk}_ja_{jki}a_{kij} \\
&= (\varepsilon^{jk}_k)_{x_j}a_{jki}a_{kij} \\
&= (\varepsilon^{jk}_ka_{jki})_{x_j}a_{kij} \\
&= (\varepsilon^{ki}_k)_{x_j}a_{kij} \\
&= ((\varepsilon^{ki}_i)_{x_k}a_{kij})_{x_j} \\
&= ((\varepsilon^{ki}_ia_{kij})_{x_k})_{x_j} \\
&= ((\varepsilon^{ij}_i)_{x_k})_{x_j}.
\end{align*}
It follows that 
\begin{align*}
|a_{ijk}a_{jki}a_{kij}| &= d_E(\varepsilon^{ij}_j, \varepsilon^{ij}_ja_{ijk}a_{jki}a_{kij}) \\
&= d_E(\varepsilon^{ij}_j, ((\varepsilon^{ij}_i)_{x_k})_{x_j}) \\
&=  -d_E(\varepsilon^{ij}_j, \varepsilon^{ij}_i) + d_E(\varepsilon^{ij}_i, ((\varepsilon^{ij}_i)_{x_k})_{x_j}) \\
&\leq  -d_E(\varepsilon^{ij}_j, \varepsilon^{ij}_i) + d_E(\varepsilon^{ij}_i, (\varepsilon^{ij}_i)_{x_k}) + d_E((\varepsilon^{ij}_i)_{x_k}, ((\varepsilon^{ij}_i)_{x_k})_{x_j}) \\
&=  -d_X(x_j, x_i) + d_X(x_i, x_k) + d_X(x_k, x_j).
\end{align*}
Since the norm $|-|$ on $\G$ is conjugation invariant, the value $|a_{ijk}a_{jki}a_{kij}|$ is invariant under the cyclic permutation on $\{i, j, k\}$, hence we obtain that $|a_{ijk}a_{jki}a_{kij}| \leq |\Delta(x_i, x_j, x_k)|$. Thus we obtain a cocycle $\alpha_s \pi := (a_{ijk}) \in \Ob \Hc^1(X ; \G)$. Suppose that we have local sections $s = ((\varepsilon^{ij}_i,\varepsilon^{ij}_j))_{(i, j)\in I^2}$ and $s' = ((\mu^{ij}_i,\mu^{ij}_j))_{(i, j)\in I^2}$. Then there exists an element $(f_{ij}) \in \G^{I^2}$ such that $(\varepsilon^{ij}_if_{ij},\varepsilon^{ij}_jf_{ij}) = (\mu^{ij}_i,\mu^{ij}_j)$. Let $\alpha_s \pi = (a_{ijk})$ and $\alpha_{s'} \pi = (b_{ijk}) $. Then we obtain that 
\[
\varepsilon^{ij}_ja_{ijk}f_{jk}b^{-1}_{ijk} = \varepsilon^{jk}_jf_{jk}b^{-1}_{ijk} = \mu^{jk}_jb^{-1}_{ijk} = \mu^{ij}_j,
\]
which implies that $f_{ij} = a_{ijk}f_{jk}b^{-1}_{ijk}$. Hence $(f_{ij})$ defines a morphism $(f_{ij}) : (a_{ijk}) \too (b_{ijk})$ in $\Hc^1(X; \G)$. Since $\Hc^1(X; \G)$ is a groupoid, this is an isomorphism. 
\end{proof}
\begin{prop}\label{cohesssur}
The functor $\beta : \Hc^1(X; \G) \too  \Tor^{\G}_X$ is split essentially surjective.
\end{prop}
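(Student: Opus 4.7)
The definition of split essential surjectivity (Definition \ref{catconv}(2)) requires, for every $\G$-torsor $\pi : E \too X$, a cocycle $(a_{ijk}) \in \Ob \Hc^1(X; \G)$ together with an isomorphism $\beta(a_{ijk}) \cong \pi$ in $\Tor^\G_X$. My plan is therefore to manufacture a cocycle from $\pi$ via Proposition \ref{alpha}, and then to write down an explicit $\G$-equivariant isometry realising the isomorphism.

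For the cocycle, I would use the axiom of choice to pick a single element $\e_i \in \pi^{-1}x_i$ for each $i \in I$, and then set $\e^{ij}_i := \e_i$ and $\e^{ij}_j := (\e_i)_{x_j}$. The symmetry $\e^{ij}_i = \e^{ji}_i$ is then automatic, so this is a local section $s$ of $\pi$ in the sense defined before Proposition \ref{alpha}, and that proposition hands back the cocycle $\alpha_s\pi = (a_{ijk})$ characterised by $\e^{ij}_j a_{ijk} = \e^{jk}_j$.

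For the isomorphism I propose the map
\[
\phi : \beta(a_{ijk}) \too E, \qquad \phi[g^{ij}_\bullet] := \e^{ij}_\bullet g^{-1} \quad (\bullet \in \{i, j\}).
\]
Well-definedness on the generating relation $g^{ij}_j \sim (ga_{ijk})^{jk}_j$ reduces to the identity $\e^{jk}_j (ga_{ijk})^{-1} = \e^{ij}_j a_{ijk} a_{ijk}^{-1} g^{-1} = \e^{ij}_j g^{-1}$, which is immediate from the defining equation of $a_{ijk}$. $\G$-equivariance with respect to the action $[g^{ij}_\bullet]h = [(h^{-1}g)^{ij}_\bullet]$ is a one-line check, and $\phi$ obviously respects the projections to $X$.

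The only genuinely technical step — and where I expect the real work to lie — is showing that $\phi$ is distance preserving. I would decompose $d_E(\e^{ij}_i g^{-1}, \e^{ij}_j h^{-1})$ by identifying the lift of $x_j$ along $\e^{ij}_i g^{-1}$: using the compatibility $(\e h)_{x'} = \e_{x'} h$ established inside the proof of Lemma \ref{fiberind}, this lift is $\e^{ij}_j g^{-1}$, which splits the distance as $d_X(x_i, x_j) + d_E(\e^{ij}_j g^{-1}, \e^{ij}_j h^{-1})$. Applying Lemma \ref{metgrp}(2) inside the fiber, which is a metric group isomorphic to $\G$ by Lemma \ref{gact}, identifies the second term with $d_\G(g, h)$, matching the distance defined on $\beta(a_{ijk})$ in Proposition \ref{pasting}. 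Invertibility of $\phi$ is then automatic from the Remark after Definition \ref{tordef}, and the construction is complete.
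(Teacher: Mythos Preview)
Your proposal is correct and essentially identical to the paper's own proof: the paper also fixes a local section, forms $\alpha_s\pi = (a_{ijk})$ via Proposition \ref{alpha}, and defines the very same map $\varphi[g^{ij}_\bullet] = \varepsilon^{ij}_\bullet g^{-1}$, checking well-definedness, the isometry property, and $\G$-equivariance just as you outline. The only cosmetic difference is in the isometry step, where the paper first right-multiplies by $g$ to reduce to $d_E(\varepsilon^{ij}_i, \varepsilon^{ij}_j h^{-1}g)$ before splitting via the lift, whereas you identify the lift of $\varepsilon^{ij}_i g^{-1}$ directly; both routes land on $d_X(x_i,x_j) + d_\G(g,h)$.
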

\begin{proof}
Let $\pi : E \too X$ be a $\G$-torsor.  Fix a local section $s = ((\varepsilon^{ij}_i,\varepsilon^{ij}_j))_{(i, j)\in I^2}$ of $\pi$. Let $\alpha_s\pi = (a_{ijk})$ be the cocycle constructed in Proposition \ref{alpha}. We show that the $\G$-torsors $\beta(a_{ijk})$ and $\pi$ are isomorphic. We define a map $\varphi : \beta(a_{ijk}) \too E$  by $[g^{ij}_\bullet] \mapsto \varepsilon^{ij}_\bullet g^{-1}$. It is well defined since we have that
% \[
% [g^{ji}_j] \mapsto \varepsilon^{ji}_jg^{-1} = \varepsilon^{ij}_jg^{-1},
% \]
% and
\[
[(ga_{ijk})^{jk}_j] \mapsto \varepsilon^{jk}_ja^{-1}_{ijk}g^{-1} = \varepsilon^{ij}_jg^{-1}.
\]
It obviously preserves fibers and is a bijection. Also, it is an isometry since we have that
\begin{align*}
d_E(\varphi[g^{ij}_i], \varphi[h^{ij}_j]) &= d_E(\varepsilon^{ij}_ig^{-1}, \varepsilon^{ij}_jh^{-1}) \\
&= d_E(\varepsilon^{ij}_i, \varepsilon^{ij}_jh^{-1}g) \\
&= d_E(\varepsilon^{ij}_i, \varepsilon^{ij}_j) + d_E(\varepsilon^{ij}_j, \varepsilon^{ij}_jh^{-1}g) \\
&= d_X(x_i, x_j) + d_\G(g^{-1}, h^{-1}) \\
&= d_{\beta(a_{ijk})}([g^{ij}_i], [h^{ij}_j]).
\end{align*}
Further, it is immediately verified that $\varphi$ is $\G$-equivariant. Hence the map $\varphi$ gives an isomorphism in $\Tor^{\G}_X$. 
\end{proof}

\begin{cor}\label{betaeq}
The functor $\beta : \Hc^1(X; \G) \too  \Tor^{\G}_X$ is an  equivalence of categories. \qed
\end{cor}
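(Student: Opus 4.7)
The plan is to simply assemble the three preceding propositions. The functor $\beta : \Hc^1(X;\G) \too \Tor^\G_X$ has already been shown to be faithful in Proposition \ref{pasting}, full in the unnamed proposition that follows it, and split essentially surjective in Proposition \ref{cohesssur}. Together these three properties say exactly that $\beta$ is fully faithful and split essentially surjective.

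Hence I would invoke the standard lemma stated in the Conventions section (``If a functor $F : C \too D$ is fully faithful and split essentially surjective, then it is a category equivalence'') and conclude immediately. Concretely, the split essential surjectivity furnishes, for each $\G$-torsor $\pi : E \too X$, a chosen cocycle $\alpha_s\pi$ (constructed from a fixed local section $s$, as in Proposition \ref{alpha}) together with a chosen isomorphism $\beta(\alpha_s\pi) \cong \pi$ in $\Tor^\G_X$; fullness and faithfulness then let us transport morphisms of torsors to morphisms of cocycles in a way compatible with composition and identities, producing the required quasi-inverse functor $\Tor^\G_X \too \Hc^1(X;\G)$ and the natural isomorphisms $\beta \circ \alpha \cong \mathrm{id}$ and $\alpha \circ \beta \cong \mathrm{id}$.

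There is essentially no obstacle beyond bookkeeping, since the three ingredients are already in place. The only thing worth being slightly careful about is that $\Hc^1(X;\G)$ is defined relative to an arbitrary indexing $X = \{x_i\}_{i\in I}$, but this was already observed to be immaterial in Definition \ref{cech}, so the category equivalence is well-defined independently of that choice. This completes the proof.
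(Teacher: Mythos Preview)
Your proposal is correct and matches the paper's approach exactly: the paper states this as a corollary with no further proof, since it follows immediately from the three preceding propositions (faithful, full, split essentially surjective) together with the standard lemma from the Conventions section. Your additional remarks on the quasi-inverse and the indexing are accurate but unnecessary elaboration.
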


\end{document}